\newtheorem{theoremintro}{Theorem}
   \newtheorem{theorem}[subsubsection]{Theorem}
      \newtheorem*{theorem*}{Theorem}
   \newtheorem{lemma}[subsubsection]{Lemma}
   \newtheorem{corollary}[subsubsection]{Corollary}
   \newtheorem*{conjecture*}{Conjecture}
\theoremstyle{definition}
          \newtheorem*{exercise*}{Exercise}
   \newtheorem{example}[subsubsection]{Example}
   \newtheorem*{example*}{Example}
   \newtheorem*{definition*}{Definition}
   \newtheorem{remark}[subsubsection]{Remark}
\newcommand{\NN}{{\mathbb{N}}}
\newcommand{\ZZ}{{\mathbb{Z}}}
\newcommand{\GG}{{\mathbb{G}}}
\def\alp{\alpha}
\newcommand{\cC}{{\mathcal C}}
\newcommand{\tilcC}{{\widetilde{\mathcal C}}}
\newcommand{\tilcF}{{\widetilde{\mathcal F}}}
\newcommand{\tilcT}{{\widetilde{\mathcal T}}}
\newcommand{\cF}{{\mathcal F}}
\newcommand{\cM}{{\mathcal M}}
\newcommand{\cO}{{\mathcal O}}
\newcommand{\cT}{{\mathcal T}}
\newcommand{\cX}{{\mathcal X}}
\newcommand{\oM}{{\overline{M}}}
\newcommand{\ox}{{\overline{x}}}
\newcommand{\oz}{{\overline{z}}}
\newcommand{\veps}{{\varepsilon}}
\newcommand{\oveps}{{\overline{\veps}}}
\newcommand{\Fan}{{\rm Fan}}
\newcommand{\tX}{{\widetilde{X}}}
\def\<{\langle}
\def\>{\rangle}
\newcommand{\Spec}{\operatorname{Spec}}
\newcommand{\cs}{{\operatorname{cs}}}
\newcommand{\tcs}{{\operatorname{tcs}}}
\newcommand{\Ker}{{\operatorname{Ker}}}
\newcommand{\bfA}{{\mathbf A}}
\newcommand{\oX}{{\overline{X}}}
\newcommand{\oY}{{\overline{Y}}}
\newcommand{\oZ}{{\overline{Z}}}
\newcommand{\oU}{{\overline{U}}}
\newcommand{\oR}{{\overline{R}}}
\newcommand{\oW}{{\overline{W}}}
\def\longto{\dashrightarrow}
\def\toto{\rightrightarrows}
\def\:{{\colon}}
\def\.{{,\dots,}}
\newcommand{\double}{\genfrac..{0pt}1
{\raise -1pt\hbox{$\scriptstyle\longrightarrow$}}{\raise 3pt\hbox
{$\scriptstyle\longrightarrow$}}}
\renewcommand{\setminus}{\smallsetminus}
\def\sat{{\rm sat}}
\def\nor{{\rm nor}}
\def\fl{_{\rm fl}}
\def\et{_{\rm \acute et}}
\def\ket{_{\rm k\acute et}}
\def\tototi{\mathbin{\mathop{\otimes}\limits^{\raise-1pt\hbox
{$\scriptscriptstyle {\rm L}$}}}}
\def\indlim{\mathop{\vrule width0pt height7pt depth
4pt\smash{\lim\limits_{\raise 1pt\hbox to 14.5pt
{\rightarrowfill}}}}}
\def\projlim{\mathop{\vrule width0pt height7pt depth
4pt\smash{\lim\limits_{\raise 1pt\hbox to 14.5pt
{\leftarrowfill}}}}}
\newcommand\displaceamount{3pt}
\newcommand{\doubledown}{\ar@<\displaceamount>[d]\ar@<-\displaceamount>[d]}
\newcommand{\doubleup}{\ar@<\displaceamount>[u]\ar@<-\displaceamount>[u]}
\newcommand{\doubleright}{\ar@<\displaceamount>[r]\ar@<-\displaceamount>[r]}
\newcommand{\tor}{{\operatorname{tor}}}
\newcommand{\rk}{\operatorname{rk}}
\def\into{\hookrightarrow}
\def\onto{\twoheadrightarrow}
\def\gp{\text{gp}}
\def\tilX{{\widetilde X}}
\def\tilJ{{\widetilde J}}
\def\toisom{\xrightarrow{{_\sim}}}
\begin{document}
\title{Toroidal orbifolds, destackification, and Kummer blowings up}

\author[D. Abramovich]{Dan Abramovich}
\address{Department of Mathematics, Box 1917, Brown University,
Providence, RI, 02912, U.S.A}
\email{abrmovic@math.brown.edu}
\author[M. Temkin]{Michael Temkin}
\address{Einstein Institute of Mathematics\\
               The Hebrew University of Jerusalem\\
                Giv'at Ram, Jerusalem, 91904, Israel}
\email{temkin@math.huji.ac.il}

\author[J. W{\l}odarczyk] {Jaros{\l}aw W{\l}odarczyk}
\address{Department of Mathematics, Purdue University\\
150 N. University Street,\\ West Lafayette, IN 47907-2067}
\thanks{This research is supported by  BSF grant 2014365}

\date{\today}

\begin{abstract}
We show that any toroidal DM stack $X$ with finite diagonalizable inertia possesses a maximal toroidal coarsening $X_\tcs$ such that the morphism $X\to X_\tcs$ is logarithmically smooth. 

Further, we use torification results of \cite{AT1} to construct a destackification functor, a variant of the main result of \cite{Bergh}, on the category of such toroidal stacks $X$. Namely, we associate to $X$ a sequence of blowings up of toroidal stacks $\tilcF_X\:Y\longrightarrow X$ such that $Y_\tcs$ coincides with the usual coarse moduli space $Y_\cs$. In particular, this provides a toroidal resolution of the algebraic space $X_\cs$. 

Both $X_\tcs$ and $\tilcF_X$ are functorial with respect to strict inertia preserving morphisms $X'\to X$. 

Finally, we use  coarsening morphisms to introduce a class of non-representable birational modifications of toroidal stacks called Kummer blowings up. 

These modifications, as well as our version of destackification, are used in our  work on functorial toroidal resolution of singularities.
\end{abstract}
\maketitle
\setcounter{tocdepth}{1}

\tableofcontents

\section{Introduction}
We study the birational geometry of toroidal orbifolds, aiming towards applications in resolution of singularities and semistable reduction, as initiated in our  paper \cite{ATW-principalization}.

Starting from a toroidal Deligne--Mumford stack $X$ with diagonalizable inertia, we prove the following destackification result:

\begin{theoremintro}[{See Theorem \ref{destacksimpleth}}]
Let $\cC$ be the category of toroidal DM stacks with finite diagonalizable inertia acting trivially on the sharpened stalks $\oM_x$ of the logarithmic structure. Then to any object $X$ in $\cC$ one can associate a {\em destackifying blowing up} of toroidal stacks $\cF_X\:X'\to X$ along an ideal $I_X$ and a {\em coarse destackifying blowing up} $\cF^0_X\:X_0\to X_\cs$ along an ideal $J_X$ so that

(i) $X_0=(X')_\cs$ and $X_0$ inherits from $X'$ a logarithmic structure making it a toroidal scheme such that the morphism $X' \to X_0$ is logarithmically smooth.

(ii) The blowings up are compatible with any surjective inert morphism $f\:Y\to X$ from $\cC$, which is either strict or logarithmically smooth: $I_X\cO_Y=I_Y$, $J_X\cO_{Y_\cs}=J_Y$, $Y'=X'\times_XY$ and $Y'_0=X'_0\times_{X_\cs}Y_\cs$.
\end{theoremintro}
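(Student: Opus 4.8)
The plan is to reduce the theorem to a combinatorial destackification of a cone complex, with two inputs driving the reduction: the torification results of \cite{AT1}, used to bring the stack into a ``toric'' form, and the maximal toroidal coarsening $X_\tcs$ (constructed earlier in the paper), used to keep track of the coarse space $X_\cs$ throughout.

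\textbf{Step 1: local models and torification.} First I would exploit the hypothesis that the inertia is finite diagonalizable and acts trivially on the sharpened stalks $\oM_x$ to obtain a local presentation: étale-locally on $X_\cs$ the stack $X$ is a quotient $[U/G]$ of a toroidal scheme $U$ by a finite diagonalizable group $G$ preserving the logarithmic structure, where the triviality of the $G$-action on $\oM$ forces $G$ to act ``by units'' only. To such $(U,G)$ I would apply the $G$-equivariant torification of \cite{AT1}; by its functoriality the resulting blowings up glue to a canonical sequence of blowings up of toroidal stacks over $X$, after which, locally, $G$ acts on the torified chart by monomial (toric) automorphisms. Equivalently, after this modification the stack is presented by a ``stacky fan'': a cone complex together with the combinatorial data recording how the inertia acts through the torus.

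\textbf{Step 2: combinatorial destackification and descent to the coarse space.} Triviality of the action on $\oM_x$ translates, on the stacky fan, into the group data being of the special form that can be removed by honest star subdivisions alone, with no root-stack operations needed --- this is precisely where the argument departs from \cite{Bergh}. I would then run the canonical combinatorial algorithm: a sequence of star subdivisions, realized as blowings up along toroidal ideals, until the stacky fan becomes ``smooth''. At that stage the coarse space $X_0$ of the final stack $X'$ is a toroidal scheme, $X'\to X_0$ is logarithmically smooth, and the universal property of $(-)_\tcs$ identifies $X_0=(X')_\tcs=(X')_\cs$, which gives (i). Composing all the blowings up --- of the torification of Step~1 and of the combinatorial step --- produces the single center $I_X$ on $X$; being canonical, $I_X$ is $G$-invariant in each chart $[U/G]$, hence descends to an ideal $J_X$ on $X_\cs$, and the compatibility of the quotient-by-$G$ construction with blowing up shows that the blowing up $X_0$ of $X_\cs$ along $J_X$ indeed equals $(X')_\cs$, which is $\cF^0_X$.

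\textbf{Step 3: functoriality, and the main obstacle.} Given a surjective inert morphism $f\:Y\to X$ in $\cC$ that is strict or logarithmically smooth, I would check that the combinatorial data attached to $Y$ is the pullback along $f$ of that attached to $X$: strictness gives this at once on the logarithmic side, while for log smooth $f$ the cone complex of $Y$ maps to that of $X$ by a morphism of fans that is bijective on each cone, and inertness guarantees that the inertia data matches; surjectivity ensures nothing is lost. Since both the torification of \cite{AT1} and the combinatorial algorithm are canonical for such morphisms, $I_X\cO_Y=I_Y$ and $J_X\cO_{Y_\cs}=J_Y$ follow, and then $Y'=X'\times_X Y$ and $Y'_0=X'_0\times_{X_\cs}Y_\cs$ because blowing up commutes with the (flat, resp. monomial) base changes induced by $f$ and $f_\cs$. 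The hard part, I expect, is not any single step but threading the torification of \cite{AT1} and the combinatorial step together while staying inside $\cC$ --- each blowing up must preserve the hypothesis that the inertia acts trivially on $\oM_x$, and $X_\tcs$ must be well-behaved at every stage --- and, simultaneously, keeping the blowing up on $X$ and the one on $X_\cs$ rigidly linked (so that $I_X$ genuinely descends to $J_X$ with $X_0=(X')_\cs$) while functoriality is maintained at once for the two rather different classes of test morphisms; the interaction of the coarse-space bookkeeping with functoriality for logarithmically smooth morphisms is where most of the effort will go.
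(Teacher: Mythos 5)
Your Step 1 is essentially the paper's argument: present $X$ \'etale-locally over $X_\cs$ as $[W/G]$ (via Theorem~\ref{coarseth}), apply the equivariant torification of \cite{AT1}, and glue using the functoriality of the torific ideal for strongly $\lambda$-equivariant morphisms (Theorem~\ref{lambdath}, Corollary~\ref{nostrictcor}) together with inert \'etale descent. Up to that point you are on track.

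The problem is your Step 2. The theorem does not ask you to make a ``stacky fan smooth'' or to remove the stack structure; it asks for $(X')_\tcs=(X')_\cs$, i.e.\ that the coarse space of $X'$ be toroidal with $X'\to(X')_\cs$ logarithmically smooth. This is already achieved by the single torific blowing up: once the $G$-action on the torified chart $W_1$ is toroidal, one has $G_w=G_w^\tor$ for every $w$, hence $G_x=G_x^\tor$ for every geometric point of $X_1=[W_1/G]$, and Theorem~\ref{modulith} gives $(X_1)_\tcs=(X_1)_\cs$ — equivalently, $W_1/G$ is toroidal and $W_1\to W_1/G$ is Kummer logarithmically \'etale (Lemma~\ref{quotlem}, \cite[Theorem~3.3.12]{AT1}). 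No combinatorial subdivision step is needed, and appending one would destroy the assertion that $\cF_X$ is a \emph{single} blowing up along a \emph{single} ideal $I_X$ (the torific ideal), which is what makes the functoriality statements in (ii) clean. Moreover, your claim that triviality of the action on $\oM_x$ lets star subdivisions alone kill the inertia is false: for $[\AA^1/\mu_2]$ with log structure at the origin and $\mu_2$ acting by $\pm1$, the action is already toroidal, no subdivision changes anything, and the stabilizer at the origin survives — which is fine for the theorem precisely because its conclusion is only log smoothness of $X'\to X_0$, not triviality of inertia.

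A second gap: you obtain $J_X$ by ``descending'' $I_X$ to $X_\cs$ and invoke a general ``compatibility of quotient with blowing up.'' There is no such general compatibility — the quotient of a blowing up by $G$ need not be the blowing up of the quotient along any naturally induced ideal. The paper instead relies on the specific results \cite[Theorems~4.6.3 and 5.4.2]{AT1}, which assert that for the torific ideal in the simple case the quotient morphism $\cT^0_{W,G}\:W'\sslash G\to W\sslash G$ carries a natural structure of a blowing up; $J_X$ and $\cF^0_X$ are defined from that datum and then glued by descent, not deduced from $I_X$ by a formal argument. Similarly, functoriality for logarithmically smooth (non-strict) inert morphisms is not a soft statement about fans being ``bijective on each cone''; it requires Theorem~\ref{nostrictth}, i.e.\ that such morphisms preserve the reduced signatures $\sigma_x$ entering the construction of the torific ideal.
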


In addition, we remove the assumption on the triviality of the inertia action in Theorem~\ref{destackth}. In this case, destackification is achieved by a sequence of blowings up, which is only compatible with strict inert morphisms.

The theorem above is a variant of the main result of \cite{Bergh}. It is tuned for different purposes and uses different methods. First, we restrict to diagonalizable inertia, Theorem \ref{destacksimpleth} generalized the main result of \cite{Bergh} in two directions: we allow arbitrary toroidal singularities, and we do not restrict to stacks of finite type over a field. Our method is also different from Bergh's, in that we use the \emph{torific ideal} of \cite{AT1} which produces the destackification result in one step. Unlike Bergh's result we do not describe the destackification in terms of a sequence of well-controlled operations such as blowings up and root stacks, in particular, applications to factorization of birational maps must use \cite{Bergh} rather than our theorems.

Our study of destackification requires understanding the degree to which one may remove stack structure while keeping logarithmic smoothness. For this purpose we introduce and study coarsening morphisms of Deligne--Mumford stacks in general in Section \ref{Sec:coarsening}, and then specialize in Section \ref{Sec:toroidal} to toroidal stacks, where we associate to a toroidal Deligne--Mumford stack $X$ its \emph{total toroidal coarsening} $X_\tcs$ and prove

\begin{theoremintro}[{See Theorem \ref{modulith}}]
Let $\cC$ be the 2-category of toroidal DM stack with finite diagonalizable inertia and let $X$ be an object of $\cC$. Then,

(i) The total toroidal coarsening $X \to X_\tcs$ exists.

(ii) For any geometric point $x\to X$, we have $(I_{X/X_\tcs})_x = G_x^\tor$, where $(I_{X/X_\tcs})_x$ is the relative stabilizer and $G_x^\tor\subset G_x$ the maximal subgroup of inertia acting toroidally.

(iii) Any logarithmically flat morphism $h\:Y\to X$ in $\cC$ induces a morphism $h_\tcs\:Y_\tcs\to X_\tcs$ with a 2-commutative diagram
$$
\xymatrix{
Y\ar[d]_h\ar[r]^{\phi_Y} & Y_\tcs\ar[d]^{h_\tcs} \\
X\ar[r]_{\phi_X}\ar@{=>}[ur]^\alpha & X_\tcs
}
$$
and the pair $(h_\tcs,\alpha)$ is unique in the 2-categorical sense.

(iv) Assume that $h$ is logarithmically flat and inert. Then the diagram in (iii) is 2-cartesian.
\end{theoremintro}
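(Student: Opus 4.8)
The plan is to construct $X_\tcs$ from the \'etale-local structure of a toroidal DM stack with diagonalizable inertia and then deduce the four assertions from the theory of coarsening morphisms developed in Section~\ref{Sec:coarsening}. After a strict \'etale base change, $X$ is isomorphic to a quotient stack $[\Spec A/G]$, where $G$ is a finite diagonalizable group acting on an affine scheme $\Spec A$ which is log smooth over $\Spec\ZZ[P]$ for a fine saturated sharp monoid $P$, carries the pulled-back toric log structure, and on which the $G$-action is compatible with the log structure. At a geometric point $x$ the subgroup $G_x^\tor\subseteq G_x$ of elements acting toroidally is, in this presentation, the subgroup of automorphisms that agree with translation by a point of the torus $T_P=\Spec\ZZ[P^\gp]$. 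The first step is to show that $G_x^\tor$ does not depend on the presentation and that, as $x$ varies, the $G_x^\tor$ are the fibres of a closed subgroup scheme $I_X^\tor\subseteq I_X$ which is again finite diagonalizable --- hence flat --- over $X$; the diagonalizability of $I_X$ is precisely what makes ``acting toroidally'' carve out a flat subgroup. Since $I_X$ is abelian, $I_X^\tor$ is normal, so the coarsening construction of Section~\ref{Sec:coarsening} applied to $I_X^\tor\subseteq I_X$ produces $\phi_X\:X\to X_\tcs$ whose relative stabilizer is exactly $I_X^\tor$; this is (ii). That $X_\tcs$ is toroidal and $\phi_X$ is logarithmically smooth is then a local matter: in the presentation above, $\phi_X$ is the quotient by the residual group $G/G^\tor$ of the coarsening morphism $[\Spec A/G^\tor]\to(\Spec A)/G^\tor$; quotienting by a diagonalizable subgroup of the torus keeps the target toric (hence toroidal), that coarsening morphism is log smooth because $G^\tor$ acts through toric characters, and forming the quotient by the log-compatible $G/G^\tor$-action preserves log smoothness. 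This gives (i); the maximality implicit in the word ``total'' --- that any coarsening $X\to X'$ with $X\to X'$ log smooth satisfies $I_{X/X'}\subseteq I_X^\tor$ --- is a similar toric computation.

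For part (iii) the essential local input is that a logarithmically flat morphism in $\cC$ carries toroidal inertia to toroidal inertia: for $h\:Y\to X$ logarithmically flat, the canonical map $I_Y\to h^*I_X$ sends $I_Y^\tor$ into $h^*I_X^\tor$. Granting this, every toroidal automorphism of $Y$ maps into $I_X^\tor=I_{X/X_\tcs}$, hence the composite $\phi_X\circ h\:Y\to X_\tcs$ kills $I_Y^\tor$ and therefore factors, by the universal property of $\phi_Y\:Y\to Y_\tcs$ as the $2$-initial object among morphisms out of $Y$ that kill $I_Y^\tor$, through a morphism $h_\tcs\:Y_\tcs\to X_\tcs$ equipped with a $2$-isomorphism $\alpha$; the same universal property yields that $(h_\tcs,\alpha)$ is unique up to a unique $2$-isomorphism. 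The local input itself is verified on compatible presentations: logarithmic flatness forces the presentation of $Y$ to be, up to strict smooth maps, a base change of that of $X$ along a Kummer-type morphism of monoids, and under such a base change an automorphism acts as a torus translation downstairs precisely when a compatible one acts as a torus translation upstairs.

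For part (iv), if $h$ is moreover inert then $I_Y\to h^*I_X$ is an isomorphism, and the local description of the toroidal inertia promotes the inclusion of (iii) to the equality $I_Y^\tor=h^*I_X^\tor$. Coarsening is compatible with base change (Section~\ref{Sec:coarsening}): $\phi_X$ presents $X$ as an $I_X^\tor$-gerbe over $X_\tcs$, so its base change along $h_\tcs\:Y_\tcs\to X_\tcs$ is an $h^*I_X^\tor$-gerbe over $Y_\tcs$, while $\phi_Y$ presents $Y$ as an $I_Y^\tor$-gerbe over $Y_\tcs$ and $I_Y^\tor=h^*I_X^\tor$; the tautological morphism $Y\to X\times_{X_\tcs}Y_\tcs$ is compatible with these two gerbe structures over $Y_\tcs$ and is therefore an isomorphism, which is precisely the $2$-cartesianness of the square in (iii).

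The principal obstacle is the control of the toroidal part $I_X^\tor$ of the inertia: first, that it is a genuine flat closed subgroup scheme of $I_X$ rather than only a pointwise recipe, and second --- the real difficulty --- that it is compatible with arbitrary logarithmically flat morphisms, in particular non-strict ones such as Kummer covers, under which the ambient toric and logarithmic structures genuinely change. Once this is established, assertions (i)--(iv) follow formally from the coarsening formalism of Section~\ref{Sec:coarsening} together with its base-change properties.
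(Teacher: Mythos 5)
Your outline has the right skeleton---local quotient presentations, pointwise toroidal stabilizers $G_x^\tor$, and the universal property of coarsenings to get (iii)---but two of its load-bearing claims are false or unproven. First, $I_X^\tor$ is \emph{not} flat over $X$: even $I_X$ itself fails to be flat for a DM stack with nontrivial generic-to-special stabilizer jumps, and $I_X^\tor=\cup_{\veps\in\Fan(X)}G_\veps^\tor\times\oveps$ jumps along the logarithmic strata (take $X=\Spec k[x,y]$ with toroidal divisor $(x)$ and $\mu_2$ acting by sign on $x$: the toroidal stabilizer is $\mu_2$ on $V(x)$ and trivial off it). Consequently $\phi_X$ is \emph{not} an $I_X^\tor$-gerbe --- a coarsening morphism is an isomorphism over the locus of trivial relative stabilizer and is nothing like a gerbe there --- so the entire argument you give for (iv), which compares two gerbe structures over $Y_\tcs$, collapses. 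The correct route is: use the inertia computation of (ii) together with $G_y^\tor=\lambda_y^{-1}(G_x^\tor)$ to see that $h_\tcs$ is inert, base change along a scheme cover $Z\to Y_\tcs$ so that both legs become total coarse moduli maps onto algebraic spaces, and invoke compatibility of coarse spaces of DM stacks with arbitrary base change. Note also that the equality $G_y^\tor=\lambda_y^{-1}(G_x^\tor)$ for a non-strict logarithmically flat inert morphism is only known under a simplicity hypothesis on $Y$ (the action of $G_y$ on $\oM_y$ must be trivial); the paper's Theorem \ref{modulith}(iv) carries this hypothesis and the paper explicitly records that it does not know whether it can be dropped, so your unconditional (iv) overreaches.

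Second, the existence in (i)--(ii) is not actually established. You write that ``the coarsening construction of Section \ref{Sec:coarsening} applied to $I_X^\tor\subseteq I_X$ produces $\phi_X$,'' but that section only proves \emph{uniqueness} of a coarsening with prescribed geometric relative stabilizers (Corollary \ref{coarsecor}) and its universal property; it contains no machine that inputs a normal subgroup of inertia and outputs a coarsening, and the standard rigidification construction you are implicitly leaning on requires exactly the flatness that fails here. What is needed --- and what you yourself flag as ``the principal obstacle'' without resolving --- is: (a) a local coherence statement (Lemma \ref{Gtorlem}) guaranteeing that near $x$ a single subgroup $G^\tor=G_x^\tor$ of $G$ satisfies $G^\tor\cap G_w=G_w^\tor$ for all nearby $w$, so that $[(W/G^\tor)/(G/G^\tor)]$ makes sense and has the right stabilizers; (b) the local structure theorem (Theorem \ref{coarseth}) producing the presentations $[W/G_x]$ over an \'etale cover of $X_\cs$; and (c) a gluing argument via inert \'etale groupoids (Lemma \ref{inertquot}), where the uniqueness of Corollary \ref{coarsecor} is what forces the locally constructed coarsenings to agree on overlaps. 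The characterization of toroidality of the resulting coarsening via $(I_{X/T})_x\subseteq G_x^\tor$ (reducing to the quotient criterion of Lemma \ref{quotlem}) is also needed to finish (i). Your part (iii) is essentially correct in outline and matches the paper: functoriality of toroidal stabilizers under logarithmically flat maps plus the universal property of coarsening morphisms.
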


Apart from destackification, this theorem figures in our study of a collection of non-representable birational modifications which is essential in our work \cite{ATW-principalization} on resolution of singularities. We define in Section \ref{Sec:permissible-center} the notion of a \emph{permissible Kummer center $I$} on a toroidal scheme, and in Section \ref{Sec:Kummer-blowup} we define its blowing up $[Bl_I(X)]\to X$, which is in general  an algebraic stack. Its key property is the following:

\begin{theoremintro}[{See Theorem \ref{Th:permissible-Kummer}}]
Let $X$ be a toroidal scheme and let $I$ be a permissible Kummer ideal with the associated Kummer blowing up $f\:[Bl_I(X)]\to X$. Then

(i) (Principalization property) $f^{-1}(I)$ is an invertible Kummer ideal.

(ii) (Exceptional behavior) If $I$ is not monomial at $x\in X$ then $f^{-1}(I)$ is an invertible \emph{ideal} along $f^{-1}(x)$. If $I$ is monomial at $x$, say $I=(m_1^{1/d}\.m_r^{1/d})$ then $[Bl_I(X)]=Bl_J(X)$, where $J=(m_1\.m_r)$.

(iii) (Universal property) If $I$ is monomial then $f$ is the universal morphism of toroidal orbifolds $h\:Z\to X$ such that $h^{-1}(I)$ is an invertible \emph{Kummer} ideal.  If $I$ is not monomial at any $x\in X$ then  $f$ is the universal morphism of toroidal orbifolds $h\:Z\to X$ such that $h^{-1}(I)$ is an invertible \emph{ideal}.
\end{theoremintro}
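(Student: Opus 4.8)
The plan is to reduce all three statements to the behavior of an \emph{ordinary} blowing up after passing to a Kummer cover, and then to descend along that cover. Recall from the constructions in Sections~\ref{Sec:permissible-center} and~\ref{Sec:Kummer-blowup} that, working strict-étale locally on $X$ and choosing a toroidal chart, a permissible Kummer ideal $I$ admits a Kummer cover $\pi\:\tilde X\to X$ with finite diagonalizable Galois group $G$ such that $\tilde I:=I\cO_{\tilde X}$ is an honest coherent ideal, and that $[Bl_I(X)]$ is by construction the quotient stack $[Bl_{\tilde I}(\tilde X)/G]$, independently of the chart and of the cover. Each of the properties in question --- being an invertible (Kummer) ideal, being monomial at a point, and the universal property --- is strict-étale local and translates, under $\pi^*$ and $G$-equivariant descent, into the corresponding property of $\tilde I$ on $\tilde X$; it therefore suffices to prove the analogues upstairs, keep track of the $G$-action, and record which standard affine charts of $Bl_{\tilde I}(\tilde X)$ lie over a given point $x$. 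Granting this, (i) is immediate: the universal property of the ordinary blowing up makes $\tilde I\cO_{Bl_{\tilde I}(\tilde X)}$ invertible, and since $\tilde I$ is $G$-stable this invertible ideal is $G$-equivariant and descends to an invertible Kummer ideal on $[Bl_I(X)]$, namely $f^{-1}(I)$.

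For (ii), write $I$ near $x$ as generated by fractional monomials $m_1^{1/d},\dots,m_r^{1/d}$ together with a regular part $(g_1,\dots)$; on the cover, $\tilde I=(\mu_1,\dots,\mu_r,\tilde g_1,\dots)$ with $\mu_i=\pi^*m_i^{1/d}$ a monomial on $\tilde X$ and each $\tilde g_j=\pi^*g_j$ pulled back from $X$, hence $G$-invariant. On the standard chart of $Bl_{\tilde I}(\tilde X)$ attached to a $G$-invariant generator $\tilde g_j$, the ideal $\tilde I$ becomes the principal ideal $(g_j)$; this chart is $G$-stable and descends to the honest invertible ideal $(g_j)$ on $[Bl_I(X)]$. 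The content of permissibility (Section~\ref{Sec:permissible-center}) that I will use is precisely that over a point $x$ at which $I$ is \emph{not} monomial, only charts of this type meet $f^{-1}(x)$, which yields the first assertion of (ii). If instead $I$ is monomial throughout a neighborhood of $x$, there are no regular generators, $\tilde I=(\mu_1,\dots,\mu_r)$, and one identifies $[Bl_I(X)]=[Bl_{(\mu_1,\dots,\mu_r)}(\tilde X)/G]$ with $Bl_J(X)$, $J=(m_1,\dots,m_r)$, by comparing the two Rees algebras $G$-equivariantly, equivalently the induced subdivisions of the Kato fan of the chart; in particular $[Bl_I(X)]$ is representable here, proving the second assertion.

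It remains to prove the universal property (iii); by (i) and (ii) the morphism $f$ itself is a toroidal orbifold over $X$ of the required kind, so the point is initiality. Let $h\:Z\to X$ be a toroidal orbifold with $h^{-1}(I)$ an invertible Kummer ideal (resp.\ an invertible ideal, in the nowhere-monomial case). Since ``toroidal orbifold over $X$'' and ``invertible Kummer ideal'' are Kummer-étale-local notions, the assignment sending $Z$ to the groupoid of such $h$ is a stack for the Kummer-étale topology, so it suffices to produce the factorization Kummer-étale-locally on $Z$ and check that the local pieces glue: choose a Kummer cover $Z'\to Z$ over which $h^{-1}(I)$ becomes an honest invertible ideal, enlarge $\pi$ to a common cover dominating both, obtain $\tilde h\:Z'\to\tilde X$ with $\tilde h^{-1}(\tilde I)$ invertible, and invoke the universal property of the ordinary blowing up to get a unique factorization $Z'\to Bl_{\tilde I}(\tilde X)$ over $\tilde X$. $G$-equivariance together with this uniqueness makes the factorization descend to a morphism $Z\to[Bl_I(X)]$ over $X$, unique in the $2$-categorical sense. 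In the nowhere-monomial case one additionally uses the chart analysis of (ii) to see that, because $h^{-1}(I)$ is assumed to be an \emph{honest} invertible ideal, the factorization lands in the union of the regular-generator charts, i.e.\ genuinely in $[Bl_I(X)]$ and not in a Kummer enlargement.

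The step I expect to be the main obstacle is exactly this last descent: carrying it out rigorously with \emph{both} the source $Z$ and the target $[Bl_I(X)]$ being stacks --- that is, assembling the Kummer covers of $X$ and of $Z$ together with their group actions into an honest $2$-descent datum and controlling the automorphisms of the factorization. This is where the hypothesis that $Z$ is a toroidal \emph{orbifold}, so that its Kummer covers are étale-locally schemes and the relevant $\underisom$-sheaves are representable and well behaved, is essential, and where uniqueness in the ordinary universal property must be leveraged with care. A smaller but genuine technical point is the functorial identification $[Bl_I(X)]\cong Bl_J(X)$ in the monomial case, which has to be set up compatibly with the strict-étale-local reductions used throughout.
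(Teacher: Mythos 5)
Your argument is built on an identification that the paper explicitly rules out: you take $[Bl_I(X)]$ to be the quotient stack $[Bl_{I_Y}(Y)/G]$ for a Kummer cover $Y\to X$ trivializing $I$, and assert that this is independent of the cover. It is not: if $Z\to Y\to X$ is a finer cover with group $H$ and $K=\Ker(H\to G)$, the stack $[Bl_{I_Z}(Z)/H]$ acquires extra stabilizers from $K$ acting on the exceptional locus, so the quotient stack genuinely depends on the choice of $Y$ (the paper says this in so many words in the discussion preceding Lemma~\ref{Qblowlem0}). This is precisely why $[Bl_I(X)]$ is \emph{defined} as the relative coarse moduli space $[Y'/G]_{\cs/B\GG_m}$ with respect to the morphism to $B\GG_m$ classifying the exceptional divisor, and why Lemma~\ref{Qblowlem0} is needed to show that \emph{this} object is cover-independent. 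Working with the wrong object propagates through your proof: in (i) one must still check that the invertible exceptional ideal descends through the coarsening $[Y'/G]\to[Y'/G]_{\cs/B\GG_m}$ (this is exactly what the $B\GG_m$-relative coarsening is engineered to guarantee); and your claim in (ii) that $[Bl_I(X)]$ is representable in the monomial case conflicts with Example~\ref{orbifoldexam}(i), where $[Bl_{(\pi^{1/d})}(\Spec(k[\pi]))]=[\Spec(k[\pi^{1/d}])/\mu_d]$ retains a $\mu_d$-stabilizer at the origin --- your $G$-equivariant Rees algebra comparison computes the \emph{coarse} Kummer blowing up $Bl_I(X)$ (Lemma~\ref{coarseblow}), not the stack.

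The descent step you flag as ``the main obstacle'' is real, and the paper's proof of Theorem~\ref{Th:permissible-Kummer} resolves it by two devices absent from your proposal. First, rather than choosing a Kummer cover of $Z$ and a common refinement, it takes the canonical fs fiber product $T=Z\times_XY$, which is automatically a Kummer cover of $Z$ with the \emph{same} Galois group $G$; comparing the two factorizations $T\to Z\to X$ and $T\to Y\to X$ shows that $I_Y\cO_T$ is an honest invertible ideal, so $T\to Y'=Bl_{I_Y}(Y)$ exists and is unique by the ordinary universal property. Second, the passage from $[T/G]\to[Y'/G]$ down to a morphism $Z\to X'$ is not carried out by assembling a 2-descent datum: one notes that the exceptional divisors induce compatible morphisms to $B\GG_m$, applies the universal property of coarsening morphisms (Theorem~\ref{univth}) to the relative coarse spaces, and uses that $[T/G]_{\cs/B\GG_m}=Z$ precisely because $h^{-1}(I)$ is already invertible on $Z$. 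This mechanism --- the $B\GG_m$-relative coarsening and its universal property --- is the missing ingredient in your argument; without it the factorization neither lands in the correct target nor comes with the required uniqueness.
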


\section{Coarsening morphisms and inertia}

\subsection{Inertia stack}

\subsubsection{Basic properties of inertia}\label{Sec:basic-inertia}
Recall that the inertia stack $I_{X/Y}$ of a morphism $f\:X\to Y$ of stacks is the second diagonal stack $I_{X/Y}=X\times_{\Delta_{X/Y}}X$, where $\Delta_{X/Y}=X\times_YX$. It is a representable group object over $X$.

The absolute inertia stack of $X$ is $I_X=I_{X/\ZZ}$. Recall that by \cite[Tag:04Z6]{stacks}
\begin{equation}\label{inertiaeq1}
I_{X/Y}=I_X\times_{I_Y}X.
\end{equation}
In other words, $I_{X/Y}=\Ker(I_X\to f^*(I_Y))$, where $f^*(I_Y)=I_Y\times_YX$.

In fact, the inertia stack is a group functor in the following sense: given a morphism $f\:X\to Y$ a natural morphism $I_f\:I_X\to I_Y$ arises, and the induced morphism $I_X\to f^*(I_Y)$ is a homomorphism. In addition, the inertia functor is defined as a 2-limit and hence it respects 2-limits, including fiber products. So, given $T=X\times_ZY$ with projections $f\:T\to X$, $g\:T\to Y$ and $h\:T\to Z$, one has that
\begin{equation}\label{inertiaeq2}
I_{X\times_ZY}=I_X\times_{I_Z}I_Y=f^*(I_X)\times_{h^*(I_Z)}g^*(I_Y).
\end{equation}
Similar facts hold for relative inertia over a fixed stack $S$.

\subsubsection{Inert morphisms}
We say that a morphism $f\:X\to Y$ is {\em inert} or {\em inertia-preserving} if it respects the inertia in the sense that $I_X=f^*(I_Y)$. In particular, $I_{X/Y}=X$ and hence $f$ is representable (see \cite[Tag:04SZ]{stacks} for the absolute case, the relative case follows easily). Inert morphisms are preserved by base changes. Finally, inert morphisms have no non-trivial automorphisms.

\subsubsection{Inert groupoids}
In general, one runs into 2-categorical issues when trying to define groupoids in stacks or their quotients. The situation drastically improves if one considers inert morphisms. By a {\em an inert groupoid} in stacks we mean a usual datum $(p_{1,2}\:X_1\toto X_0,m,i,\delta)$ as in \cite[\S35.11, (Tag:0231)]{stacks}, where $X_i$ are stacks and all morphisms are inert. We crucially use that inert morphisms have no non-trivial automorphisms, and hence we can formulate the groupoid conditions using equalities of 1-morphisms without any 2-morphisms showing up. Similar consideration are pursued in \cite{Harper}.

\begin{lemma}\label{inertquot}
Assume that $p_{1,2}\:X_1\toto X_0$ is a smooth inert groupoid in Artin stacks. Then there exists an inert morphism of stacks $q\:X_0\to X$ with a 2-isomorphism $p_1\circ q=p_2\circ q$ such that $X_1=X_0\times_XX_0$. Moreover, $X$ is the quotient $[X_0/X_1]$ in the sense that any morphism $f\:X_0\to Y$ with a 2-isomorphism $f\circ p_1=f\circ p_2$ are induced by $q$ from a morphism $X\to Y$, which is unique up to a unique 2-isomorphism.
\end{lemma}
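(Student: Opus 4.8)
The plan is to construct $X$ by descent: since all structure morphisms in the groupoid are inert, hence representable, the datum $(X_1 \toto X_0, m, i, \delta)$ is genuinely a groupoid object in the $1$-category sense (no $2$-morphisms intervene, as emphasized in the paragraph on inert groupoids), and one may form the stack-theoretic quotient $X := [X_0/X_1]$ exactly as one forms a quotient stack from a groupoid in algebraic spaces. First I would recall the general formalism: from a smooth groupoid $X_1\toto X_0$ in Artin stacks, the quotient prestack sending a scheme $T$ to the groupoid of $X_1$-torsors over $T$ equipped with a $X_1$-equivariant map to $X_0$ is a stack for the smooth (equivalently fppf) topology; smoothness of $p_1,p_2$ guarantees it is an Artin stack and that $q\: X_0 \to X$ is smooth and surjective. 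This is standard (\cite[Tag:04T3]{stacks} and the surrounding discussion), applied with ``algebraic space'' replaced by ``Artin stack'' throughout — the proofs there are insensitive to that replacement since they only use representability and smoothness of the groupoid maps.

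Next I would verify $X_1 = X_0\times_X X_0$. By construction $q\circ p_1$ and $q\circ p_2$ come with a canonical $2$-isomorphism, so there is an induced morphism $X_1 \to X_0\times_X X_0$; the groupoid axioms (composition $m$, inverse $i$, identity $\delta$) produce an inverse morphism $X_0\times_X X_0 \to X_1$, and one checks these are mutually inverse using the groupoid identities — again a purely $1$-categorical computation, valid here precisely because inert morphisms have no nontrivial automorphisms, so all the needed diagrams commute on the nose. Then I would check that $q$ is \emph{inert}: we must show $I_{X_0} = q^*(I_X)$. Since $q$ is smooth and surjective, hence an fppf cover, and $I_{X_0/X} = X_1$ as a group object over $X_0$ via the two projections, it suffices to observe that $I_{X_0/X}$ being trivial is equivalent to $q$ being inert — but $I_{X_0/X} = X_0 \times_{\Delta_{X_0/X}} X_0$, and $\Delta_{X_0/X} = X_0\times_X X_0 = X_1$, so $I_{X_0/X}$ is the fiber product of $X_1 \toto X_0$ along itself, which by the groupoid structure (the source-target map to $X_0\times X_0$ and the identity section) need not be trivial in general. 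So the correct statement is rather that the relative inertia $I_{q} = I_{X_0/X}$ is obtained from the groupoid, and inertness of $q$ follows because each $p_i$ is inert: $I_{X_0} = p_i^*(I_{X_1})$, and one descends this identification along $q$. I would organize this as: inertness of $q$ is an fppf-local statement on $X$, and fppf-locally (after pulling back along $q$ itself) it reduces to the inertness of $p_1$, which is a hypothesis.

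Finally, the universal property: given $f\: X_0\to Y$ with a $2$-isomorphism $f\circ p_1 \cong f\circ p_2$ compatible with $m$ (a cocycle condition), descent along the fppf cover $q$ produces a morphism $\bar f\: X \to Y$ with $\bar f\circ q \cong f$, unique up to unique $2$-isomorphism; here one uses that $Y$ is a stack and that $X_1 = X_0\times_X X_0$, together with the higher fiber products $X_2 = X_1\times_{p_1, X_0, p_2} X_1 = X_0\times_X X_0\times_X X_0$ identifying the cocycle condition with descent data. The main obstacle — and the reason the lemma is being stated at all rather than cited — is the $2$-categorical bookkeeping: a priori ``groupoid in Artin stacks'' carries associativity and unit $2$-morphisms, and descent along $q$ for a $2$-stack-valued functor is delicate. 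The hypothesis that all maps are inert is exactly what collapses this to ordinary $1$-categorical fppf descent of $1$-morphisms, and the proof should make that reduction explicit and then invoke the classical quotient-stack construction verbatim. I expect the writing burden to be concentrated in justifying that the classical construction applies with ``Artin stack'' in place of ``algebraic space,'' and in the clean statement of the cocycle/descent-datum dictionary.
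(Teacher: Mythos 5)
Your overall shape is right — the observation that inertness kills all $2$-morphisms so the groupoid datum is honestly $1$-categorical, the verification of $X_1=X_0\times_XX_0$ from the groupoid axioms, and the universal property via descent along the cover $q$ are all in the spirit of what is needed. But the construction of $X$ itself, which is the actual content of the lemma, rests on an unjustified claim: that the quotient-stack construction for a smooth groupoid in \emph{algebraic spaces} applies ``verbatim'' with ``Artin stack'' in place of ``algebraic space.'' That is precisely the point at issue. When $X_0$ is a stack, the prestack of ``$X_1$-torsors with equivariant map to $X_0$'' over a test scheme $T$ involves morphisms $T\to X_0$, which form a groupoid rather than a set, so the quotient prestack is a priori a $2$-categorical object and one cannot simply cite the classical references. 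You flag this yourself at the end (``the main obstacle\dots the writing burden will be concentrated in justifying that the classical construction applies''), but deferring it leaves the proof incomplete, since this is the step the lemma exists to supply. The paper's proof resolves it with a short reduction you do not mention: choose a smooth covering $U\to X_0$ by a \emph{scheme} and restrict the groupoid to $U$; because inert morphisms are representable, the resulting $R\toto U$ is a smooth groupoid in algebraic spaces, so $X:=[U/R]$ is an Artin stack by the standard theory, and the required properties then follow from a ($1$-categorical) diagram chase. That single device is what lets one avoid redoing any $2$-stack theory, and your write-up needs either it or a genuine substitute.

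Separately, your verification that $q$ is inert is garbled: you first assert $I_{X_0/X}=X_1$, which confuses the relative inertia with the relative diagonal ($\Delta_{X_0/X}=X_0\times_XX_0=X_1$, whereas $I_{X_0/X}$ is the further fiber product of the unit section $X_0\to X_1$ with itself), then you note the conclusion fails and pivot. The eventual idea — check inertness of $q$ after the fppf base change $q$ itself, where it becomes inertness of $p_1$ — is reasonable, but as written it needs the (nontrivial, and here unproved) fact that the isomorphy of $I_{X_0}\to q^*I_X$ can be detected after such a pullback; this should be argued via the identity $I_{X_1}=I_{X_0}\times_{I_X}I_{X_0}$ rather than asserted.
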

\begin{proof}
Let $U\to X_0$ be a smooth covering by a scheme and set $$R=X_1\times_{p_2,X_0} U\times_{X_0,p_1}X_1.$$ Since inert morphisms are representable, $R$ is an algebraic space and we obtain a smooth groupoid $R\toto U$ in algebraic spaces. So, a quotient $X=[U/R]$ is an Artin stack, and a (mostly 1-categorical) diagram chase shows that $X$ is as required.
\end{proof}

\subsubsection{Inertia of special types}
We say that a stack $X$ has {\em finite inertia} if the morphism $I_X\to X$ is finite, and we say that $X$ has {\em diagonalizable inertia} if the geometric fibers of $I_X\to X$ are diagonalizable groups. For example, both conditions are satisfied when $X$ admits an \'etale inert covering of the form $[Z/G]\to X$, where $Z$ is a scheme acted on by a diagonalizable group $G$.

\subsection{Coarse spaces}

\subsubsection{Coarse moduli spaces and their basic properties}
Recall that by the Keel-Mori theorem, a stack $X$ with finite inertia possesses a coarse moduli space $X_\cs$, see \cite{Keel-Mori}. A more complete stack-theoretic formulation is in \cite[Theorem~2.2.1]{Abramovich-Vistoli}. In the sequel, we will say that $X_\cs$ is the {\em coarse space} of $X$ and $X\to X_\cs$ is the {\em total coarsening morphism} of $X$. Recall that for any flat morphism of algebraic spaces $Z\to X_\cs$, the base change morphism $Y=X\times_{X_\cs}Z\to Z$ is a total coarsening morphism and the projection $Y\to X$ is flat and inert. Conversely, any inert flat morphism $h\:Y\to X$ is the base change of $h_\cs\:Y_\cs\to X_\cs$, see \cite[Lemma~2.2.2]{Abramovich-Vistoli}.

\subsubsection{The universal property}
The coarse space of $X$ is the initial morphism form $X$ to algebraic spaces, and we will extend this to stacks. We say that an inertia map $I_X\to I_Z$ is {\em trivial} if it factors through the unit $Z\to I_Z$. This happens if and only if $I_{X/Z}=I_X$.

\begin{theorem}\label{univth0}
Assume that $\phi\:X\to Z$ is a morphism of Artin stacks and the inertia of $X$ is finite. Then,

(i) The inertia map $I_\phi\:I_X\to I_Z$ is trivial if and only if $\phi$ factors through the coarse space $f\:X\to X_\cs$: there exists $\psi\:X_\cs\to Z$ and a 2-isomorphism $\alpha\:\phi\toisom\psi\circ f$.

(ii) A factorization in (i) is unique in the sense of 2-categories: if $\psi'$ and $\alp'$ form another such datum then there exists a unique 2-isomorphism $\psi=\psi'$ making the whole diagram 2-commutative.
\end{theorem}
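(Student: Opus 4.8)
The plan is to reduce the statement to the universal property of the coarse moduli space for \emph{algebraic spaces} (Keel--Mori, as formulated in \cite[Theorem~2.2.1]{Abramovich-Vistoli}), using the triviality hypothesis on $I_\phi$ to manufacture, locally, a factorization and then gluing. First I would establish the ``if'' direction of (i), which is formal: if $\phi = \psi \circ f$ with $f\:X\to X_\cs$ the coarse space, then since $I_{X_\cs} = X_\cs$ (a coarse space is an algebraic space, so has trivial inertia), the functoriality of inertia gives that $I_\phi = I_\psi \circ I_f$ factors through $f^*(I_{X_\cs}) = X$, i.e. through the unit section, hence $I_\phi$ is trivial. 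For the ``only if'' direction, assume $I_\phi$ is trivial, equivalently $I_{X/Z} = I_X$. The key point is that triviality of $I_\phi$ says precisely that the composite $I_X \to \phi^*(I_Z)$ is the trivial homomorphism; I want to leverage this to see that $\phi$ does not ``see'' the automorphisms of $X$, and therefore descends along the automorphism-quotient $f\:X\to X_\cs$.

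The main step is constructing $\psi$. I would argue as follows. Choose a smooth presentation $U\to X$ by a scheme; then $R = U\times_X U \rightrightarrows U$ is a smooth groupoid in algebraic spaces with $X = [U/R]$, and $X_\cs$ is the coarse space of this quotient, presented by the image groupoid $R_\cs \rightrightarrows U_\cs$ in the Keel--Mori construction (or directly: $X_\cs$ is an fppf-locally-on-$X_\cs$ quotient). The composite $U\to X\xrightarrow{\phi} Z$ is a morphism $g\:U\to Z$ equipped, via the two projections $R\rightrightarrows U$, with a $2$-isomorphism $g\circ \mathrm{pr}_1 \cong g\circ \mathrm{pr}_2$ satisfying the cocycle condition over $R\times_U R$ — this is exactly the descent datum that presents $\phi$. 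To descend $g$ further to $X_\cs$, I must check that this descent datum is pulled back from the coarser groupoid, i.e. that the two maps $R\rightrightarrows U$ induce, after the identification furnished by the $2$-isomorphism, a morphism that factors through $R\to R_\cs$. Here the triviality of $I_\phi$ enters: the obstruction to such a factorization is measured by how $\phi$ acts on the relative automorphism groups $\Aut(u)$ for geometric points $u$ of $U$, and triviality of $I_X\to \phi^*(I_Z)$ says this action is trivial on every geometric fiber. Since $X$ has finite inertia, the relevant stabilizers are finite and the quotient $X_\cs$ is obtained by a genuine (fppf-local) quotient by these finite groups, so a descent datum on which these groups act trivially descends. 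Concretely I would phrase this using \cite[Lemma~2.2.2]{Abramovich-Vistoli}: $\phi$ kills inertia, so the map $X\to X\times_Z X$ factors through $X\times_{X_\cs}X$ after the coarsening, which is exactly the condition needed to produce $\psi\:X_\cs\to Z$ with $\phi\cong\psi\circ f$.

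For part (ii), uniqueness, I would again use that $f\:X\to X_\cs$ is an epimorphism in the relevant sense (it is a coarse space, hence in particular a universal homeomorphism and a proper quasi-finite surjection, so faithfully flat after suitable base change / fppf-surjective on points) and that any two factorizations $\psi, \psi'$ agree after precomposition with $f$; the $2$-isomorphisms $\alpha\:\phi\cong\psi\circ f$ and $\alpha'\:\phi\cong\psi'\circ f$ combine to give a $2$-isomorphism $\psi\circ f\cong\psi'\circ f$, which must descend to a unique $2$-isomorphism $\psi\cong\psi'$ by the $2$-categorical universal property of the quotient presentation (an argument of the same flavor as the ``moreover'' clause in Lemma \ref{inertquot}, applied to the smooth presentation of $X$ over $X_\cs$). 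The cocycle compatibility making the whole diagram $2$-commute is then automatic by construction.

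I expect the main obstacle to be the descent step in the second paragraph: carefully setting up the groupoid presentations of $X$ over $X_\cs$ and of $Z$, and checking that triviality of $I_\phi$ on \emph{geometric} fibers is enough to descend the $2$-isomorphism data (not just the $1$-morphism $g$) without introducing spurious automorphisms — this is where the finiteness of inertia and the precise form of Keel--Mori (fppf-local quotient by finite stabilizers) must be used, and where a naive argument could fail if $X_\cs$ were only a coarse space in a weaker sense.
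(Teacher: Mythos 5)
The easy direction and the overall shape of your argument are fine, but the central descent step does not work as you have set it up, and the gap is exactly the one you flag at the end. You present $X=[U/R]$ by a smooth groupoid on the \emph{source} side and then try to descend $g\:U\to Z$ along $U\to X_\cs$, claiming that $X_\cs$ is ``presented by the image groupoid $R_\cs\toto U_\cs$.'' This is not correct: $U$ and $R$ are algebraic spaces, so they equal their own coarse spaces and that groupoid just presents $X$ again; more importantly, the coarse moduli map $X\to X_\cs$ is not flat, so $U\to X_\cs$ is not a covering in any topology for which you have effective descent of morphisms to a stack $Z$, and ``the stabilizers act trivially on the descent datum'' is not a substitute for an actual descent theorem along a non-flat map. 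Your appeal to \cite[Lemma~2.2.2]{Abramovich-Vistoli} is also misplaced here: that lemma concerns inert \emph{flat} morphisms being pulled back from their coarse spaces, and it does not apply to $U\to X$ or $U\to X_\cs$. The same problem infects your uniqueness argument, which asserts that $f$ is ``faithfully flat after suitable base change''; it is not.

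The missing idea is to cover the \emph{target} instead. Choose a smooth cover $Z_0\to Z$ by a scheme, set $Z_1=Z_0\times_ZZ_0$ and $X_i=X\times_ZZ_i$. Triviality of $I_\phi$ together with the base-change formulas for inertia gives $I_{X_i}=I_X\times_XX_i$, i.e.\ the smooth surjections $X_i\to X$ are inert; hence each $X_i$ has finite inertia and a coarse space $Y_i=(X_i)_\cs$, and by \cite[Lemma~2.2.2]{Abramovich-Vistoli} (now correctly applied, since $X_i\to X$ is flat and inert) $Y_1\toto Y_0$ is a smooth groupoid with quotient $X_\cs$. Each $X_i\to Z_i$ factors uniquely through $Y_i$ by the ordinary Keel--Mori universal property, because the targets $Z_i$ are algebraic spaces; the resulting morphism of groupoids $(Y_1\toto Y_0)\to(Z_1\toto Z_0)$ yields $\psi\:X_\cs\to Z$, and the uniqueness of these local factorizations gives (ii). This reduction to algebraic-space targets is what lets one avoid ever descending along the non-flat coarse map.
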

\begin{proof}
If $\phi$ factors through $f$ then $I_\phi$ factors through the inertia $I_{X_\cs}$, which is trivial. Conversely, assume that $I_\phi$ is trivial. Choose a smooth covering of $Z$ by a scheme $Z_0$ and set $Z_1=Z_0\times_ZZ_0$ and $X_i=X\times_ZZ_i$. Since $I_{Z_i}$ and $I_\phi$ are trivial, (\ref{inertiaeq1}) and (\ref{inertiaeq2}) imply that $I_{X_i}=I_X\times_XX_i$, and we obtain that the smooth surjective morphisms $X_i\to X$ are inert. It follows that each $X_i$ has finite inertia, in particular, coarse spaces $Y_i=(X_i)_\cs$ are defined, and $Y_1\toto Y_0$ is a smooth groupoid with quotient $X_\cs$. For $i=0,1$ the map $X_i\to Z_i$ factors through $Y_i$ uniquely, and the induced morphism of groupoids $(Y_1\toto Y_0)\to(Z_1\toto Z_0)$ gives rise to the unique morphism $\psi\:X_\cs\to Z$ as required.
\end{proof}

\subsection{General coarsening morphisms}\label{Sec:coarsening}

\subsubsection{Coarsening morphisms}
We say that a morphism of stacks $X\to Y$ is a {\em coarsening morphism} if for any flat morphism $Z\to Y$ with $Z$ an algebraic space the base change $X\times_YZ\to Z$ is a total coarsening morphism. It is easy to see that coarsening morphisms are preserved by composition and arbitrary flat base change, not necessarily representable. In addition, being a coarsening morphism is a flat-local property on the target. In fact, one can show that this is the smallest class of morphisms containing total coarsening morphisms and closed under flat base changes and descent.

\begin{remark}\label{coarserem}
We use a new terminology and definition, but the object is not new. We refer to \cite[Section 3]{AOV2} for the definition of relative coarse moduli space $X_{\cs/S}$ of a morphism of stacks $X\to S$ with finite relative inertia. It is easy to see that $X\to X_{\cs/S}$ is a coarsening morphism and, conversely, for every coarsening morphism $X\to Y$ one has that $Y=X_{\cs/Y}$.
\end{remark}

\subsubsection{Basic properties}
In view of Remark~\ref{coarserem}, the following lemma is essentially covered by \cite[Theorem~3.2]{AOV2}, but we provide a proof for completeness.

\begin{lemma}\label{coarselem}
Let $X$ be an Artin stack with finite inertia and let $f\:X\to Y$ be a coarsening morphism. Then,

(i) There exists a unique morphism $g\:Y\to X_\cs$ such that $g\circ f$ is isomorphic to the total coarsening morphism $h\:X\to X_\cs$.

(ii) $f$ is a proper homeomorphism that has no non-trivial automorphisms.

(iii) $Y_\cs=X_\cs$, i.e. $g$ is the total coarsening morphism.

In particular, the 2-category of coarsening morphisms of $X$ is equivalent to a category and $h$ is its final object.
\end{lemma}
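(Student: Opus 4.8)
The overall strategy is to reduce every assertion to the already-available case of a \emph{total} coarsening morphism by working flat-locally on $Y$ and then descending; by Remark~\ref{coarserem} the lemma is, up to translation, \cite[Theorem~3.2]{AOV2}, so the only task is to give a self-contained argument. For part (i) I would choose a smooth presentation $p\:Y_0\to Y$ by a scheme, set $Y_1=Y_0\times_YY_0$ and $X_i=X\times_YY_i$ for $i=0,1$. Since $f$ is a coarsening morphism and $Y_i\to Y$ is flat, each $X_i\to Y_i$ is the total coarsening morphism; in particular $X_i$ has finite inertia and $Y_i=(X_i)_\cs$. The composite $X_i\to X\xrightarrow{h}X_\cs$ is a morphism to an algebraic space, so by the Keel--Mori universal property it factors uniquely as $X_i\to Y_i\xrightarrow{g_i}X_\cs$. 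Composing with $X_1\to Y_1$, the two pullbacks of $g_0$ along $Y_1\toto Y_0$ both recover the map $X_1\to X_\cs$, so by uniqueness of that factorization they agree with each other and with $g_1$; the cocycle identity over $Y_0\times_YY_0\times_YY_0$ is checked the same way. As morphisms to the algebraic space $X_\cs$ form a sheaf on $Y$, the descent datum $g_0$ glues to $g\:Y\to X_\cs$. That $g\circ f\cong h$, and that $g$ is the unique such morphism, are then verified after the faithfully flat base change $X_0\to X$, where both statements collapse to the tautological factorization of $X_0\to X_\cs$ through $(X_0)_\cs=Y_0$; since the target is an algebraic space there are no $2$-isomorphisms to track.

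For part (ii), being proper, being a homeomorphism, and having no non-trivial automorphisms are each flat-local on $Y$; flat-locally on $Y$ the morphism $f$ is a total coarsening morphism, which by \cite{Keel-Mori} (see \cite[Theorem~2.2.1]{Abramovich-Vistoli}) is proper and a universal homeomorphism and has no non-trivial automorphisms, its target being an algebraic space. Descending these properties along the flat cover $Y_0\to Y$ gives the claim for $f$.

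For part (iii) I would first check that $g\:Y\to X_\cs$ is itself a coarsening morphism. Given a flat $Z\to X_\cs$ with $Z$ an algebraic space, $X\times_{X_\cs}Z\to Z$ is a total coarsening morphism (flat base change of $h$, as recalled before the lemma). Writing $X\times_{X_\cs}Z=X\times_Y(Y\times_{X_\cs}Z)$, its projection to $Y\times_{X_\cs}Z$ is the base change of $f$ along the flat morphism $Y\times_{X_\cs}Z\to Y$, hence is also a total coarsening morphism. Thus $Y\times_{X_\cs}Z$ and $Z$ are both the coarse space of $X\times_{X_\cs}Z$, so the canonical morphism $Y\times_{X_\cs}Z\to Z$ is an isomorphism, in particular a total coarsening morphism. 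Therefore $g$ is a coarsening morphism, and applying the definition with the trivial base change $Z=X_\cs$ shows $Y\to X_\cs$ is itself a total coarsening morphism; in particular $Y$ has finite inertia (equivalently $I_Y=I_{Y/X_\cs}$, which is finite by flat descent from the finite inertia of $Y\times_{X_\cs}Z$) and $Y_\cs=X_\cs$, i.e.\ $g$ is the total coarsening morphism.

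Finally, for the concluding assertion: by (i) every coarsening morphism $X\to Y$ receives a morphism to $h\:X\to X_\cs$ in the $2$-category of coarsening morphisms of $X$, and the uniqueness in (i) together with the absence of non-trivial automorphisms in (ii) forces this morphism — and more generally any two parallel morphisms between coarsening morphisms of $X$ — to be unique up to a unique $2$-isomorphism; hence all $\Hom$-categories are discrete, the $2$-category is equivalent to a category, and $h$ is its final object. The only steps carrying real content are the descent bookkeeping in (i) and the base-change computation in (iii) showing $g$ is a coarsening morphism; I expect the former to be the main (though purely routine) obstacle, everything else being formal once the universal property of coarse spaces and the flat-local structure of coarsening morphisms are in place.
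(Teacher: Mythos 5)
Your part (i) is correct and is essentially the paper's own argument (descend the unique factorizations $X_i\to Y_i\to X_\cs$ along the atlas). The problems are in (ii) and (iii).

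In (ii), the claim that \emph{having no non-trivial automorphisms is flat-local on $Y$} is a false general principle, and it is exactly the point that needs an argument. A $2$-automorphism of $f$ is a section of $f^*I_Y=I_Y\times_YX$ over $X$; after base change along $Y_0\to Y$ you are computing automorphisms of $f_0\:X_0\to Y_0$, i.e.\ sections of $f_0^*I_{Y_0}$, and $I_{Y_0}$ is \emph{not} the pullback of $I_Y$ (the cover $Y_0\to Y$ is not inert). Concretely, $\mathrm{pt}\to BG$ has automorphism group $G$, while its base change along the flat cover $\mathrm{pt}\to BG$ is $G\to\mathrm{pt}$, which has none; so triviality of $\Aut(f_0)$ does not give triviality of $\Aut(f)$. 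The restriction of an automorphism of $f$ to $X_0$ is an automorphism of the \emph{composite} $X_0\to Y_0\to Y$, i.e.\ a morphism $X_0\to Y_1=Y_0\times_YY_0$ lifting $(f_0,f_0)$; the actual argument (the one the paper uses) is that any such morphism factors through $Y_0=(X_0)_\cs$ because $Y_1$ is an algebraic space, and hence must be the diagonal. The properness/homeomorphism part of your (ii) is fine.

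In (iii) you take a different route from the paper, and it breaks. You assert that $X\times_{X_\cs}Z\to Y\times_{X_\cs}Z$ is a \emph{total} coarsening morphism because it is a flat base change of $f$; but the definition only yields this when the base is an algebraic space, and $Y\times_{X_\cs}Z$ is a stack in general (take $f=\id_X$, which is a coarsening morphism: then $Y\times_{X_\cs}Z=X\times_{X_\cs}Z$ is stacky and the map in question is the identity). What is true is that it is a coarsening morphism, which is weaker. Consequently your deduction that $Y\times_{X_\cs}Z\to Z$ is an \emph{isomorphism} is false in the same example, and the route "show $g$ is a coarsening morphism by showing these fibre products collapse" is circular: knowing that $Y\times_{X_\cs}Z\to Z$ is a total coarsening is essentially equivalent to the statement $Y_\cs=X_\cs$ being proved. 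The paper avoids this by verifying the universal property of $X_\cs$ for $Y$ directly: any $Y\to T$ with $T$ an algebraic space is tested against the atlas $Y_i=(X_i)_\cs$, where the factorization through $X_\cs$ is already known, and then descended.
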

\begin{proof}
(i) Choose an atlas $Y_1\toto Y_0$ of $Y$ and set $X_i=Y_i\times_YX$. Then $Y_i=(X_i)_\cs$ and hence the composed morphisms $X_i\to X\to X_\cs$ factor uniquely through morphisms $f_i\:Y_i\to X_\cs$. The uniqueness implies that $f_1$ coincides with both pullbacks of $f_0$, hence $f$ descends to a morphism $f\:Y\to X_\cs$, which is unique.

(ii) Continuing with the notation above, since $f_i$ are total coarsening morphisms, they are proper homeomorphisms, and hence the same is true for $f$ by descent. Automorphisms of $f$ correspond to morphisms $h\:X_0\to Y_1$ such that the compositions with $Y_1\toto Y_0$ coincide. Since $Y_0=(X_0)_\cs$, any morphism $h$ factors through $Y_0$, and hence the only such $h$ is the composition of $X_0\to Y_0$ with the diagonal $Y_0\to Y_1$. This $h$ corresponds to the identity automorphism.

(iii) We should prove that a morphism $Y\to T$ with $T$ an algebraic space factors uniquely through $X_\cs$. The composed morphism $X\to Y\to T$ factors through $X_\cs$ uniquely, hence the morphisms $X_i\to X\to T$ factor through $X_\cs$. Since $Y_i=(X_i)_\cs$ we obtain that the morphisms $Y_i\to T$ factor through $X_\cs$ in a compatible way, and hence they descend to a morphism $Y\to X_\cs$ through which $Y\to T$ factors.
\end{proof}

\subsubsection{The universal property}
Similarly to coarse spaces, coarsening morphisms can be described by a universal property.

\begin{theorem}\label{univth}
Let $\phi\:X\to Z$ be a morphism of Artin stacks. Assume that $X$ is an Artin stack with finite inertia and $f\:X\to Y$ is a coarsening morphism.

(i) The following conditions are equivalent: (a) $\phi$ factors through $f$, (b) $I_\phi\:I_X\to I_Z$ factors through $I_f\:I_X\to I_Y$, (c) the map $I_{X/Y}\to I_Z$ is trivial, (d) $I_{X/Y}\subseteq I_{X/Z}$.

(ii) A factoring of $\phi$ through $f$ in (i) is unique in the 2-categorical sense (see Theorem~\ref{univth0}(ii)). In other words, $f$ is a 2-categorical epimorphism.
\end{theorem}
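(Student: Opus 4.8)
The plan is to reduce everything to Theorem~\ref{univth0}, the special case $Y=X_\cs$, by replacing $Y$ with a smooth groupoid presentation built from the coarsening property.

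I will first dispose of the equivalences in (i) cheaply, leaving (c)$\Rightarrow$(a) as the real content. The implication (a)$\Rightarrow$(b) is just functoriality of inertia. For (b)$\Rightarrow$(c), recall from (\ref{inertiaeq1}) that $I_{X/Y}=\Ker(I_X\to f^*(I_Y))$ as group objects over $X$; if $I_\phi$ factors through $I_f$, then $I_{X/Y}\to I_Z$ is the composite of the kernel inclusion $I_{X/Y}\hookrightarrow I_X$ with $I_f$ and then the factoring map, and the first two of these compose to the trivial homomorphism, so $I_{X/Y}\to I_Z$ is trivial. The equivalence (c)$\Leftrightarrow$(d) is formal: again by (\ref{inertiaeq1}), $I_{X/Z}=\Ker(I_X\to\phi^*(I_Z))$, and the inclusion $I_{X/Y}\subseteq I_{X/Z}$ says exactly that the composite $I_{X/Y}\hookrightarrow I_X\to\phi^*(I_Z)$ --- equivalently the map $I_{X/Y}\to I_Z$ --- is trivial. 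Once (c)$\Rightarrow$(a) is established, the cycle (a)$\Rightarrow$(b)$\Rightarrow$(c)$\Rightarrow$(a) closes all four conditions.

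For (c)$\Rightarrow$(a): choose a smooth atlas $Y_0\to Y$ by a scheme and form the smooth groupoid $Y_1\toto Y_0$ with $Y_1=Y_0\times_YY_0$, so that $Y=[Y_0/Y_1]$. Put $\hat X_i=X\times_YY_i$ and let $r_i\:\hat X_i\to X$ be the projection. Since $Y_i\to Y$ is flat with $Y_i$ an algebraic space and $f$ is a coarsening morphism, $\hat X_i\to Y_i$ is a total coarsening morphism, so $Y_i=(\hat X_i)_\cs$; moreover $r_i$ is representable, being a base change of $Y_i\to Y$, and $\hat X_i$ has finite inertia because by (\ref{inertiaeq2}) one computes $I_{\hat X_i}=I_{X/Y}\times_X\hat X_i$, which is finite over $\hat X_i$. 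The same computation shows that the inertia map of $\phi_i:=\phi\circ r_i\:\hat X_i\to Z$ is the pullback along $r_i$ of $I_{X/Y}\hookrightarrow I_X\xrightarrow{I_\phi}I_Z$, hence trivial by (c). Theorem~\ref{univth0}(i) then factors each $\phi_i$ as $\phi_i\cong\psi_i\circ(\hat X_i\to Y_i)$ with $\psi_i\:Y_i\to Z$, and the uniqueness in Theorem~\ref{univth0}(ii) forces the $\psi_i$ to carry the groupoid structure --- the 2-isomorphisms over $Y_1$, and their cocycle compatibility over $Y_2=Y_0\times_YY_0\times_YY_0$, are the unique ones. Descending along $Y_0\to Y$ produces $\psi\:Y\to Z$, and $\psi\circ f\cong\phi$ holds since it does after pullback to $Y_0$.

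Part (ii) is the same descent argument one level down: two factorizations $\psi,\psi'\:Y\to Z$ of $\phi$ restrict over $Y_0$ to two factorizations of $\phi_0$ through $(\hat X_0)_\cs=Y_0$, hence are canonically 2-isomorphic by Theorem~\ref{univth0}(ii); this 2-isomorphism is compatible over $Y_1$ by the same uniqueness, so it descends to a 2-isomorphism $\psi\cong\psi'$, unique because uniqueness of a 2-isomorphism can be checked after pullback along the cover $Y_0\to Y$. I expect the only step with genuine content to be (c)$\Rightarrow$(a), and within it the two identities $Y_i=(\hat X_i)_\cs$ (from the coarsening hypothesis) and $I_{\hat X_i}=I_{X/Y}\times_X\hat X_i$ (from (\ref{inertiaeq2})): together they make hypothesis (c) precisely the condition needed to apply Theorem~\ref{univth0} chart by chart, after which everything is formal 2-categorical descent.
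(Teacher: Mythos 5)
Your proposal is correct and follows essentially the same route as the paper: the easy implications via the definition of $I_{X/Y}$ as a kernel, and then (c)$\Rightarrow$(a) by pulling back to a smooth groupoid presentation $Y_1\toto Y_0$ of $Y$, identifying $Y_i=(X\times_YY_i)_\cs$ from the coarsening hypothesis, computing $I_{X\times_YY_i}$ as the pullback of $I_{X/Y}$, and applying Theorem~\ref{univth0} chart by chart before descending. The extra details you supply (finiteness of the inertia of the charts, the cocycle compatibility of the 2-isomorphisms) are points the paper leaves implicit, and your argument for (ii) matches the paper's one-line appeal to the uniqueness in Theorem~\ref{univth0}(ii).
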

\begin{proof}
The implications (a)$\implies$(b)$\implies$(c)$\Longleftrightarrow$(d) in (i) follow from definitions and the base change property of inertia, see (\ref{inertiaeq1}) in Section \ref{Sec:basic-inertia}. So assume that the map $I_{X/Y}\to I_Z$ is trivial and let us prove (a). Consider a smooth covering of $Y$ by a scheme $Y_0$ and set $Y_1=Y_0\times_YY_0$ and $X_i=Y_i\times_XY$. Since $I_{X_i}=I_X\times_{I_Y}I_{Y_i}$ and $I_{Y_i}$ is trivial, we obtain that $I_{X_i}$ is the pullback of $I_{X/Y}$, and hence the morphisms $I_{X_i}\to I_Z$ are trivial. By Theorem~\ref{univth0}, the morphisms $X_i\to Z$ factor through $Y_i=(X_i)_\cs$ uniquely. We obtain a morphism of groupoids $(Y_1\toto Y_0)\to Z$, which gives rise to a required morphism $Y\to Z$. Part (ii) follows since the morphisms $Y_i\to Z$ are unique up to unique 2-isomorphisms.
\end{proof}

\begin{remark}
(i) The theorem implies that any coarsening morphism $f$ is a 2-categorical epimorphism.

(ii) The implication (c)$\implies$(b) in the theorem is non-trivial. Informally, it indicates that $f^*(I_Y)=I_X/I_{X/Y}$. (To prove that this is indeed a group scheme quotient we should have test it with all group schemes over $X$, while (b) only uses group schemes which are a pullback of some $I_Z$.)
\end{remark}

\subsubsection{Kernel subgroups}
In the sequel, it will be convenient to control a subgroup $I_{X/Y}$ of $I_X$ by geometric points. We suspect this can be done for extensions of tame and \'etale groups, but we restrict to the \'etale case for simplicity. If $G$ is a group scheme over $X$ then by a {\em kernel subgroup} $H\into G$ we mean any subgroup which is the kernel of a homomorphism $G\to G'$.

\begin{lemma}
Assume that $G$ is a finite group scheme over $X$ whose geometric fibers $G_x$ are \'etale. Then any kernel subgroup $H\into G$ is uniquely determined by its geometric fibers $H_x$.
\end{lemma}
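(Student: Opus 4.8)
The plan is to show that every kernel subgroup of $G$ is in fact an open and closed subscheme of $G$, and then to distinguish two such subgroups by their geometric fibres using an elementary argument about idempotents. As a preliminary step I would first record that $G\to X$ is automatically unramified: $\Omega_{G/X}$ is a finitely generated $\cO_G$-module, $G\to X$ is finite and hence of finite type, and the hypothesis that each geometric fibre $G_x$ is étale says precisely that $\Omega_{G/X}$ vanishes on every geometric fibre; these vanishings propagate to the ordinary fibres $\Omega_{G/X}\otimes_{\cO_X}k(x)$, and since $G$ is finite over $X$, Nakayama's lemma then yields $\Omega_{G/X}=0$.

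Next I would prove that a kernel subgroup $H=\Ker(\varphi\:G\to G')$ is open and closed in $G$. Since $\varphi$ is a homomorphism, the unit section $e_{G'}\:X\to G'$ factors through the schematic image $\overline{G}$ of $\varphi$, so we may replace $G'$ by $\overline{G}$ without changing $H$; as $\overline{G}$ is a closed subscheme of $G'$ receiving a surjection from the finite $X$-scheme $G$, it is finite over $X$, and its geometric fibres — being images of the étale finite groups $G_x$ — are finite and reduced, hence étale. Thus we may assume $G'\to X$ is finite with étale geometric fibres, so by the preliminary step it is finite unramified. For a group scheme one has $\Omega_{G'/X}\cong p^*e_{G'}^*\Omega_{G'/X}$, so $G'\to X$ being unramified forces the unit section $e_{G'}$ to be an open immersion; being finite, $G'\to X$ is separated, so $e_{G'}$ is a closed immersion as well. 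Hence $e_{G'}(X)$ is open and closed in $G'$, and therefore $H=\varphi^{-1}(e_{G'}(X))$ is open and closed in $G$.

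To conclude, write $G=\Spec_X\cA$ with $\cA$ a finite $\cO_X$-algebra, so that open and closed subschemes of $G$ correspond to idempotent sections of $\cA$. If $H_1,H_2$ are kernel subgroups with the same geometric fibres, corresponding to idempotents $e_1,e_2\in\Gamma(X,\cA)$, then on each geometric fibre the images of $e_1$ and $e_2$ in $\cA\otimes k(\ox)$ cut out the same clopen subscheme, hence coincide (idempotents of a ring biject with the clopen subsets of its spectrum); since $\cA\otimes k(x)\into\cA\otimes k(\ox)$ is injective, $e_1$ and $e_2$ already have equal image in every residue field of $G$, so $e_1-e_2$ lies in the nilradical of $\cA$ and is nilpotent. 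A direct computation from $e_i^2=e_i$ gives $(e_1-e_2)^3=e_1-e_2$, and a nilpotent element satisfying $t^3=t$ is $0$; thus $e_1=e_2$ and $H_1=H_2$.

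The main obstacle I anticipate is the claim that a kernel subgroup is open and closed in $G$: everything hinges on reducing the target $\varphi\:G\to G'$ to a group scheme that is finite — or at least unramified and separated — over $X$, so that its unit section is an open-closed immersion. The schematic-image reduction is the one place demanding care, especially if separatedness is not built into the definition of a group scheme; once past it, the remaining argument is purely formal.
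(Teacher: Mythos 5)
Your overall strategy --- show that a kernel subgroup is open and closed in $G$, then observe that a clopen subscheme is determined by its geometric fibres via idempotents --- is sound and genuinely different from the paper's proof, which reduces by \'etale descent and a limit argument to a strictly henselian local base and identifies $H$ with a union of connected components of $G$ indexed by $H_x$. Your preliminary step ($G\to X$ is finite unramified, by the fibrewise criterion and Nakayama), the fact that a section of a finite unramified morphism is an open and closed immersion (it is a pullback of the diagonal, which is an open immersion for unramified morphisms and a closed one by separatedness), and the concluding idempotent computation are all correct.

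The gap is exactly where you anticipated it: the reduction of $G'$ to its schematic image $\overline{G}$. Formation of schematic images does not commute with non-flat base change, so the geometric fibre of $\overline{G}$ over $\ox$ is in general \emph{not} the image of $G_{\ox}$ and need not be reduced. Concretely, take $X=\Spec(\FF_p[t])$, $G=(\ZZ/p\ZZ)_X$ and $\varphi\:G\to\GG_{a,X}$ sending the generator to the $p$-torsion point $t$; the schematic image is $V(u^p-t^{p-1}u)$, whose fibre over $t=0$ is $\Spec(k[u]/(u^p))$, and indeed $\Ker(\varphi)=X\sqcup\coprod_{i=1}^{p-1}V(t)$ is \emph{not} open in $G$. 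Worse, the homomorphism sending the generator to $t^2$ has kernel $X\sqcup\coprod_{i=1}^{p-1}V(t^2)$, which has the same geometric fibres but is a different subscheme --- so no reduction to a well-behaved target can succeed for an arbitrary $G'$, and the statement itself requires the tacit hypothesis (satisfied in the paper's only application, where $G'=f^*(I_Y)$ is again finite with \'etale fibres) that $G'$, or at least its unit section, is unramified and separated over $X$. With that hypothesis your argument closes up immediately and needs no schematic image: $e_{G'}$ is already a clopen immersion, so $H=\varphi^{-1}(e_{G'}(X))$ is clopen, and your idempotent argument finishes the proof, giving a clean global alternative to the paper's henselization argument. (Note that the paper's own one-line proof silently uses the same hypothesis when it asserts that $H$ is a union of components of $G$.)
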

\begin{proof}
\'Etale descent and a limit argument reduce this to the case when $X=\Spec(A)$ for a strictly henselian local ring $A$ and $x\in X$ is the closed point. Then $G_x$ is discrete and it is easy to see 
that any kernel subgroup $H$ is the union of the components of $G$ parameterized by the elements of $H_x$, where $H_x$ is a normal subgroup of $G_x$.
\end{proof}

Combining the lemma with Theorem~\ref{univth} we obtain the following result.

\begin{corollary}\label{coarsecor}
Assume that $X$ is DM stack with finite inertia. Then any coarsening morphism $f\:X\to Y$ is uniquely determined by the set of geometric relative stabilizers $(I_{Y/X})_x\into (I_X)_x$, where $x\to X$ runs over the geometric points of $X$.
\end{corollary}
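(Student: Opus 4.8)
The plan is to combine the lemma just proved on kernel subgroups with the universal property of coarsening morphisms in Theorem~\ref{univth}. The underlying point is that a coarsening morphism $f\:X\to Y$ is recorded, up to canonical equivalence, by the subgroup $I_{X/Y}\subseteq I_X$, and that this subgroup, being a \emph{kernel} subgroup, is in turn recorded by its geometric fibers.

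First I would check that we are in the situation of the lemma. By the base change formula~(\ref{inertiaeq1}) one has $I_{X/Y}=\Ker\bigl(I_X\to f^*(I_Y)\bigr)$, so $I_{X/Y}\into I_X$ is a kernel subgroup in the sense introduced above. Since $X$ is a DM stack, the morphism $I_X\to X$ is unramified (it is a base change of the unramified diagonal of $X$), and by hypothesis it is finite; hence $I_X$ is a finite group scheme over $X$ whose geometric fibers $(I_X)_x$ are \'etale. The lemma therefore applies and $I_{X/Y}\into I_X$ is uniquely determined by the collection of geometric fibers $(I_{X/Y})_x\into (I_X)_x$.

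It remains to see that $f$ itself is determined, up to canonical equivalence, by the subgroup $I_{X/Y}\subseteq I_X$. Let $f\:X\to Y$ and $f'\:X\to Y'$ be coarsening morphisms with $I_{X/Y}=I_{X/Y'}$ inside $I_X$. Applying Theorem~\ref{univth} to $\phi=f'$, condition (i)(d) becomes $I_{X/Y}\subseteq I_{X/Y'}$, which holds; so $f'$ factors as $f'\toisom g\circ f$ for some $g\:Y\to Y'$, unique up to unique $2$-isomorphism by part~(ii). Symmetrically one gets $g'\:Y'\to Y$ with $f\toisom g'\circ f'$. Then $(g'\circ g)\circ f\toisom f\toisom\id_Y\circ f$, and since $f$ is a $2$-categorical epimorphism (Theorem~\ref{univth}(ii)) this forces $g'\circ g\toisom\id_Y$; likewise $g\circ g'\toisom\id_{Y'}$. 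Hence $g$ is an equivalence $Y\toisom Y'$ compatible with the structure morphisms from $X$, unique in the $2$-categorical sense. Combining with the previous paragraph, the equivalence class of $f$ is determined by the family $\{(I_{X/Y})_x\into(I_X)_x\}_x$, as claimed.

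The argument is essentially formal once both ingredients are in place; the only step needing a little care is the verification in the second paragraph that the DM-with-finite-inertia hypothesis really does land us in the hypotheses of the lemma (a finite group scheme with \'etale geometric fibers, together with the kernel presentation of $I_{X/Y}$), and I do not expect it to present a genuine obstacle.
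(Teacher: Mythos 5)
Your proposal is correct and follows exactly the route the paper intends: it identifies $I_{X/Y}=\Ker(I_X\to f^*(I_Y))$ as a kernel subgroup of the finite group scheme $I_X$ with \'etale geometric fibers, applies the preceding lemma, and then uses Theorem~\ref{univth} (conditions (i)(d) and the 2-epimorphism property in (ii)) to recover $f$ up to canonical equivalence from $I_{X/Y}$. The paper's own proof is just the one-line instruction to combine these two ingredients, so your write-up is a faithful and complete expansion of it.
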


\subsection{Local structure}
It was observed in \cite[Lemma 2.2.3]{Abramovich-Vistoli} that the Keel-Mori theorem allows to describe the local structure of a DM stack $X$ with finite inertia. In the proof of the following theorem we use the same argument to obtain a local description of coarsening morphisms $X\to Y$. Note that a similar result when $X$ is tame but not necessarily DM was proved in \cite[Proposition~3.6]{AOV2}, but that case is more difficult.

\begin{theorem}\label{coarseth}
Assume that $X$ is a DM stack with finite inertia, $f\:X\to Y$ is a coarsening morphism, and $x\to X$ is a geometric point. Let $G_x$ and $G_y$ be the stabilizers of $x$ and $y=f(x)$, let $Z=X_\cs$ and let $z\to Z$ be the image of $x$. Then the homomorphism $\phi\:G_x\to G_y$ is surjective and there exists an \'etale neighborhood $Z'\to Z$ of $z$ such that $Y\times_ZZ'=[(U/H)/G_y]$ and $X\times_ZZ'=[U/G_x]$, where $H=\Ker(\phi)$ and $U$ is an affine scheme acted on by $G_x$ so that the stabilizer at $x$ is $G_x$ itself.
\end{theorem}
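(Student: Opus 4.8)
The plan is to reduce the statement to a computation \'etale-locally over $Z=X_\cs$, apply the local structure theorem for DM stacks with finite inertia to $X$, and then pin down $Y$ using the uniqueness of coarsening morphisms (Corollary~\ref{coarsecor}). First I would pass to an \'etale neighborhood of $z$. Since $f$ is a coarsening morphism, Lemma~\ref{coarselem}(iii) gives $Y_\cs=X_\cs=Z$, and $z$ is the common image of $x$ and $y$. For an \'etale morphism $Z'\to Z$ set $X_{Z'}=X\times_ZZ'$ and $Y_{Z'}=Y\times_ZZ'$; as $Z'\to Z$ is flat, $X_{Z'}\to Z'$ is again the total coarsening morphism, the morphisms $X_{Z'}\to X$ and $Y_{Z'}\to Y$ are flat and inert, $X_{Z'}$ is a DM stack with finite inertia, and $X_{Z'}\to Y_{Z'}$ is a coarsening morphism (coarsening morphisms being stable under flat base change) with $(Y_{Z'})_\cs=Z'$, again by Lemma~\ref{coarselem}(iii). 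Inertness of $X_{Z'}\to X$ and $Y_{Z'}\to Y$ leaves the stabilizers at $x$ and $y$ unchanged, so it suffices to produce the asserted presentations after replacing $Z$ by a suitable \'etale neighborhood of $z$.

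Next I would invoke \cite[Lemma~2.2.3]{Abramovich-Vistoli}: after shrinking $Z'$ there is a $G_x$-action on an affine scheme $U$ with $X_{Z'}=[U/G_x]$ and a point $\widetilde x\in U$ over $z$ whose stabilizer is all of $G_x$. Passing in a limit to a strictly henselian local $Z'$, the scheme $U$ becomes local with closed point $\widetilde x$: indeed $U\to Z'$ is finite (it is affine and proper, factoring as the finite cover $U\to[U/G_x]$ followed by the proper coarse-space morphism), hence $U$ is a product of local rings, and if it had more than one point over the closed point of $Z'$ then $Z'=U/\!\!/G_x$ would not be local. Now set $H:=(I_{X/Y})_{\widetilde x}$; by (\ref{inertiaeq1}) this equals $\Ker(\phi\colon G_x\to G_y)$, a normal subgroup of $G_x$. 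I would then consider the affine quotient $V:=U/H$ with its residual $G_x/H$-action, the stack $\cY':=[V/(G_x/H)]$, and the morphism $q\colon X_{Z'}=[U/G_x]\to\cY'$ induced by $U\to V$.

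It remains to identify $Y_{Z'}$ with $\cY'$. I would first check that $q$ is a coarsening morphism: pulling back along the smooth cover $V\to\cY'$ gives $[U/G_x]\times_{\cY'}V\cong[U/H]$, which is a total coarsening morphism over $V=[U/H]_\cs$, and being a coarsening morphism is flat-local on the target. Next I would compare geometric relative stabilizers. For $\widetilde u\in U$ with stabilizer $S_{\widetilde u}\subseteq G_x$ one computes that the stabilizer of the image of $\widetilde u$ in $\cY'$ is $S_{\widetilde u}/(S_{\widetilde u}\cap H)$, so $(I_{X_{Z'}/\cY'})_{\widetilde u}=S_{\widetilde u}\cap H$. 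On the other hand $I_{X_{Z'}/Y_{Z'}}$ is a kernel subgroup of $I_{X_{Z'}}$ with fiber $H$ at $\widetilde x$; since $U$ is local, the analysis of kernel subgroups over a strictly henselian local base in Section~\ref{Sec:coarsening} shows that, pulled back to $U$ (where $I_{X_{Z'}}$ becomes $\coprod_{g\in G_x}U^g$), it is the union of the components indexed by $g\in H$, so $(I_{X_{Z'}/Y_{Z'}})_{\widetilde u}=S_{\widetilde u}\cap H$ as well. Thus the coarsening morphisms $X_{Z'}\to Y_{Z'}$ and $q\colon X_{Z'}\to\cY'$ have the same geometric relative stabilizers everywhere, hence coincide by Corollary~\ref{coarsecor}: $Y_{Z'}\cong\cY'$ compatibly with $f$. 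Comparing inertia at $y$ now gives $G_y\cong(I_{\cY'})_{q(\widetilde x)}=G_x/(G_x\cap H)=G_x/H$, under which $\phi$ is the projection $G_x\twoheadrightarrow G_x/H$; this proves $\phi$ surjective with kernel $H$ and rewrites $\cY'=[(U/H)/G_y]$, as required.

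The step I expect to be the main obstacle is controlling the kernel subgroup $I_{X/Y}$ over the whole chart $U$, and not merely at $\widetilde x$; this is exactly why one first arranges, via the local structure theorem together with a henselization/limit argument, that $U$ be local with closed point $\widetilde x$. Once that normalization is in place, the stabilizer computation on $[(U/H)/(G_x/H)]$ and the appeal to the uniqueness of coarsening morphisms are routine.
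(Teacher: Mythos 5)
Your argument is correct, but it identifies $Y$ by a genuinely different mechanism than the paper. The paper works over the strict henselization $\oZ=\Spec(\cO_{Z,z})$ and applies the same ``connected component of an \'etale atlas'' argument \emph{twice}: once to $\oX$ to obtain $\oX=[\oU/G_x]$ and once, independently, to $\oY$ to obtain $\oY=[\oW/G_y]$; the scheme $\oW$ is then computed in one line as $\oW=(\oW\times_\oY\oX)_\cs=[\oU/H]_\cs=\oU/H$, so no uniqueness statement for coarsenings is ever needed. You instead take the presentation $[U/G_x]$ from \cite[Lemma~2.2.3]{Abramovich-Vistoli}, build the candidate coarsening $q\colon[U/G_x]\to[(U/H)/(G_x/H)]$ by hand, and pin down $Y$ via the rigidity statement of Corollary~\ref{coarsecor}. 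The two computations you need for that --- that $(I_{X/\cY'})_{\widetilde u}=S_{\widetilde u}\cap H$ on the nose, and that the kernel subgroup $I_{X/Y}$, known to equal $H$ at $\widetilde x$, equals $S_{\widetilde u}\cap H$ everywhere because after localizing $U$ every component $U^g$ of the inertia passes through $\widetilde x$ --- are both right, and the localization step is exactly what makes the kernel-subgroup lemma bite; this is the real content of your proof and it is sound. The paper's route avoids the stabilizer bookkeeping entirely; yours has the advantage that the presentation of $Y$ is deduced from that of $X$ rather than re-derived from scratch, and it makes the surjectivity of $\phi$ and the identification $G_y=G_x/H$ completely explicit.

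The one step you elide is the return from the strict henselization to an honest \'etale neighborhood: your final identification $Y_{Z'}\cong[(U/H)/G_y]$ is established only over the pro-\'etale limit, whereas the statement asks for a finite-level $Z'\to Z$. This is a routine spreading-out argument (the groups, the chart $U$, and the isomorphism are all of finite presentation, hence descend to some finite stage of the limit), and the paper devotes its closing paragraph to it; your write-up should at least record that this descent is needed.
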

\begin{proof}
The idea is to first study the base change with respect to the strict henselization $\oZ=\Spec(\cO_{Z,z})\to Z$, and then approximate $\oZ$ with a fine enough $Z'$.

Set $\oY=\oZ\times_ZY$ and $\oX=\oZ\times_ZX$. Any \'etale covering $V$ of $\oX$ by a scheme contains a connected component $\oU$ mapping surjectively onto $\oZ$. Then $\oU$ is a finite \'etale covering of $\oX$. Furthermore, $\oU$ is strictly henselian and $\oR=\oU\times_\oX\oU$ is a finite \'etale covering of $\oU$, hence it is of the form $\coprod_{i=1}^n\oU$ and the groupoid $\oR\toto\oU$ reduces to an action of a finite discrete group $G$ on $\oU$. Thus $\oX=[\oU/G]$ and comparing the inertia we obtain that $G=G_x$. By the same argument $\oY=[\oW/G_y]$ for a strictly henselian finite $\oZ$-scheme $\oW$, and then $$\oW=(\oW\times_\oY\oX)_\cs=[\oU/H]_\cs=\oU/H.$$

It remains to return from $\oZ$ to an \'etale covering $Z'\to Z$ via standard approximation arguments. Recall that $\oZ$ is the limit of the family of \'etale neighborhoods of $\oz$. We will take $Z'$ to be a fine enough \'etale neighborhood and set $Y'=Z'\times_ZY$ and $X'=Z'\times_ZX$. Choose an \'etale covering of $X$ by a scheme $V$. We showed that $V\times_Z\oZ$ contains a component $\oU$ finite over $\oZ$, hence already some $V\times_ZZ'$ contains a component $U$ finite over $Z'$. In addition, refining $Z'$ we can achieve that $R=U\times_{Z'}U$ is a disjoint union of copies of $U$, and hence $R\toto U$ reduces to an action of $G_x$ and $[U/G_x]=X'$. Refining $Z'$ further we can also achieve that $[(U/H)/G_y]=Y'$.
\end{proof}

\section{Toroidal stacks and moduli spaces}\label{Sec:toroidal}

\subsection{Toroidal schemes}

\subsubsection{References}
The adopt the terminology of \cite{AT1} concerning toroidal schemes and their morphisms with the only difference that we replace Zariski fine logarithmic structures by the \'etale ones. In other words, in this paper we extend the class of toroidal schemes so that it contains ``toroidal schemes with self-intersections" in the terminology of \cite{KKMS}.

Note that when Kato introduced logarithmically regular logarithmic schemes in \cite{Kato-toric}, he worked with Zariski logarithmic schemes for simplicity. However, \'etale locally any fine logarithmic scheme is a Zariski logarithmic scheme, and this allows to easily extend all results about logarithmic regularity to general fs logarithmic schemes, see \cite{Niziol}.

We will make use of Kummer logarithmically \'etale morphisms, see \cite{Niziol-K-theory-of-log-schemes-I} and Section \ref{Sec:Kummer-topology} below.

\subsubsection{Toroidal schemes}
Now, let us recall the main points quickly. In this paper, a {\em toroidal scheme} $X$ is a logarithmically regular logarithmic scheme $(X,M_X)$ in the sense of \cite{Niziol}. Equivalently, one can represent $X$ by a pair $(X,U)$, where the open subscheme $U$ is the locus where the logarithmic structure is trivial. One reconstructs the monoid by $M_X=\cO_{X\et}\cap i_*(\cO^\times_{U\et})$, where $i\:U\into X$ is the open immersion. Usually, we will denote a toroidal scheme $X$ or $(X,U)$.

\subsubsection{Fans}
Recall that the logarithmic stratum $X(n)$ of a logarithmic scheme $(X,\cM_X)$ consists of all points $x\in X$ with ${\rm rank}(\oM_x)=n$. Here and in the sequel we use the convention that $\oM_x$ denotes $\oM_\ox$ for a geometric point $\ox\to X$ over $x$. In particular, $\oM_x$ is defined up to an automorphism, but its rank is well defined.

If $X$ is a toroidal scheme then each stratum $X(n)$ is regular of pure codimension $n$ (essentially, this is the definition of logarithmic regularity). By the {\em fan} of a toroidal scheme $X$ we mean the set $\Fan(X)$ of all generic points of the logarithmic strata of $X$. Also, let $\eta\:X\to\Fan(X)$ denote the contraction map sending a point $x$ to the generic point of the connected component of the logarithmic stratum containing $x$.

\subsubsection{Morphisms}
A morphism of toroidal schemes $(Y,V)\to(X,U)$ is a morphism of the associated logarithmic schemes. Equivalently one can describe it as a morphism $f\:Y\to X$ such that $f(V)\subseteq U$. An important class of morphisms are the logarithmically smooth ones (called {\em toroidal} in \cite{AT1}). Another important class of morphisms are the strict ones: the morphisms that induce an isomorphism $f^*\cM_X\toisom\cM_Y$.

\subsection{Toroidal actions}

\subsubsection{Definitions}
Assume that a diagonalizable group $G$ acts in a relatively affine manner on a toroidal scheme $(X,U)$, see \cite[Sections 5.1, 5.3]{ATLuna}. Recall that the action is {\em simple} at a point $x\in X$ if the stabilizer $G_x$ acts trivially on $\oM_x$, and the action is {\em toroidal} at $x$ if it is simple at $x$ and $G_x=G_{\eta(x)}$. Note that the latter happens if and only if $G_x$ acts trivially on the connected component of the logarithmic stratum through $x$.

\begin{remark}\label{actionrem}
(i) By \cite[Corollary~3.2.18]{AT1}, the set of points $x\in X$, at which the action is toroidal or simple, is open.

(ii) Let us temporary say that the action is quasi-toroidal at $x$ is $G_x=G_{\eta(x)}$. This notion is not so meaningful due to the following examples:

(1) The openness property fails for quasi-toroidality. For example, let $G=\ZZ/2\ZZ$ act on $X=\Spec(k[x,y])$ by switching the coordinates. Then the action is quasi-toroidal at the origin, but it is not quasi-toroidal at other points of the line $X^G$, which is given by $x=y$. Note that this action is not simple at the origin, so the example is consistent with the openness result for the toroidal locus.

(2) Let $G=\ZZ/4\ZZ$ with a generator $g$ act on $X=\Spec(k[x,y])$ by $gx=y$ and $gy=-x$. Then the action is quasi-toroidal everywhere but is not simple at the origin.
\end{remark}

\subsubsection{The groups $G_x^\tor$}\label{torstabsec}
Let $G_{\oM_x}$ be the subgroup of $G_x$ that stabilizes $\oM_x$. By the {\em toroidal stabilizer} at $x$ we mean the subgroup $G_x^\tor=G_{\eta(x)}\cap G_{\oM_x}$ of the stabilizer $G_x$. Clearly, $G_x^\tor$ is the maximal subgroup of $G_x$ that acts toroidally at $x$.

\begin{lemma}\label{Gtorlem}
If a diagonalizable group $G$ acts in a relatively affine manner on a toroidal scheme $X$ then any point $x\in X$ possesses a neighborhood $X'$ such that $G_x^\tor\cap G_{x'}=G_{x'}^\tor$ for any point $x'\in X'$.
\end{lemma}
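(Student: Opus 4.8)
The plan is to build the neighbourhood $X'$ as an intersection of two $G$-invariant opens, one handling each of the two inclusions $G_x^\tor\cap G_{x'}\subseteq G_{x'}^\tor$ and $G_{x'}^\tor\subseteq G_x^\tor\cap G_{x'}$. First I would reduce to the case where $X=\Spec A$ is affine with an $M$-graded structure, $M=\Hom(G,\gm)$; this is harmless because the action is relatively affine, and it makes the restricted action of every subgroup of $G$ relatively affine as well, so that the openness statement of Remark~\ref{actionrem}(i) applies to such subgroups.

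For the inclusion $G_x^\tor\cap G_{x'}\subseteq G_{x'}^\tor$ I would apply openness of the toroidal locus to the subgroup $H:=G_x^\tor$ acting on $X$. Since $H\subseteq G_x$, $H$ acts trivially on $\oM_x$, and $H\subseteq G_{\eta(x)}$, the point $x$ lies in the toroidal locus of the $H$-action, so by Remark~\ref{actionrem}(i) there is a $G$-invariant open neighbourhood $X_1$ of $x$ on which $H$ acts toroidally at every point. For $x'\in X_1$ the stabiliser $H_{x'}=H\cap G_{x'}=G_x^\tor\cap G_{x'}$ acts trivially on $\oM_{x'}$ and satisfies $H_{x'}=H_{\eta(x')}\subseteq G_{\eta(x')}$, whence $G_x^\tor\cap G_{x'}\subseteq G_{\eta(x')}\cap G_{\oM_{x'}}=G_{x'}^\tor$.

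For the reverse inclusion I would first shrink to a $G$-invariant open $X_2\ni x$ in which the connected component $S$ of the logarithmic stratum through $x$ is the \emph{unique} closed stratum; concretely, pass to a quasi-compact $G$-invariant neighbourhood and delete the (finitely many, $G$-invariant) closed subsets $\overline{S_i}$ attached to stratum components $S_i$ with $x\notin\overline{S_i}$. Then for every $x'\in X_2$, writing $S'$ for the stratum component through $x'$, one has $x\in\overline{S'}$, hence $S\subseteq\overline{S'}$; in particular $\eta(x)$ is a specialisation of $\eta(x')$, and the cospecialisation map identifies $\oM_{x'}$ with a quotient $\oM_x/F$ of $\oM_x$ by a face $F$, where $F$ is the sharp monoid at $x$ of the induced toroidal structure on the stratum closure $\overline{S'}$. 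Now fix $x'\in X_2$ and set $K:=G_{x'}^\tor$. As $K$ acts toroidally at $x'$, it acts trivially on $S'$ (by the characterisation of toroidal actions recalled after the definition), hence on $\overline{S'}$, its fixed locus being closed. Therefore $\eta(x)\in S\subseteq\overline{S'}$ is $K$-fixed, so $K\subseteq G_{\eta(x)}$; and $K$ acts trivially on the toroidal structure of $\overline{S'}$, hence trivially on $F$. Since $K$ also acts trivially on $\oM_{x'}=\oM_x/F$ and $K$ acts on $\oM_x$ through a finite group of automorphisms, each element of $K$ induces an automorphism of the lattice $\oM_x^\gp$ that is trivial on $F^\gp$ and on $\oM_x^\gp/F^\gp$ and is of finite order, hence is the identity; thus $K\subseteq G_{\oM_x}$. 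Together with $K\subseteq G_{\eta(x)}$ this gives $G_{x'}^\tor=K\subseteq G_{\eta(x)}\cap G_{\oM_x}=G_x^\tor$, and as $K\subseteq G_{x'}$ we conclude $G_{x'}^\tor\subseteq G_x^\tor\cap G_{x'}$. Taking $X'=X_1\cap X_2$ proves the lemma.

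I expect the main obstacle to be this reverse inclusion, and inside it the passage from triviality of the $K$-action on $\oM_{x'}$ to triviality on the a priori larger monoid $\oM_x$: this is exactly where the preparatory shrinking to $X_2$ (arranging $x\in\overline{S'}$ for all nearby $x'$) is needed, since it exhibits $K$ as acting trivially on the whole stratum closure $\overline{S'}$, hence on its toroidal structure, which pins down the $K$-action on the face $F$; the remaining lattice computation is the familiar fact that a finite-order unipotent integer matrix is the identity. The toroidal inputs used — that the closure of a stratum is again toroidal with sharp monoid at $x$ a face of $\oM_x$, and the face description of cospecialisation maps — are standard, cf.\ \cite{AT1}.
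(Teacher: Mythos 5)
Your proof is correct, but it reaches the two inclusions by a different route than the paper, which is worth spelling out. The paper also shrinks $X$ by deleting the closures of the fan points that do not generize $x$ (your $X_2$), but then it reduces everything to the case $x,x'\in\Fan(X)$ via $G_{x'}^\tor=G_{\eta(x')}^\tor$, where the statement becomes $G_{\oM_x}\cap G_{x'}=G_{\oM_{x'}}$. There the ``easy'' inclusion $G_x^\tor\cap G_{x'}\subseteq G_{x'}^\tor$ is a one-line consequence of the surjectivity and equivariance of the cospecialization $\oM_x\to\oM_{x'}$, whereas you obtain it by invoking the openness of the toroidal locus for the subgroup $H=G_x^\tor$ (Remark~\ref{actionrem}(i), i.e.\ \cite[Corollary~3.2.18]{AT1}) --- heavier machinery, but sound, and it has the small advantage of not requiring the shrinking for that direction. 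For the hard inclusion the paper lifts an $\oM_x$-element not fixed by $g$ to a monomial $t$, argues $t\in m_{x'}$ from $g\in G_{x'}$, and derives a contradiction with the triviality of the action on $\oM_{x'}$; you instead identify the kernel face $F$ with the characteristic monoid of the stratum closure $\overline{S'}$, on which $K=G_{x'}^\tor$ acts trivially because $\overline{S'}$ lies in the scheme-theoretic fixed locus $X^K$ (here it is essential, and you use correctly, that ``stabilizer'' means the scheme-theoretic stabilizer of a diagonalizable group, so $K\subseteq G_{\eta(x')}$ really does kill the action on $\cO_{\overline{S'}}$ and not merely fix points), and then conclude by the finite-order-unipotent-is-identity computation on $\oM_x^\gp$. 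This is a legitimate and arguably more transparent argument, at the cost of invoking without proof the standard facts that stratum closures of a log regular scheme are toroidal with characteristic monoid the kernel face and that cospecializations are face quotients. Two cosmetic caveats: the $G$-invariance you claim for $X_1$ and $X_2$ is neither needed nor, for $X_2$ as constructed, automatic (deleting $\overline{S_i}$ with $x\notin\overline{S_i}$ need not produce a $G$-stable open, since $G$ may permute stratum components); and the finiteness of the set of deleted components requires working in a quasi-compact (locally noetherian) neighborhood, an implicit assumption shared with the paper's proof.
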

\begin{proof}
Let $X'$ be obtained by removing from $X$ the Zariski closures of all points $\veps\in\Fan(X)$ which are not generizations of $x$. Thus, $\eta(x')$ is a generization of $\eta(x)$ for any $x'\in X'$. Note that $\oM_{x'}=\oM_{\eta(x')}$ since $\oM_X$ is locally constant along logarithmic strata. Therefore $G_{x'}^\tor=G_{\eta(x')}^\tor$, and it suffices to deal with the case when $x,x'\in\Fan(X)$. Then $x'$ specializes to $x$ and our claim reduces to the check that $G_{\oM_{x}}\cap G_{x'}=G_{\oM_{x'}}$. Since the cospecialization $\phi\:\oM_x\to\oM_{x'}$ is surjective, $G_{\oM_{x}}\cap G_{x'}\subseteq G_{\oM_{x'}}$. Conversely, assume that $g\in G_{\oM_{x'}}\subseteq G_{x'}$ but $g\notin G_{\oM_x}$. Choose any $l\in\oM_x$ not stabilized by $g$ and lift it to an element $t\in\cO_{X,x}$. Then $t\in m_{x'}$ because $g\in G_{x'}$ and hence $\phi(l)\neq 1$. This implies that $g$ acts non-trivially on $\oM_{x'}$, a contradiction.
\end{proof}

\subsubsection{The quotients}
Toroidal stabilizers can also be characterized in terms of the quotient morphisms. To obtain a nice picture we restrict to \'etale groups.

\begin{lemma}\label{quotlem}
Assume that an \emph{\'etale} diagonalizable group $G$ acts in a relatively affine manner on a toroidal scheme $(X,U)$ and $x\in X$ is a point. Then $G_x^\tor$ is the maximal subgroup $H$ of the stabilizer $G_x$ such that if $q\:X\to X/H$ is the quotient morphism then the pair $(X/H,U/H)$ is toroidal at $q(x)$ and the morphism $(X,U)\to(X/H,U/H)$ is Kummer logarithmically \'etale at $x$.
\end{lemma}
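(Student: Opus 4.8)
The plan is to analyze the local structure of the quotient $q\:X\to X/H$ for a subgroup $H\subseteq G_x$ and to match it against the definition of $G_x^\tor=G_{\eta(x)}\cap G_{\oM_x}$. Since the statement is local at $x$, I would first pass to an affine étale neighborhood on which $X=\Spec(A)$ with $G$ acting and the log structure coming from a fine chart $P\to A$; because $G$ is étale (hence, being diagonalizable, a constant finite group of order prime to the residue characteristics) all quotients $X\to X/H$ exist as relatively affine toroidal-type quotients and behave well. The key computation is to identify, for a subgroup $H\subseteq G_x$, when the induced log structure on $X/H$ at $q(x)$ is again logarithmically regular and when $(X,U)\to(X/H,U/H)$ is Kummer log étale at $x$.

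The heart of the argument is the following two-sided characterization, which I would prove as the main step. First, $(X,U)\to(X/H,U/H)$ being Kummer log étale at $x$ forces the map on sharpened monoids $\oM_{q(x)}\to\oM_x$ to be Kummer (injective with a power of each element of $\oM_x$ coming from $\oM_{q(x)}$) and, at the level of the induced morphism of fans, to be a localization/finite-index inclusion with no new strata created; combined with log regularity of the source this is equivalent to $H$ acting trivially on $\oM_x$ \emph{and} $H\subseteq G_{\eta(x)}$ — i.e. $H$ acts toroidally at $x$ — because any element of $H$ moving $\oM_x$ would change the monoid chart in a non-Kummer way, and any element of $H$ lying outside $G_{\eta(x)}$ would act nontrivially on the stratum through $x$, lowering the dimension of the corresponding stratum of the quotient and destroying log regularity (equivalently: the stratum would fail to have the correct codimension). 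Conversely, if $H$ acts toroidally at $x$, then étale-locally the action is the restriction of a toric action on a toric chart fixing the open stratum pointwise and acting on the torus factor only, so $X/H$ is obtained by a sublattice inclusion; this is the standard picture making $(X/H,U/H)$ toroidal at $q(x)$ and $X\to X/H$ Kummer log étale at $x$ (it is, locally, the quotient of a torus by a finite diagonalizable subgroup, which is Kummer log étale). Here I would lean on Lemma~\ref{Gtorlem}, on Remark~\ref{actionrem}(i) for openness of the toroidal locus, and on the Luna-type slice description of relatively affine diagonalizable actions from \cite{ATLuna}.

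Given this characterization, the lemma follows formally: the set of subgroups $H\subseteq G_x$ that act toroidally at $x$ has a unique maximal element, namely $G_x^\tor=G_{\eta(x)}\cap G_{\oM_x}$ by its very definition as the maximal subgroup acting toroidally at $x$ (stated just before the lemma), and by the characterization this is exactly the maximal $H$ for which the quotient condition holds. The main obstacle I anticipate is the ``only if'' direction — extracting from the bare hypothesis ``$X\to X/H$ is Kummer log étale at $x$ and $X/H$ is toroidal at $q(x)$'' the two separate conclusions $H\subseteq G_{\oM_x}$ and $H\subseteq G_{\eta(x)}$; this requires carefully tracking how the sharpened stalk and the stratification transform under a finite diagonalizable quotient, and ruling out the pathologies in Remark~\ref{actionrem}(ii) (where quasi-toroidality holds but simplicity fails, so the quotient is not log regular or the map is not Kummer). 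Everything else is either a standard approximation/descent reduction or a direct toric computation.
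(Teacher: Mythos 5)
Your overall structure is the right one, and it matches the paper's: prove that for a subgroup $H\subseteq G_x$ the two quotient conditions hold at $x$ if and only if $H$ acts toroidally at $x$, and then invoke the fact that $G_x^\tor$ is by definition the maximal such subgroup. Your forward direction (toroidal action $\Rightarrow$ toroidal quotient and Kummer log \'etale quotient map) is exactly what the paper does, except that the paper simply cites \cite[Theorem~3.3.12]{AT1} where you sketch the toric local picture. Your derivation of $H\subseteq G_{\oM_x}$ from the Kummer condition is also essentially the paper's argument, just less precise: the clean version is that Kummer-ness gives $n\oM_x\subseteq\oM_{q(x)}$ for some $n\ge 1$, and since $H$ acts trivially on $\oM_{q(x)}$ it acts trivially on $n\oM_x$, hence on $\oM_x$.

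The genuine gap is in your justification of $H\subseteq G_{\eta(x)}$. You claim that an element of $H$ outside $G_{\eta(x)}$ would ``lower the dimension of the corresponding stratum of the quotient'' so that ``the stratum would fail to have the correct codimension.'' This is false: a quotient by a finite group preserves dimension, so the image $q(C)$ of the logarithmic stratum $C$ through $x$ has the same dimension as $C$, and the dimension count in the definition of logarithmic regularity does not fail. (In the paper's own Example~\ref{nonexam} --- $X=\Spec(k[x,y])$ with boundary $(x)$ and $\ZZ/2\ZZ$ acting by sign on both coordinates --- what actually breaks in the quotient is the \emph{regularity} of the scheme cut out by the monomial ideal, which acquires a non-reduced structure; its dimension is unchanged.) The argument the paper uses is different and cleaner: if $H\not\subseteq G_{\eta(x)}$ then the generic stabilizer $H\cap G_{\eta(x)}$ of $C$ is a proper subgroup of the stabilizer $H=H_x$, so the induced map $C\to q(C)$ has generic degree $[H:H\cap G_{\eta(x)}]>1$ while the fiber over $q(x)$ is a single point; hence $C\to q(C)$ is ramified at $x$. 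But a Kummer logarithmically \'etale morphism induces \'etale morphisms between logarithmic strata, a contradiction. You should replace your dimension argument with this ramification argument (or with a correct analysis of why log regularity of the quotient fails, which would have to go through non-regularity of the stratum scheme rather than a codimension defect).
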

\begin{proof}
If $H\subseteq G_x^\tor$, that is $H$ acts toroidally at $x$, then the quotient is as asserted by \cite[Theorem~3.3.12]{AT1}. Conversely, assume that $H$ is such that $q$ is Kummer logarithmically \'etale at $x$. Then $\oM_{q(x)}$ contains $n\oM_x$ for a large enough $n$, and since $G$ acts trivially on $\oM_{q(x)}$, it acts trivially on $\oM_x$. So, the action is simple in a neighborhood of $x$, and we have that $G_x^\tor=G_\eta$. Let $C$ be the connected component of the logarithmic stratum containing $x$. If $H\nsubseteq G_\eta$ then the induced morphism $C\to q(C)$ is ramified at $x$ because $\eta$ is the generic point of $C$. But we assumed that $q$ is logarithmically \'etale, and hence $C\to q(C)$ is \'etale. This shows that $H\subseteq G_\eta=G_x^\tor$, as required.
\end{proof}

\subsubsection{Functoriality}\label{functsec}
Assume that toroidal schemes $X$ and $Y$ are provided with relatively affine actions of diagonalizable groups $G$ and $H$, respectively, $\lambda\:H\to G$ is a homomorphism, and $f\:Y\to X$ is a $\lambda$-equivariant morphism. We want to study when the toroidal inertia groups are functorial in the sense that $H_y^\tor\into \lambda^{-1}(G_x^\tor)$ for any $y\in Y$ with $x=f(y)$. By \cite[Lemma 3.1.6(i)]{AT1}, strict morphisms respect simplicity of the action. The toroidal property is more subtle: the functoriality of toroidal inertia may fail even for surjective fix-point reflecting strict morphisms.

\begin{example}\label{nonexam}
Let $X=\Spec(k[x,y])$ with the toroidal structure $(x)$ and $G=\ZZ/2\ZZ$ acting by the sign both on $x$ and $y$. Then the action is not toroidal at the origin $O$, so $G_{X,O}^\tor=1$. Let $Y$ be the $x$-axis $\Spec(k[x])$ with the toroidal structure $(x)$. Then $Y$ embeds $G$-equivariantly into $X$, but the action is toroidal on $Y$ and hence $G_{Y,O}^\tor=G$ is not mapped into $G_{X,O}^\tor$. Furthermore, if $X_0=X\setminus\{O\}$ then $X_0\coprod Y\to X$ is a surjective fix-point reflecting strict morphism which is not functorial for the toroidal inertia.
\end{example}

\begin{remark} As this example shows, the statement in \cite[Lemma 3.1.9(ii)]{AT1} needs to be corrected to read ``and the converse is true if $f$ is \emph{\'etale} and surjective", and the proof should  read ``Hence (ii) follows from (i), Lemma 3.1.6(i) and \emph{\'etale descent"}. This does not affect other results of that paper, since only the direct implication was used. Still, for the inverse implication the \'etale assumption can be weakened. For example, it suffices to assume that the morphism respects the fans (see below).
\end{remark}

The problem in Example \ref{nonexam} is that $O$ is in the fan of $Y$ but not in the fan of $X$, and the stabilizer drops at $\eta_X(O)$. This motivates the following definition: a $\lambda$-equivariant morphism $f\:Y\to X$ is said to {\em respect the fans} if $f(\Fan(Y))\subseteq\Fan(X)$. Also, we say that $f$ is {\em injective on monoids} if the homomorphisms $\oM_{f(y)}\to\oM_y$ are injective.

\begin{lemma}\label{functlem0}
Let $f\:Y\to X$ be a $\lambda$-equivariant morphism as in \S\ref{functsec}, and let $y\in Y$ be a point with $x=f(y)$ and the induced homomorphism $\lambda_y\:H_y\to G_x$.

(i) Assume that $f$ respects the fans and is injective on monoids locally at $y$ (for instance, this is satisfied when $f$ is logarithmically flat at $y$). Then $H_y^\tor\into \lambda_y ^{-1}(G_x^\tor)$.

(ii) Assume that $f$ is fix-point reflecting and one of the following conditions holds: (a) $f$ is strict at $y$, (b) $f$ respects the fans and the action of $H$ is simple at $y$. Then $\lambda_y^{-1}(G_x^\tor)\into H_y^\tor$.
\end{lemma}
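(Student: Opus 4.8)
The plan is to work entirely with the explicit descriptions $G_x^\tor=G_{\eta(x)}\cap G_{\oM_x}$ and $H_y^\tor=H_{\eta(y)}\cap H_{\oM_y}$ from \S\ref{torstabsec}, and to translate each geometric hypothesis on $f$ into a statement about how the stabilizers $H_y\to G_x$, the cospecialization maps on monoids, and the fans interact. Since everything is about a single point $y$ and its image $x$, and $\lambda_y\:H_y\to G_x$ is the induced homomorphism, both assertions are really containments of subgroups of $H_y$; I would fix $h\in H_y$ and chase what it means for $\lambda_y(h)$ to lie in $G_{\eta(x)}$ and in $G_{\oM_x}$.

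\emph{Part (i).} First I would reduce the two sufficient conditions to the single hypothesis ``$f$ respects the fans and is injective on monoids at $y$'': logarithmic flatness at $y$ gives injectivity of $\oM_{f(y)}\to\oM_y$ because the chart map on monoids is injective for a log-flat morphism, and it forces $f(\eta(y))$ to be the generic point of a stratum of $X$, i.e. $f$ respects the fans (this uses that log-flat morphisms send logarithmic strata to logarithmic strata, which follows from the structure theory of \cite{AT1}). Then, given $h\in H_y^\tor$, I must show $\lambda_y(h)\in G_{\eta(x)}$ and $\lambda_y(h)\in G_{\oM_x}$. For the first: $h\in H_{\eta(y)}$ means $h$ fixes $\eta(y)$; since $f$ respects the fans, $x=f(y)$ and $\eta(x)=f(\eta(y))$ lie in the same stratum, and $\lambda$-equivariance carries the fixed point $\eta(y)$ to a fixed point of $\lambda_y(h)$ over $\eta(x)$, so $\lambda_y(h)\in G_{\eta(x)}$. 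For the second: the cospecialization square relating $\oM_x\to\oM_y$ (injective, by hypothesis) to the $G_x$- and $H_y$-actions commutes via $\lambda_y$; since $h$ acts trivially on $\oM_y$ and $\oM_x\hookrightarrow\oM_y$ is injective, $\lambda_y(h)$ acts trivially on $\oM_x$, i.e. $\lambda_y(h)\in G_{\oM_x}$. Hence $\lambda_y(h)\in G_x^\tor$ and $h\in\lambda_y^{-1}(G_x^\tor)$.

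\emph{Part (ii).} Here $f$ is fix-point reflecting, so $H_y=\lambda_y^{-1}(G_x)$, and I take $h\in H_y$ with $\lambda_y(h)\in G_x^\tor$, aiming to show $h\in H_{\eta(y)}\cap H_{\oM_y}$. That $h\in H_{\eta(y)}$: $\lambda_y(h)\in G_{\eta(x)}$ fixes $\eta(x)$, and since $f$ is fix-point reflecting the locus $Y^h$ maps into $X^{\lambda_y(h)}$ compatibly, so $\eta(y)$ — which maps into the closure of $\eta(x)$ in case (b), or equals a point mapping to $\eta(x)$ in case (a) — is fixed by $h$; in both cases one gets $h\in H_{\eta(y)}=H_{\eta(y)}^{}$, using in case (a) strictness (so $\eta(y)$ is a fan point iff $x$ is) and in case (b) that simplicity of the $H$-action at $y$ forces $H_y=H_{\eta(y)}$ already. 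That $h\in H_{\oM_y}$: in case (a), strictness gives $\oM_x\toisom\oM_y$ $H_y$-equivariantly via $\lambda_y$, so $\lambda_y(h)\in G_{\oM_x}$ transports to $h\in H_{\oM_y}$; in case (b), simplicity of the $H$-action at $y$ means $H_y$ already acts trivially on $\oM_y$, so $H_{\oM_y}=H_y$ and there is nothing to prove. Combining, $h\in H_y^\tor$.

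\emph{Main obstacle.} The routine-looking but genuinely delicate point is the ``fan'' bookkeeping in part (ii)(b): one must be sure that ``$f$ respects the fans'' plus ``$H$ acts simply at $y$'' really does force $\eta(y)\in Y^h$ from $\eta(x)\in X^{\lambda_y(h)}$, and more basically that simplicity at $y$ gives $G_x^\tor=G_{\eta(x)}$-type control on the target side — Example \ref{nonexam} shows exactly how this fails without the fan hypothesis, since there $O$ is a fan point of $Y$ but $\eta_X(O)$ has smaller stabilizer. So the heart of the argument is checking that the fan-respecting hypothesis rules out precisely the stabilizer-drop phenomenon of that example; everything else is a diagram chase with cospecialization maps and the definitions of $G^\tor$.
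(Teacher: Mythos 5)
Part (i) of your proposal is correct and is essentially the paper's argument: respecting the fans gives $f(\eta_Y(y))=\eta_X(x)$, whence $H_{\eta_Y(y)}\into\lambda_y^{-1}(G_{\eta_X(x)})$, and injectivity of $\oM_x\into\oM_y$ gives $H_{\oM_y}\into\lambda_y^{-1}(G_{\oM_x})$; intersecting the two yields the claim.

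Part (ii), however, rests on two concretely false steps, with the correct ingredients attached to the wrong cases. First, in case (a) you assert that $\eta_Y(y)$ ``equals a point mapping to $\eta_X(x)$,'' i.e.\ that a strict morphism satisfies $f(\eta_Y(y))=\eta_X(x)$. This is false, and the paper's own Example \ref{nonexam} refutes it: the inclusion of the $x$-axis is strict and fix-point reflecting, yet $f(\eta_Y(O))=O$ while $\eta_X(O)$ is the generic point of the $y$-axis. (The parenthetical ``$\eta(y)$ is a fan point iff $x$ is'' fails in the same example.) What strictness actually gives is weaker but sufficient: it preserves the rank of $\oM$, so $f$ carries the stratum component $C_y$ into $C_x$; hence $f(\eta_Y(y))$ is a specialization of $\eta_X(x)$, and since fixed loci are closed one gets $G_{\eta_X(x)}\into G_{f(\eta_Y(y))}=H_{\eta_Y(y)}$ by fix-point reflection. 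This ``maps into the closure of $\eta_X(x)$'' argument is exactly the one you wrote down --- but you assigned it to case (b) instead of case (a). Second, in case (b) you claim that simplicity of the action at $y$ forces $H_y=H_{\eta_Y(y)}$. That is the definition of the action being \emph{toroidal} at $y$, not simple; if simplicity implied it, every simple action would be toroidal and (ii)(b) would be vacuous. The correct division of labor is: simplicity disposes only of the $H_{\oM_y}$ condition (as you correctly note a few lines later), while $h\in H_{\eta_Y(y)}$ follows from the fan-respecting hypothesis --- which in case (b) really does give $f(\eta_Y(y))=\eta_X(x)$ --- combined with fix-point reflection to obtain $H_{\eta_Y(y)}=G_{\eta_X(x)}\supseteq G_x^\tor$. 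With these two steps restored to their proper cases and the false intermediate claims deleted, your argument coincides with the paper's.
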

\begin{proof}
(i) By our assumption, $f(\eta_Y(y))=\eta_X(x)$ and hence $H_{\eta_Y(y)}\into \lambda_y ^{-1}(G_{\eta_X(x)})$. Also, since $\oM_x\into\oM_y$ we have that $H_{\oM_y}\into \lambda_y ^{-1}(G_{\oM_x})$. The two inclusions imply that $H_y^\tor\into \lambda_y ^{-1}(G_x^\tor)$.

(ii) We can identify $G_x$ and $H_y$ via $\lambda_y$. In case (a), $G_{\oM_x}=H_{\oM_y}$. If $C_x$ and $C_y$ are the connected components of the logarithmic strata through $x$ and $y$ then $f(C_y)\subseteq C_x$, and hence $G_{\eta_X(x)}\into G_{f(\eta_Y(y))}=H_{\eta_Y(y)}$. The claim follows.

In case (b), we should prove that $G_x^\tor$ acts toroidally at $y$. Since the action is simple at $y$, it suffices to check that $G_x^\tor\into G_{\eta_Y(y)}$. By our assumptions, $f(\eta_Y(y))=\eta_X(x)$ and hence $G_{\eta_Y(y)}=G_{\eta_X(x)}$. But $G_x^\tor\into G_{\eta_X(x)}$ by definition.

\end{proof}

\begin{remark}\label{functrem}
(i) It is easy to see that logarithmically smooth (even logarithmically flat, see \cite[Definition~2.1]{Niziol-K-theory-of-log-schemes-I}) morphisms of fs logarithmic schemes respect fans and are injective on monoids.

(ii) The assumption that the action is simple in Lemma~\ref{functlem0}(ii)(b) certainly simplifies the proof, but we do not know if it is necessary.
\end{remark}

\subsubsection{Toroidal inertia}\label{Sec:tor-inertia-quotient}
For the sake of completeness we note that the groups $G_x^\tor$ glue to a {\em toroidal inertia} group scheme $I_X^\tor$ over the $G$-scheme $X$. Namely, if $\oveps$ denotes the Zariski closure of $\veps$ then $$I_X^\tor:=\cup_{\veps\in\Fan(X)}\ G_\veps^\tor\times \oveps$$ is a subgroup of $G\times X$, which is obviously contained in $I_X$. Since $G$ is discrete there is no ambiguity about the scheme structure: $G\times X=\coprod_{g\in G}X$ and $I_X^\tor=\coprod_{g\in G}X^g$, where $X^g$ is the closed subscheme fixed by $g$. The functoriality results of Lemma~\ref{functlem0} extend to the toroidal inertia schemes in the obvious way

\subsection{Toroidal stacks}
Using descent, the notions of toroidal schemes and morphisms can be easily extended to Artin stacks, see \cite[Section~5]{Olsson-logarithmic}. We will stick to the case of DM stacks, since only they show up in our applications. A minor advantage of this restriction is that one can work with the \'etale topology instead of the lisse-\'etale one.

\subsubsection{Logarithmic structures on stacks}
By a logarithmic structure on an DM stack $X$ we mean a sheaf of monoids $M_X$ on the \'etale site $X\et$ and a homomorphism $\alp_X\:M_X\to\cO_{X\et}$ inducing an isomorphism $M^\times_X\toisom\cO^\times_{X\et}$. If $p_{1,2}\:X_1\toto X_0$ is an atlas of $X$ then giving a logarithmic structure $M$ is equivalent to giving compatible logarithmic structures $M_{X_i}$ in the sense that $p_i^{-1}M_{X_0}=M_{X_1}$ for $i=1,2$. We say that $(X,M_X)$ is fine, saturated, etc., if $(X_0,M_{X_0})$ is so. We use here that these properties of $M_{X_0}$ are \'etale local on $X_0$, and hence are independent of the choice of the atlas.

\subsubsection{Logarithmic stacks and atlases}
By a logarithmic stack $(X,M_X)$ we mean a stack provided with a logarithmic structure. In this case, for any smooth atlas $X_1\toto X_0$ of $X$ we provide $X_0$ and $X_1$ with the pullbacks of $M_X$ and say that $(X_1,M_{X_1})\toto(X_0,M_{X_0})$ is an atlas of $(X,M_X)$. Indeed, $\alp_X\:M_X\to\cO_{X\et}$ is uniquely determined by this datum.

\subsubsection{Toroidal stacks}
A logarithmic stack $(X,M_X)$ is {\em logarithmically regular} or {\em toroidal} if it admits an atlas such that $(U,M_U)$ is toroidal. In this case any atlas is toroidal because logarithmic regularity is a smooth-local property, see \cite[Proposition~7.5.46]{Gabber-Ramero}.

Furthermore, the triviality loci $U_i\subseteq X_i$ of $M_{X_i}$ are compatible with respect to the strict morphisms $p_{1,2}$, hence $U_0$ descends to an open substack $i\:U\into X$ that we call the triviality locus of $M_X$. Furthermore, when $(X,M_X)$ is logarithmically regular, $U$ determines the logarithmic structure by $M_X=\cO_{X\et}\cap i_*(\cO^\times_{U\et})$ because the same formulas reconstruct $M_{X_i}$. In the sequel, we will often view toroidal stacks as pairs $(X,U)$. Again, a morphisms $(Y,V)\to(X,U)$ of toroidal stacks is nothing else but a morphism $f\:Y\to X$ of stacks such that $V\into f^{-1}(U)$.

\subsection{Total toroidal coarsening}\label{torcoarsesec}
Let $(X,U)$ be a toroidal DM stack.

\subsubsection{Toroidal coarsening morphisms}
We say that a coarsening morphism $f\:X\to Y$ is {\em toroidal} if $f(|U|)$ underlies an open substack $V\into Y$, the pair $(Y,V)$ is a toroidal stack, and the morphism $(X,U)\to(Y,V)$ is Kummer logarithmically \'etale. If exists, the final object of the category of toroidal coarsening morphisms of $X$ will be called the {\em total toroidal coarsening} of $X$ and denoted $\phi_X\:X\to X_\tcs$.

Our next goal is to construct $X_\tcs$. By Corollary~\ref{coarsecor}, $\phi_X$ is determined by the geometric points of its inertia, so our plan is as follows. First, we will extend the notion of toroidal stabilizers from \S\ref{torstabsec} to geometric points of stacks, and then we will use them to construct $\phi_X$ so that, indeed, $(I_{\phi_X})_x$ is the toroidal stabilizer of $x$. In this context, $I_{\phi_X}$ is the generalization to toroidal stacks of the toroidal inertia $I_X^\tor$ from \S\ref{Sec:tor-inertia-quotient}.

\subsubsection{Toroidal inertia}\label{toroidalinertia}
Let $Z=X_\cs$. By Theorem~\ref{coarseth}, a geometric point $x\to X$ possesses a representable \'etale neighborhood $X'=X\times_ZZ'$ of the form $[X'_0/G_x]$. Pulling back the toroidal structure of $X$ we obtain a $G_x$-equivariant toroidal structure on $X'_0$ and we take $G_{X'_0,x}^\tor$ to be the maximal subgroup of $G_x$ acting toroidally along $x$. By the following lemma, we can denote this group simply $G_x^\tor$. It will be called the {\em toroidal stabilizer} at $x$. Note also that $\oM_{X,x}=\oM_{X'_0,x}$, and hence we obtain an action of $G_x$ on $\oM_x$.

\begin{lemma}\label{Gtorstacklem}
With the above notation, the group $G_{X'_0,x}^\tor$ and the action of $G_x$ on $\oM_x$ are independent of the choice of quotient presentation $X'=[X'_0/G_x]$.
\end{lemma}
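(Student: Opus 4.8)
The plan is to reduce the statement to a comparison between two \'etale presentations of the same neighborhood of $x$ and then invoke the rigidity of inert morphisms established in Section~\ref{Sec:basic-inertia}. Suppose $X'=[X'_0/G_x]$ and $X''=[X''_0/G_x]$ are two representable \'etale neighborhoods of $x$ obtained from Theorem~\ref{coarseth} (possibly after shrinking $Z'$ to a common \'etale neighborhood $Z'''$ of $z$, which is harmless since $G_{X'_0,x}^\tor$ is defined in terms of an arbitrarily small neighborhood of $x$ by Lemma~\ref{Gtorlem}). Forming the fiber product $X'''=X'\times_X X''$ over the common base and pulling back along it, it suffices to compare each of $X'_0$, $X''_0$ with a presentation of $X'''$: concretely, $X'''_0:=X'_0\times_X X''=X'_0\times_X X''_0/G_x$ carries a $G_x$-action (via the first factor) realizing $X'''=[X'''_0/G_x]$, and the projection $X'_0\times_X X''_0 \to X'_0$ is a $G_x$-equivariant \'etale covering. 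So the whole question reduces to the case of two presentations $[P/G_x]=X'=[P'/G_x]$ related by a $G_x$-equivariant \'etale morphism $P'\to P$ that is fix-point reflecting at the chosen preimage of $x$.

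First I would note that $\oM_{X,x}$, being a stalk on the \'etale site, is computed by the strict henselization at $x$ and is therefore canonically independent of the presentation; the only content is the $G_x$-action on it. That action is determined by how $G_x$ permutes the \'etale-local sections of $M_X$ near $x$, which is intrinsic to the stack $X$ together with the point $x$: indeed, since $X'_0\to X$ is inert and hence has no nontrivial automorphisms (Section~\ref{Sec:basic-inertia}), the isomorphism $X'_0\times_X X'_0\toisom G_x\times X'_0$ identifying the groupoid with the $G_x$-action is unique, and this is precisely the datum that produces the $G_x$-action on the stalk $\oM_{X'_0,x}=\oM_{X,x}$. Therefore any two presentations induce the same action after identifying $G_x$ with $\Aut_X(x)$, and the identification of the two copies of $G_x$ is itself canonical (it is the stabilizer group of $x$ in $X$). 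This handles the second assertion.

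For the first assertion, I would use the characterization of $G_x^\tor$ as $G_{\eta(x)}\cap G_{\oM_x}$ from \S\ref{torstabsec}, applied on the strict henselization. The subgroup $G_{\oM_x}$ is intrinsic by the previous paragraph. For $G_{\eta(x)}$: the logarithmic stratum through $x$ and its connected component are defined in terms of the rank of $\oM$, hence pull back compatibly along the strict \'etale covering $P'\to P$, and the $G_x$-action on the local branch of the stratum is again rigid because $P'\to P$ is inert and fix-point reflecting at $x$. Thus the subgroup of $G_x$ fixing that component pointwise — which equals $G_{\eta(x)}$ by the remark after the definition of toroidal action — is the same for both presentations. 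Intersecting, $G_x^\tor$ is well defined. Alternatively one can invoke Lemma~\ref{Gtorlem} directly: since a $G_x$-equivariant \'etale fix-point-reflecting map identifies $G_{x'}^\tor$-data in small neighborhoods, $G_{X'_0,x}^\tor$ depends only on an \'etale-local germ of the $G_x$-scheme, which the two presentations share after base change to $X'''_0$.

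The main obstacle I anticipate is bookkeeping the $G_x$-identifications: a priori the two presentations come with their own copies of $G_x$ and the equivalence $X'\simeq X''$ need not be strictly compatible with naming the group, only up to conjugation by an element of $G_x$ (equivalently, up to the choice of geometric point of $X'_0$ over $x$). Since $G_x$ is diagonalizable and abelian this conjugation is trivial, so in our setting the ambiguity disappears; but the argument should explicitly use commutativity of $G_x$ (a consequence of diagonalizability), or else phrase everything in terms of the canonical group $\Aut_X(x)$ and the canonical isomorphism $X'_0\times_X X'_0\toisom \Aut_X(x)\times X'_0$ coming from inertness, which sidesteps the issue entirely. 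The remaining steps — the compatibility of strata, stalks, and fix-point loci under strict \'etale base change — are standard and follow from the sheaf-theoretic definitions together with \'etale descent.
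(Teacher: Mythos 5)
Your proof is correct and follows essentially the same route as the paper: reduce to two presentations related by a strict \'etale $G_x$-equivariant \emph{inert} (hence fix-point reflecting) morphism and observe that such a morphism preserves toroidal stabilizers and the action on $\oM_x$. The paper gets there slightly more economically, by refining the \'etale neighborhood $Z''\to Z'$ and pulling back ($X''_0=X'_0\times_{X'}X''$), and then simply cites Lemma~\ref{functlem0}(i) and (ii)(a) for the key step that you re-derive by hand; note that your ``alternative'' citation should be Lemma~\ref{functlem0} rather than Lemma~\ref{Gtorlem}, which compares points within one $G$-scheme rather than two presentations. Your explicit treatment of the identification of the two copies of $G_x$ (via rigidity of inert morphisms and commutativity of the diagonalizable group) is a point the paper's nested-refinement reduction lets it avoid, and is a worthwhile addition rather than a divergence.
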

\begin{proof}
Given a finer \'etale neighborhood $Z''\to Z'$ of the image of $x$ in $Z$, set $X''=X\times_ZZ''$ and $X''_0=X'_0\times_{X'}X''$. In particular, $X''=[X''_0/G_x]$. It suffices to check that $G_{X'_0,x}^\tor=G_{X''_0,x}^\tor$. Being a base change of a morphism of algebraic spaces, the morphism $X''\to X'$ is inert, and it follows that the strict \'etale $G_x$-equivariant morphism $X''_0\to X'_0$ is inert. Therefore, $G_{X'_0,x}^\tor=G_{X''_0,x}^\tor$ by Lemma~\ref{functlem0}. Also, it is clear that $\oM_{X'_0,x}=\oM_{X''_0,x}$ as $G_x$-sets.
\end{proof}

Functoriality properties from Lemma~\ref{functlem0} extend to stacks straightforwardly.

\begin{lemma}\label{functlem}
Assume that $f\:Y\to X$ is a morphism of toroidal stacks, $y\to Y$ is a point, $x=f(y)$, and $\lambda_y\:G_y\to G_x$ is the induced homomorphism. Then,

(i) If $f$ is logarithmically flat at $y$ then $G_y^\tor\subseteq \lambda_y ^{-1}(G_x^\tor)$.

(ii) If $f$ is logarithmically flat and inert at $y$ and $G_y$ acts trivially on $\oM_y$ then $G_y^\tor=\lambda_y^{-1}(G_x^\tor)$.
\end{lemma}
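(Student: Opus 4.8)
The plan is to reduce the statement about stacks to the corresponding statement about a $G_x$-equivariant toroidal scheme, already established in Lemma~\ref{functlem0}, using the local presentations furnished by Theorem~\ref{coarseth}. First I would fix the geometric point $y\to Y$ with image $x=f(y)$ in $X$, and pass to representable \'etale neighborhoods of the form $Y'=[Y'_0/G_y]$ and $X'=[X'_0/G_x]$ as in \S\ref{toroidalinertia}, chosen compatibly so that $f$ is covered by a $\lambda_y$-equivariant morphism $f_0\:Y'_0\to X'_0$ of toroidal schemes. Concretely, one starts from $X'=X\times_{X_\cs}Z'$ with $Z'\to X_\cs$ an \'etale neighborhood of the image of $x$, pulls it back along $f_\cs\:Y_\cs\to X_\cs$ to get a neighborhood of the image of $y$ in $Y_\cs$, and then refines further (using Theorem~\ref{coarseth} applied to $Y\to Y_\cs$) so that $Y'$ also acquires a presentation $[Y'_0/G_y]$; the map $G_y\to G_x$ it induces is the $\lambda_y$ of the statement, and $\oM_{Y,y}=\oM_{Y'_0,y}$, $\oM_{X,x}=\oM_{X'_0,x}$ as equivariant monoids by Lemma~\ref{Gtorstacklem}.

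For part (i): since $f$ is logarithmically flat at $y$, the covering morphism $f_0$ is logarithmically flat at the chosen preimage of $y$ (logarithmic flatness is smooth-local), hence by Remark~\ref{functrem}(i) it respects fans and is injective on monoids there. Lemma~\ref{functlem0}(i) then gives $(G_y)_{y}^\tor\subseteq \lambda_y^{-1}((G_x)_x^\tor)$, which is exactly the asserted inclusion $G_y^\tor\subseteq\lambda_y^{-1}(G_x^\tor)$ once we identify the scheme-level toroidal stabilizers with the stack-level ones via the presentations.

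For part (ii): here $f$ is in addition inert at $y$, so $I_{Y/X}=Y$ near $y$ and the homomorphism $\lambda_y\:G_y\to G_x$ is an isomorphism; moreover $G_y$ acts trivially on $\oM_y$, i.e. the action on $Y'_0$ is simple at $y$. We need the reverse inclusion $\lambda_y^{-1}(G_x^\tor)\subseteq G_y^\tor$, which is Lemma~\ref{functlem0}(ii): we apply case (b), for which we need $f_0$ fix-point reflecting, the action of $G_y$ simple at $y$, and $f_0$ respecting fans. Simplicity is our hypothesis; fix-point reflecting follows because $f_0$ is inert (a point-reflecting strict-on-stabilizers condition — inertness forces $\lambda_y$ to be an isomorphism and the stabilizer to be reflected exactly); and respecting fans follows again from logarithmic flatness via Remark~\ref{functrem}(i). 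Combining with (i) gives the equality.

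The main obstacle I expect is bookkeeping at the level of presentations: one must choose the \'etale neighborhoods $Z',Z''$ on the coarse spaces of $X$ and $Y$ coherently enough that a single equivariant morphism $f_0\:Y'_0\to X'_0$ lifting $f$ exists, and then check that the scheme-level invariants ($G_{Y'_0,y}^\tor$, the $G_y$-action on $\oM_y$) computed there agree with the intrinsic stack-level toroidal stabilizer, independently of all choices. This is precisely the content of Lemma~\ref{Gtorstacklem} together with the inertness of refinement morphisms, so the work is to assemble these compatibilities rather than prove anything genuinely new; once the reduction to Lemma~\ref{functlem0} is in place, both inclusions are immediate.
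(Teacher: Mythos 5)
Your proposal is correct and follows essentially the same route as the paper: reduce to compatible quotient presentations $[Y'_0/G_y]\to[X'_0/G_x]$ by working \'etale-locally on the coarse moduli spaces (Theorem~\ref{coarseth}, Lemma~\ref{Gtorstacklem}), then invoke Lemma~\ref{functlem0}, using Remark~\ref{functrem}(i) to supply the fan/monoid hypotheses from logarithmic flatness. Your write-up is in fact more explicit than the paper's about which case of Lemma~\ref{functlem0}(ii) is used and how inertness yields the fix-point-reflecting hypothesis, but the underlying argument is identical.
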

\begin{proof}
If $Y=[Y_0/G_y]$ and $X=[X_0/G_x]$ are quotients of affine schemes then the toroidal stabilizers equal to the toroidal stabilizers of the actions of $G_y$ and $G_x$ on $Y_0$ and $X_0$, respectively. Hence the assertion follows from Lemma~\ref{functlem0}. The general case is reduced to this by local work on the coarse moduli spaces: first we base change both schemes with respect to an \'etale morphism $Z'\to X_\cs$ such that $X$ becomes as required. Then we replace $Y$ further by an appropriate \'etale neighborhood of $y$ induced from  an \'etale neighborhood of its image in $Y_\cs$.
\end{proof}

\subsubsection{Toroidal orbifolds}\label{orbsec}
In the sequel, by a {\em toroidal orbifold} we mean a toroidal DM stack $X$ with finite diagonalizable inertia. We say that $X$ is {\em simple} if for any point $y\to Y$ the group $G_y$ acts on $\oM_y$ trivially.

\subsubsection{The construction}
Now we can construct the total toroidal coarsening. 

\begin{theorem}\label{modulith}
Let $\cC$ be the 2-category of toroidal orbifolds and let $X$ be an object of $\cC$. Then,

(i) The total toroidal coarsening $X_\tcs$ exists.

(ii) For any geometric point $x\to X$, we have $(I_{X/X_\tcs})_x = G_x^\tor$, where $(I_{X/X_\tcs})_x$ is the relative stabilizer and $G_x^\tor$ the toroidal inertia group.

(iii) Any logarithmically flat morphism $h\:Y\to X$ in $\cC$ induces a morphism $h_\tcs\:Y_\tcs\to X_\tcs$ with a 2-commutative diagram
$$
\xymatrix{
Y\ar[d]_h\ar[r]^{\phi_Y} & Y_\tcs\ar[d]^{h_\tcs} \\
X\ar[r]_{\phi_X}\ar@{=>}[ur]^\alpha & X_\tcs
}
$$
and the pair $(h_\tcs,\alpha)$ is unique in the 2-categorical sense: if $(h'_\tcs,\alpha')$ is another such pair then there exists a unique 2-isomorphism $h'_\tcs=h_\tcs$ making the whole diagram 2-commutative.

(iv) Assume that $h$ is logarithmically flat and inert, and $Y$ is simple. Then the diagram in (iii) is 2-cartesian.
\end{theorem}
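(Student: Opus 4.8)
The plan is to build $\phi_X$ from explicit local models and glue them using the rigidity of coarsening morphisms, and then to deduce (iii) and (iv) formally from the universal property (Theorem~\ref{univth}) and the functoriality of toroidal stabilizers (Lemma~\ref{functlem}). For (i)--(ii), fix a geometric point $x\to X$. By Theorem~\ref{coarseth} there is an \'etale neighborhood $Z'\to X_\cs$ of the image of $x$ with $X':=X\times_{X_\cs}Z'=[U/G_x]$, where $U$ is affine, acted on by $G_x$, with stabilizer $G_x$ at $x$; pulling back $M_X$ makes $U$ a $G_x$-equivariant toroidal scheme. Put $H=G_x^\tor$, which is normal since $G_x$ is abelian. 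After shrinking $U$ (hence $Z'$) I may assume, by Lemma~\ref{Gtorlem}, that $H\cap G_{x'}=G_{x'}^\tor$ for all $x'\in U$; then, applying Lemma~\ref{quotlem} and \cite[Theorem~3.3.12]{AT1} at each point (near $x'$ the quotient $U/H$ agrees with $U/(H\cap G_{x'})$), the scheme $U/H$ is toroidal and $U\to U/H$ is Kummer logarithmically \'etale. Set $X'_\tcs:=[(U/H)/(G_x/H)]$. The map $X'\to X'_\tcs$ is then a Kummer logarithmically \'etale coarsening morphism with toroidal target and relative stabilizer $H\cap G_{x'}=G_{x'}^\tor$ at each $x'$. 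By Lemma~\ref{coarselem} the $2$-category of coarsening morphisms of a fixed stack is a poset, and by Corollary~\ref{coarsecor} such a morphism is determined by its geometric relative stabilizers; hence the local models are canonically isomorphic on overlaps, satisfy the cocycle condition, and descend to a toroidal coarsening morphism $\phi_X\:X\to X_\tcs$ with $(I_{X/X_\tcs})_x=G_x^\tor$, which is (ii). To see that $\phi_X$ is final --- hence is the total toroidal coarsening, giving (i) --- let $g\:X\to Y_1$ be any toroidal coarsening morphism; the local description of $g$ from Theorem~\ref{coarseth} exhibits the induced map of \'etale atlases as Kummer logarithmically \'etale with toroidal target, so Lemma~\ref{quotlem} forces $(I_{X/Y_1})_x\subseteq G_x^\tor$ for all $x$, whence $I_{X/Y_1}\subseteq I_{X/X_\tcs}$ by the kernel-subgroup lemma preceding Corollary~\ref{coarsecor} (both are kernel subgroups of $I_X$, whose geometric fibers are finite \'etale), and Theorem~\ref{univth} yields a $2$-unique factorization $\phi_X=u\circ g$.

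For (iii), I will apply Theorem~\ref{univth} to the coarsening morphism $\phi_Y\:Y\to Y_\tcs$ and the composite $\psi:=\phi_X\circ h\:Y\to X_\tcs$. At $y\to Y$ with $x=h(y)$ and induced homomorphism $\lambda_y\:G_y\to G_x$, the base change property of inertia (see \eqref{inertiaeq1}) gives $(I_{Y/\psi})_y=\lambda_y^{-1}\bigl((I_{X/X_\tcs})_x\bigr)=\lambda_y^{-1}(G_x^\tor)$, while $(I_{Y/Y_\tcs})_y=G_y^\tor$. Since $h$ is logarithmically flat, Lemma~\ref{functlem}(i) gives $G_y^\tor\subseteq\lambda_y^{-1}(G_x^\tor)$ for all $y$, and hence $I_{Y/Y_\tcs}\subseteq I_{Y/\psi}$ by the kernel-subgroup lemma. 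Therefore Theorem~\ref{univth}(i) produces a morphism $h_\tcs\:Y_\tcs\to X_\tcs$ together with a $2$-isomorphism $\alpha\:\psi\toisom h_\tcs\circ\phi_Y$, i.e. the required $2$-commutative square; the $2$-categorical uniqueness of $(h_\tcs,\alpha)$ is Theorem~\ref{univth}(ii), the assertion that the coarsening morphism $\phi_Y$ is a $2$-epimorphism.

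For (iv), since $h$ is inert and logarithmically flat it is the pullback of $h_\cs\:Y_\cs\to X_\cs$, so $Y=X\times_{X_\cs}Y_\cs$ with $Y_\cs\to X_\cs$ flat; moreover $(X_\tcs)_\cs=X_\cs$ and $(Y_\tcs)_\cs=Y_\cs$ by Lemma~\ref{coarselem}(iii). Set $W:=X_\tcs\times_{X_\cs}Y_\cs$; its projection to $X_\tcs$ is flat (a base change of $Y_\cs\to X_\cs$) and $X\times_{X_\tcs}W=X\times_{X_\cs}Y_\cs=Y$, so $Y\to W$ is a coarsening morphism, being the flat base change of $\phi_X$, and by \eqref{inertiaeq2} its relative inertia is the pullback of $I_{X/X_\tcs}$ along $Y\to X$, hence has fiber $G_{h(y)}^\tor$ at $y$. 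Because $h$ is inert and logarithmically flat and $Y$ is simple, Lemma~\ref{functlem}(ii) gives $G_y^\tor=\lambda_y^{-1}(G_{h(y)}^\tor)=G_{h(y)}^\tor$. Thus $Y\to W$ and $\phi_Y\:Y\to Y_\tcs$ are coarsening morphisms of $Y$ with the same geometric relative stabilizers, so by Corollary~\ref{coarsecor} there is a canonical isomorphism $W\toisom Y_\tcs$ over $Y$, which agrees with the comparison map coming from (iii) since $\phi_Y$ is a $2$-epimorphism; finally $X\times_{X_\tcs}Y_\tcs\cong X\times_{X_\tcs}W=X\times_{X_\cs}Y_\cs=Y$, the asserted $2$-cartesianness. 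The main obstacle is the local input used in (i)--(ii): one must check that $U/H$ is log regular and $U\to U/H$ Kummer logarithmically \'etale even when $H=G_x^\tor$ fails to act toroidally at every point of $U$, which forces the point-by-point argument via Lemma~\ref{Gtorlem} and \cite[Theorem~3.3.12]{AT1}, and then see that the local models glue, which rests on the rigidity of coarsening morphisms (Lemma~\ref{coarselem}, Corollary~\ref{coarsecor}); parts (iii) and (iv) are comparatively formal, the only care needed being the passage from geometric stabilizer fibers to relative-inertia group schemes via the \'etale kernel-subgroup lemma.
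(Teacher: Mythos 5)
Your proofs of (i)--(iii) follow the paper's own argument essentially step for step: the same local models $[(U/G_x^\tor)/(G_x/G_x^\tor)]$ over an \'etale cover of $X_\cs$ supplied by Theorem~\ref{coarseth} and Lemma~\ref{Gtorlem}, glued using the rigidity of coarsening morphisms (Lemma~\ref{coarselem}, Corollary~\ref{coarsecor}); finality via Lemma~\ref{quotlem}; and (iii) by checking $I_{Y/Y_\tcs}\subseteq I_{Y/X_\tcs}$ on geometric points with Lemma~\ref{functlem}(i) and invoking Theorem~\ref{univth}. These parts are fine.

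Part (iv), however, contains a genuine gap. You open with ``since $h$ is inert and logarithmically flat it is the pullback of $h_\cs\:Y_\cs\to X_\cs$, so $Y=X\times_{X_\cs}Y_\cs$ with $Y_\cs\to X_\cs$ flat.'' The fact you are invoking (recalled in \S2.2.1 from \cite[Lemma~2.2.2]{Abramovich-Vistoli}) requires $h$ to be flat in the ordinary sense; logarithmic flatness does not imply flatness, and the hypothesis of (iv) only provides the latter. Concretely, let $X=[\AA^2/\mu_2]$ with $\mu_2$ acting by $-1$ on both coordinates and with toroidal structure the coordinate cross, and let $h\:Y\to X$ be the (stack-theoretic) toroidal blowing up of the origin. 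Then $h$ is logarithmically \'etale, inert, and $Y$ is simple, so (iv) applies; but $h_\cs$ is a blowing up of the quadric cone, hence not flat, and $X\times_{X_\cs}Y_\cs$ acquires an embedded component along the exceptional fibre (on a chart its atlas is $\Spec k[x,u,v]/(xv,v^2)$), so it is not $Y$. Consequently $W=X_\tcs\times_{X_\cs}Y_\cs$ need not be flat over $X_\tcs$, the base change of $\phi_X$ along $W\to X_\tcs$ need not be a coarsening morphism of $Y$, and both the identification $W\cong Y_\tcs$ and your final chain of equalities collapse. (The same example shows that the cartesian square in (iv) can only be meant with the fibre product taken in the fs logarithmic category; any route passing through the ordinary fibre product $X\times_{X_\cs}Y_\cs$ is therefore bound to fail.) The paper's proof of (iv) avoids $X_\cs$ and $Y_\cs$ altogether: it first shows $h_\tcs$ is inert using Lemma~\ref{functlem}(ii) (this is where simplicity of $Y$ enters), then pulls the square back along an \'etale atlas $Z\to X_\tcs$, over which $\phi_X$ and $\phi_Y$ become total coarse moduli maps onto algebraic spaces, and concludes by the compatibility of coarse spaces of tame stacks with base change. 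Your argument is valid when $h$ is genuinely flat (e.g.\ strict \'etale), but not in the stated generality.
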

\begin{proof}
We will first construct a coarsening $X\to\tX$ that satisfies (ii); by Corollary \ref{coarsecor} this determines $\tX$ uniquely.

Assume first that $X$ satisfies the following two conditions: (a) $X=[W/G]$, where $W$ is a quasi-affine toroidal scheme and $G$ is an \'etale diagonalizable group, (b) $G$ contains a subgroup $G^\tor$ such that $G_w^\tor=G^\tor\cap G_w$ for any $w\in W$. Then $\tX=[(W/G^\tor)/(G/G^\tor)]$ is a coarsening of $X$ satisfying (ii). Let us deduce the general case.

Theorem~\ref{coarseth} implies that for any geometric point $x\to X$ we can find a neighborhood $Z_0\to Z:=X_\cs$ such that $X_0=X\times_ZZ_0$ satisfies (a) with $G=G_x$, say $X_0=[W_0/G_x]$. Since $|X_0|=|Z_0|$, by Lemma~\ref{Gtorlem} we can shrink $Z_0$ so that condition (b) is satisfied with $G^\tor=G_x^\tor$. In particular, we obtain a coarsening $X_0\to\tX_0$ satisfying (ii). Set $Z_1=Z_0\times_ZZ_0$ and $X_1=X\times_ZZ_1$. Then both $\tX_1:=\tX_0\times_{Z_0,p_1}Z_1$ and $\tX_0\times_{Z_0,p_2}Z_1$ are coarsenings of $X_1$ that satisfy (ii), hence they coincide and we obtain the following diagram
$$
\xymatrix{
X_1\ar@<0.5ex>[d]\ar@<-0.5ex>[d]\ar[r] & \tX_1\ar@<0.5ex>[d]\ar@<-0.5ex>[d]\ar[r] & Z_1\ar@<0.5ex>[d]\ar@<-0.5ex>[d]\\
X_0\ar[r]\ar[d] & \tX_0\ar[r] & Z_0\ar[d]\\
X\ar[rr] &  & Z
}
$$
Moreover, the same argument with $X_2,\tX_2$ and $Z_2$ provides $\tX_1\toto \tX_0$ with a structure of an inert \'etale groupoid. Therefore, the quotient $\tX=[\tX_0/\tX_1]$ exists by Lemma~\ref{inertquot} and the sequence $X_0\to \tX_0\to Z_0$ is the base change of a sequence $X\to \tX\to Z$. It follows that $X\to \tX$ is a coarsening morphism, and since $\tX_0\to \tX$ is inert, $\tX$ has stabilizers as required by (ii).

Now, let us check (i). It suffices to show that a coarsening $f\:X\to T$ is toroidal in an open neighborhood of a geometric point $x\to X$ if and only if the normal subgroup $H=(I_{X/T})_x$ of $G_x$ is contained in $G_x^\tor$. It suffices to check this claim \'etale locally, hence the same argument as earlier reduces the check to the case when $X$ satisfies (a) and (b), in particular, $X=[W/G_x]$ and $T=[(W/H)/(G/H)]$. \'Etale locally $f$ is of the form $W\to W/H$, hence the assertion follows from Lemma~\ref{quotlem}.

To prove (iii) we should prove that the morphism $Y\to X_\tcs$ factors through $Y_\tcs$ uniquely. So, by Theorem~\ref{univth} we should prove that $I_{Y/Y_\tcs}$ is mapped to zero in $I_{X_\tcs}$. We claim that, moreover, the map $I_Y\to I_X$ takes $I_{Y/Y_\tcs}$ to $I_{X/X_\tcs}$. It suffices to check this on the geometric points, since the inertia are \'etale for DM stacks. But the latter is precisely Lemma~\ref{functlem}(i).

Let us prove (iv). Let $Q$ denote the square diagram from (iii). Choose an \'etale covering of $f\:Z\to Y_\tcs$ with $Z$ a scheme. It suffices to show that the base change square $f^*(Q):=Q\times_{X_\tcs}Z$ is 2-cartesian. For any point $y\to Y$ with $x=h(y)$ we have that $G^\tor_y\toisom G_x^\tor$ by Lemma~\ref{functlem}(ii). Hence $I_{\phi_Y(y)}=I_{\phi_X(x)}$, and we obtain that the morphism $h_\tcs$ is inert. It follows that $Z\times_{X_\tcs}Y_\tcs$ is an algebraic space. Thus, the morphisms $f^*(\phi_X)$ and $f^*(\phi_Y)$ are coarsening morphisms whose targets are algebraic spaces, and hence both are usual coarse spaces. Since coarse spaces are compatible with arbitrary base changes in the case of DM stacks, the square $f^*(Q)$ is 2-cartesian.
\end{proof}

\subsubsection{Relative toroidal coarsening} The fact that the total toroidal coarsening behaves well under pullbacks allows us to define a relative version. Consider a toroidal morphism of algebraic stacks $X \to S$, where $X$ is a toroidal orbifold as before. The relative toroidal coarsening of $X$ over $S$, if exists, is the final object $X \to X_{\tcs /S} \to S$ among factorizations $X \to Z \to S$, where $X \to Z$ is a toroidal coarsening morphism relative to $S$.

\begin{corollary}\label{modulicor} 	The relative toroidal coarsening exists. For any geometric point $x\to X$, we have $(I_{X/X_{\tcs/S}})_x = G_x^{\tor/S}$, where $(I_{X/X_{\tcs/S}})_x$ is the relative stabilizer and $G_x^{\tor/S}$ the toroidal inertia group relative to $S$.
\end{corollary}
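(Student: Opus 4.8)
The plan is to reduce the relative statement to the absolute one proved in Theorem~\ref{modulith} by a local analysis on $S$. First I would recall that, since $X\to S$ is toroidal (hence logarithmically smooth) and $X$ is a toroidal orbifold, the composite $X\to S$ has finite diagonalizable relative inertia, so a relative coarse space $X_{\cs/S}$ exists by \cite{AOV2} and the relative analogue of the Keel--Mori theorem; by Remark~\ref{coarserem} the morphism $X\to X_{\cs/S}$ is the terminal coarsening morphism of $X$ relative to $S$. The notion $G_x^{\tor/S}$ is defined, as in \S\ref{torstabsec}, using the relative log structure $\oM_{X/S,x}$ and the relative fan $\Fan(X/S)$: it is the maximal subgroup of $G_x$ acting toroidally on the fibers of $X\to S$.

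The key step is to build a coarsening $X\to\tX$ relative to $S$ whose geometric relative stabilizers are exactly the $G_x^{\tor/S}$; by the relative form of Corollary~\ref{coarsecor} this pins $\tX$ down uniquely, and then the universal property (relative toroidality $\Leftrightarrow$ relative stabilizers lie in $G_x^{\tor/S}$) follows from the relative version of Lemma~\ref{quotlem}. To construct $\tX$ I would mimic the proof of Theorem~\ref{modulith}: working \'etale-locally on $S$ and on $Z:=X_{\cs/S}$, Theorem~\ref{coarseth} (applied to $X\to X_{\cs/S}$, which is a coarsening morphism) presents a neighborhood of any geometric point $x$ as $X_0=[W_0/G_x]$ with $W_0$ a quasi-affine toroidal scheme over a neighborhood of $S$; the relative version of Lemma~\ref{Gtorlem} lets us shrink so that $G_w^{\tor/S}=G_x^{\tor/S}\cap G_w$ for all $w\in W_0$, whence $\tX_0=[(W_0/G_x^{\tor/S})/(G_x/G_x^{\tor/S})]$ is the desired relative coarsening locally. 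Gluing via an inert \'etale groupoid $\tX_1\toto\tX_0$ over $Z_1\toto Z_0$, using Lemma~\ref{inertquot} exactly as before, produces the global $\tX=X_{\tcs/S}$ together with the factorization $X\to X_{\tcs/S}\to S$.

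The only genuinely new ingredients are the relative analogues of Lemmas~\ref{Gtorlem}, \ref{quotlem} and \ref{functlem0}, and here the point I expect to be the main obstacle is checking that the relative toroidal stabilizer is still controlled \'etale-locally on $S$: one must verify that for a toroidal morphism $W\to S$ the relative fan $\Fan(W/S)$ and the sheaves $\oM_{W/S}$ behave well under the base changes and localizations used in the construction, and that the cospecialization maps $\oM_{W/S,x}\to\oM_{W/S,x'}$ remain surjective along generizations within a single fiber. Granting this, the proofs of Lemmas~\ref{Gtorlem} and \ref{quotlem} transcribe verbatim with $\oM$ replaced by $\oM_{\bullet/S}$, $\eta$ by the relative contraction, and "Kummer log \'etale" by "Kummer log \'etale relative to $S$"; likewise Lemma~\ref{functlem0}(i) gives $G_y^{\tor/S}\subseteq\lambda_y^{-1}(G_x^{\tor/S})$ for logarithmically flat $S$-morphisms, which is what is needed for the compatibility statement. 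Since all constructions are \'etale-local on $S$ and all the cited results hold over an arbitrary base, no finiteness-over-a-field hypothesis intervenes, and the argument closes.
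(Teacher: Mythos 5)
Your proposal misses the intended (and much shorter) argument, and along the way adopts a reading of ``relative to $S$'' that I don't think is the one the paper uses. The paper's proof is pure descent: choose a presentation $S_1\rightrightarrows S_0$ of $S$, pull back to get an inert presentation $X_1\rightrightarrows X_0$ of $X$, apply the \emph{absolute} Theorem~\ref{modulith} to each $X_i$ (over an algebraic space the relative and absolute notions coincide), and use the functoriality statements (iii)--(iv) of that theorem --- this is exactly why the sentence preceding the corollary stresses that the total toroidal coarsening ``behaves well under pullbacks'' --- to organize the $\tX_i=(X_i)_\tcs$ into an inert groupoid over $S_1\rightrightarrows S_0$ whose quotient (Lemma~\ref{inertquot}) is $X_{\tcs/S}$. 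No relative log structures, relative fans, or relative versions of Lemmas~\ref{Gtorlem}, \ref{quotlem}, \ref{functlem0} are needed.

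Two concrete problems with your route. First, a ``toroidal coarsening morphism relative to $S$'' is still a toroidal coarsening morphism for the \emph{absolute} log structure of $X$ (the target must be a toroidal stack and $X\to Z$ Kummer logarithmically \'etale, as in \S\ref{torcoarsesec}); the ``relative to $S$'' only imposes the factorization $X\to Z\to S$, which by Theorem~\ref{univth} amounts to $I_{X/Z}\subseteq I_{X/S}$, so that $G_x^{\tor/S}$ is the maximal subgroup of the relative stabilizer $(I_{X/S})_x$ acting toroidally for $\oM_X$, i.e. $G_x^\tor\cap(I_{X/S})_x$. Defining $G_x^{\tor/S}$ via $\oM_{X/S}$ and a ``relative fan'' changes the stratification (the loci where $\rk\oM_{X/S,x}$ is constant are not the absolute logarithmic strata once $\rk\oM_{S}$ jumps) and can yield a strictly larger group, hence a coarsening that is not Kummer logarithmically \'etale for $\oM_X$ and so not a toroidal coarsening in the paper's sense at all. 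Second, even within your framework, the step you yourself flag as the main obstacle --- that relative monoids, relative fans and their cospecialization maps behave as in the absolute case --- is left unverified, and it is precisely the load-bearing part of your argument. Presenting $S$ and invoking Theorem~\ref{modulith}(iii)--(iv) makes all of this unnecessary.
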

\begin{proof}
Choose a presentation $S_1 \double S_0$ of $S$ and let $X_1 \double X_0$ be the pullback diagram, giving an inert presentation of $X$. Setting $\tilX_i=(X_i)_\tcs$ we obtain a diagram of inert groupoids
$$
\xymatrix{
X_1\ar@<0.5ex>[d]\ar@<-0.5ex>[d]\ar[r] & \tX_1\ar@<0.5ex>[d]\ar@<-0.5ex>[d]\ar[r] & S_1\ar@<0.5ex>[d]\ar@<-0.5ex>[d]\\
X_0\ar[r]\ar[d] & \tX_0\ar[r]\ar@{.>}[d] & S_0\ar[d]\\
X\ar@{.>}[r] & \tX\ar@{.>}[r]  & S,
}
$$
implying the existence of $X \to \tX \to S$ as before.
\end{proof}

\section{Destackification}\label{destacksec}

\subsection{The main result}

\subsubsection{Blowings up of toroidal stacks}\label{blowtorsec}
We say that a morphism $f\:(X',U')\to(X,U)$ of toroidal stacks is the {\em blowing up along} a closed substack $Z\into X$ if $f\:X'\to X$ is a blowing up along $Z$ and $U'=f^{-1}(U)\setminus f^{-1}(Z)$. For example, a blowing up of toroidal schemes is a blowing up of usual schemes $f\:X'\to X$ such that the toroidal divisor $X'\setminus U'$ of $(X',U')$ is the union of the preimage of the toroidal divisor of $(X,U)$ and the exceptional divisor of $f$. We use the same definition for normalized blowings up.

\subsubsection{Torification}
Our destackification results are based on and can be viewed as stack-theoretic enhancements of torification theorems of \cite{AT1}. In appendix \ref{torapp} we recall these results and slightly upgrade them according to the needs of this paper.

\subsubsection{Destackification theorem}
Let us first formulate our main results on destackification. Their proof will occupy the rest of Section~\ref{destacksec}. Using the torification functors $\cT$ and $\tilcT$ we will construct two destackification functors: $\cF$ and $\tilcF$. The former one has stronger functoriality properties, but only applies to toroidal stacks with inertia acting \emph{simply}.

\begin{theorem}\label{destackth}
Let $\tilcC$ be the category of toroidal orbifolds.

(i) For any object $X$ of $\tilcC$ there exists a sequence of blowings up of toroidal stacks $\tilcF_X\:X_n\dashrightarrow X$ such that $(X_n)_\tcs=(X_n)_\cs$.

(ii) In addition, one can choose $\tilcF$ compatible with surjective strict inert morphisms $f\:X'\to X$ from $\tilcC$ in the sense that for any such $f$ the sequence $\tilcF_{X'}$ is the pullback of $\tilcF_X$ with empty blowings up omitted.
\end{theorem}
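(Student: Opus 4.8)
The plan is to reduce the destackification problem, via the total toroidal coarsening constructed in Theorem~\ref{modulith}, to the torification results of Appendix~\ref{torapp}, and then lift a torifying sequence of blowings up through the coarsening morphism. Concretely, given a toroidal orbifold $X$, first form $\phi_X\:X\to X_\tcs$ from Theorem~\ref{modulith}(i). By Theorem~\ref{modulith}(ii) the relative stabilizer of $\phi_X$ at a geometric point $x$ is exactly $G_x^\tor$, the maximal subgroup of $G_x$ acting toroidally; hence on $X_\tcs$ the residual inertia acts \emph{non-toroidally} at every point where it is non-trivial, i.e.\ $X_\tcs$ is the toroidal stack all of whose stacky structure is ``essential'' and cannot be removed while preserving logarithmic smoothness. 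The coarse space is unchanged: $(X_\tcs)_\cs=X_\cs$ and $X\to X_\tcs\to X_\cs$. So it suffices to produce a sequence of toroidal blowings up $Y\dashrightarrow X_\tcs$ after which the total toroidal coarsening becomes an isomorphism onto the coarse space — equivalently, after which no non-trivial inertia acts toroidally at any point, which is precisely what the torific ideal of \cite{AT1} achieves.

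The key steps, in order: (1) Construct $\phi_X\:X\to X_\tcs$ and record via Theorem~\ref{modulith}(ii)--(iii) that it is compatible with strict inert morphisms and that $(X_\tcs)_\cs = X_\cs$. (2) Apply the (functorial) torification functor $\cT$ or $\tilcT$ recalled in Appendix~\ref{torapp} to $X_\tcs$: \'etale-locally $X_\tcs=[W/G]$ with $G$ diagonalizable, and the torific ideal on the quotient, when blown up, yields a toroidal stack on which the inertia action is everywhere ``free of toroidal part''. Since the torific ideal is defined \'etale-locally on the coarse space in a way compatible with the groupoid presentation, the blowings up glue to a sequence $X_\tcs' \dashrightarrow X_\tcs$ of blowings up of toroidal stacks. (3) Pull this sequence back along $\phi_X$: because the total toroidal coarsening is compatible with pullbacks (Theorem~\ref{modulith}(iii), and it is an inert-type situation locally), a blowing up of $X_\tcs$ along an ideal $I$ pulls back to the blowing up of $X$ along $I\cO_X$, and the centers remain toroidal; this produces $\tilcF_X\:X_n\dashrightarrow X$. (4) Verify that on $X_n$ one has $(X_n)_\tcs=(X_n)_\cs$: by construction the toroidal inertia $G_x^\tor$ is trivial at every geometric point of $X_n$ (that is exactly the defining property of having blown up the torific ideal), so by Theorem~\ref{modulith}(ii) the morphism $(X_n)\to (X_n)_\tcs$ is inert, hence an isomorphism, and $(X_n)_\tcs=(X_n)_\cs$. (5) For part (ii), trace functoriality: the torification functors of \cite{AT1} are compatible with strict inert (in fact with suitable surjective) morphisms, $X\mapsto X_\tcs$ is compatible with strict inert morphisms by Theorem~\ref{modulith}, and pulling back ideals along a strict morphism and then blowing up commutes with the given strict inert $f\:X'\to X$; empty blowings up are omitted because a strict inert morphism can only shrink, not enlarge, the locus where a center is non-trivial.

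The main obstacle I expect is Step~(3): making precise that the total toroidal coarsening interacts well enough with blowings up that a torifying sequence on $X_\tcs$ lifts to a \emph{sequence of blowings up of toroidal stacks} on $X$ with the claimed combinatorial/geometric properties — in particular that $\phi_X^{-1}$ of the torific center is again a permissible toroidal center on $X$, and that iterating is legitimate because each blowup of $X$ is again a toroidal orbifold whose total toroidal coarsening is the corresponding blowup of $X_\tcs$. This requires knowing that the torific ideal is itself functorial for the Kummer-log-\'etale coarsening $X\to X_\tcs$, which is where the results of \cite{AT1} recalled and upgraded in Appendix~\ref{torapp} must be invoked; the rest (gluing over a groupoid presentation, the inertia computation, the functoriality bookkeeping for (ii)) is routine given Theorems~\ref{univth}, \ref{coarseth} and \ref{modulith}.
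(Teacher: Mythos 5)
The central step of your argument is inverted. The destackification condition $(X_n)_\tcs=(X_n)_\cs$ does not mean that the toroidal inertia of $X_n$ is trivial; by Theorem~\ref{modulith}(ii) together with Corollary~\ref{coarsecor} it means that $G_x^\tor=G_x$ at \emph{every} geometric point, i.e.\ that the \emph{entire} inertia acts toroidally, so that the total toroidal coarsening already removes all of it and lands on the coarse space. Correspondingly, torification in \cite{AT1} is a $G$-equivariant sequence of blowings up after which the action \emph{is} toroidal everywhere --- it creates toroidality rather than destroying it. Your steps (2) and (4) assert the opposite on both counts (``inertia everywhere free of toroidal part'', and ``$G_x^\tor$ is trivial at every geometric point of $X_n$ \dots hence $(X_n)_\tcs=(X_n)_\cs$''). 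If $G_x^\tor$ were trivial everywhere then $(X_n)_\tcs=X_n$, which equals $(X_n)_\cs$ only when $X_n$ is already an algebraic space; and your description of $X_\tcs$ as the stack on which no nontrivial inertia acts toroidally shows that the state you propose to reach by blowing up is one that $X_\tcs$ already has, so your sequence would accomplish nothing. As written the argument therefore proves the wrong statement.

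The detour through $X_\tcs$ is also not how the proof goes, and it is the source of the difficulty you yourself flag in Step~(3). The intended argument applies the equivariant torification $\tilcT_{W,G}$ directly to local quotient presentations $X=[W/G]$ furnished by Theorem~\ref{coarseth} over an \'etale cover of $X_\cs$: the $G$-equivariant centers descend to blowings up of the stack $X$ itself, independence of the presentation follows from the $\lambda$-equivariance statement (Theorem~\ref{lambdath}), and the local constructions glue by inert \'etale descent. On the result the action of $G$ on the torified chart is toroidal, so $G_x=G_x^\tor$ everywhere and Theorem~\ref{modulith} gives $(X_n)_\tcs=(X_n)_\cs$; functoriality in (ii) comes from the strong $\lambda$-equivariance hypotheses of Theorem~\ref{lambdath} applied to lifts of strict inert morphisms. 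Blowing up $X_\tcs$ and pulling back along the non-flat, non-representable coarsening $\phi_X$ would additionally require showing that such a pullback is again a blowing up with toroidal center and that toroidality of the residual action downstairs forces toroidality of the full action upstairs --- neither of which you establish, and neither of which is needed once one torifies $X$ directly.
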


\begin{theorem}\label{destacksimpleth}
Let $\cC$ be the category of simple toroidal orbifolds. Then to any object $X$ in $\cC$ one can associate a blowing up of toroidal stacks $\cF_X\:X_1\to X$ along an ideal $I_X$ and a blowing up $\cF^0_X\:X_0\to X_\cs$ along an ideal $J_X$ so that

(i) $(X_1)_\tcs=(X_1)_\cs=X_0$.

(ii) If $f\:X'\to X$ is a surjective inert morphism in $\cC$, which is either strict or logarithmically smooth, then $\cF_{X'}$ and $\cF^0_{X'}$ are the pullbacks of $\cF_X$ and $\cF^0_{X}$, respectively.
\end{theorem}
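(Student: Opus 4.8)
The strategy is to reduce Theorem~\ref{destacksimpleth} to the torification functor $\cT$ from the appendix together with the total toroidal coarsening of Theorem~\ref{modulith}. For a simple toroidal orbifold $X$, the torification functor produces a canonical ideal on $X$ whose blowing up $\cF_X\:X_1\to X$ makes the inertia of $X_1$ act \emph{toroidally} at every geometric point; concretely, after blowing up the torific ideal one arranges that $G_x^\tor = G_x$ for all $x\to X_1$. I would first record this as the content we extract from the torification theorems (Appendix~\ref{torapp}), phrased so that the torific ideal $I_X$ is functorial with respect to the surjective inert morphisms allowed in the statement—both strict and logarithmically smooth ones. The point here is that the torific ideal is built from the logarithmic structure and the $G$-action on it, both of which pull back correctly under such morphisms by the functoriality established in Lemma~\ref{functlem} and Remark~\ref{functrem}; this is where one must be careful, since arbitrary morphisms do \emph{not} preserve the toroidal locus (Example~\ref{nonexam}).

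\textbf{Key steps.} Once $\cF_X\:X_1\to X$ is defined via the torific ideal, part (i) follows from Theorem~\ref{modulith}: since every $G_x$ now acts toroidally on $X_1$, we have $(I_{X_1/(X_1)_\tcs})_x = G_x^\tor = G_x$ for all $x$, so the relative inertia of $X_1\to (X_1)_\tcs$ is all of the inertia, forcing $(X_1)_\tcs = (X_1)_\cs$. The fact that $(X_1)_\tcs$ is an algebraic space and that $X_1\to(X_1)_\tcs$ is logarithmically smooth (indeed Kummer log étale on the relevant locus) is exactly the output of the toroidal coarsening construction. For the coarse ideal $J_X$ and the blowing up $\cF^0_X\:X_0\to X_\cs$: I would take $X_0 := (X_1)_\cs$, and use that blowing up commutes with passing to coarse spaces for DM stacks with the torific center chosen to be a pullback from $X_\cs$—more precisely, the torific ideal $I_X$, being canonical and inert-functorial, descends along $X\to X_\cs$ to an ideal $J_X$ on $X_\cs$ whose blowing up is $X_0$. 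One then checks $X_0 = (X_1)_\cs$ by comparing the two via the universal property of blowing up together with Lemma~\ref{coarselem}(iii) ($Y_\cs = X_\cs$ for coarsening morphisms), noting the coarse-space functor is compatible with arbitrary base change in the DM case.

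\textbf{Functoriality (part (ii)).} Given a surjective inert morphism $f\:X'\to X$ in $\cC$ that is strict or logarithmically smooth, I would argue $I_X\cO_{X'} = I_{X'}$ directly from the functoriality of the torific ideal in Appendix~\ref{torapp}: the torific ideal is defined étale-locally on the coarse space from the combinatorial data $(\oM_x, G_x\text{-action})$, which $f$ preserves (strictness gives $f^*\cM_X \toisom \cM_{X'}$; log smoothness gives the fan- and monoid-injectivity of Remark~\ref{functrem}(i), and inertness gives $G_{x'} \toisom G_{f(x')}$). Hence $X_1' = X_1\times_X X'$ by the base-change compatibility of blowing up. For the coarse statement $J_X\cO_{X'_\cs} = J_{X'}$ and $X'_0 = X_0\times_{X_\cs}X'_\cs$, I would apply Theorem~\ref{modulith}(iv): since $f$ is inert and $X'$ is simple, the square relating $X', X, X'_\tcs, X_\tcs$ is 2-cartesian, so passing to coarse spaces and combining with the previous paragraph gives the claimed base-change squares.

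\textbf{Main obstacle.} The delicate point is the functoriality of the torific ideal under \emph{logarithmically smooth} (not merely strict) inert morphisms, since log smooth morphisms genuinely change the monoid—only the injectivity $\oM_{f(x')}\into \oM_{x'}$ of Remark~\ref{functrem}(i) is available, not an isomorphism. One must verify that the torific ideal, as constructed in \cite{AT1} and upgraded in Appendix~\ref{torapp}, is insensitive to this kind of monoid enlargement, i.e.\ that it is computed from data that pulls back correctly; this is precisely why the appendix ``slightly upgrades'' the torification theorems, and it is the technical heart of the argument. A secondary subtlety is ensuring that the coarse ideal $J_X$ is well-defined on the algebraic space $X_\cs$—that the torific ideal on $X$ really is a pullback from $X_\cs$ rather than merely inert-equivariant—which again uses that it is canonically determined by the étale-local combinatorics of $(X,\cM_X)$ near each point of $X_\cs$.
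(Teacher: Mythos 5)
Your proposal follows essentially the same route as the paper: descend the torific blowing up of \cite{AT1} (as upgraded in Appendix~\ref{torapp}) to the stack — first for global quotients $[W/G]$, then by inert étale descent over a covering of $X_\cs$ — observe that torification makes every stabilizer act toroidally so that Theorem~\ref{modulith} forces $(X_1)_\tcs=(X_1)_\cs$, and obtain part (ii) from Theorem~\ref{nostrictth} and Corollary~\ref{nostrictcor}. The one imprecision is your description of $J_X$ as a ``descent'' of $I_X$ along $X\to X_\cs$ (ideals do not descend along coarse moduli maps, and $I_X$ is not a pullback from $X_\cs$): the paper instead takes $\cF^0_X=\cT^0_{W,G}$ directly from the torification theorem, which already asserts that the quotient morphism of the torified scheme carries a natural blowing-up structure.
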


\subsection{The proof}
We will work with Theorem~\ref{destacksimpleth} for concreteness. The proof of Theorem~\ref{destackth} is similar and involves less details; the main difference is that  one should use Theorem~\ref{lambdath} as the torification input instead of Theorem~\ref{nostrictth} and Corollary~\ref{nostrictcor}.

We will construct the functor $\cF$ by showing that the torification functor $\cT$ descends to stacks. This will be done in two stages: first we will show its descent to global quotients $[W/G]$ and then will use \'etale descent with respect to inert morphisms.

\subsubsection{Step 1: the global quotient case}\label{globalquotsec}
We will first prove the theorem for the subcategory $\cC'$ of $\cC$ whose objects $X$ are of the form $[W/G]$, where $G$ is an \'etale diagonalizable group acting on a toroidal quasi-affine scheme $W$. Since the blowing up and the center of $\cF_{W,G}$ are $G$-equivariant, they descend to $X$. Namely, there exists a unique blowing up of toroidal stacks $\cT_{X,W}\:X_1\to X$ whose pullback to $W$ is $\cF_{W,G}\:W_1\to W$. Since $[W/G]_\cs=W/G$, we simply set $\cF^0_{X,W}=\cT^0_{W,G}$. We claim that these $\cF_{W,G}$ and $\cF^0_{W,G}$ are independent of the choice of the covering $W$.

Suppose that $X=[W'/G']$ is another such representation. Note that $X=[W\times_XW'/G\times G']$ and hence it suffices to compare the blowings up induced from $W$ and $W\times_XW'$. In other words, we can assume that $G=G'/H$ and $X=X'/H$, where $H$ acts freely on $X'$. In this case the projection $X'\to X$ is inert and $\lambda$-equivariant for the projection $\lambda\:G'\onto G$. Thus, $\cF_{X',G'}$ and $\cF^0_{X',G'}$ are the pullbacks of $\cF_{X,G}$ and $\cF^0_{X,G}$ by Theorem~\ref{lambdath}. In particular, $\cF^0_{X',G'}=\cF^0_{X,G}$. This proves that $\cF_{W,G}=\cF_{W',G'}$ and $\cF^0_{W,G}=\cF^0_{W',G'}$, and in the sequel we can safely write $\cF_W$ and $\cF^0_W$.

The properties of $\cF$ and $\cF^0$ are checked similarly, so we will only discuss $\cF$. The action of $G$ on $W_1$ is toroidal, hence $G_w=G_w^\tor$ for any $w\in W_1$. Since $X_1=[W_1/G]$, the definition of toroidal stabilizers in \S\ref{toroidalinertia} implies that $G_x=G_x^\tor$ for any geometric point $x\to X$. Therefore, $(X_1)_\tcs=(X_1)_\cs$ by Theorem~\ref{modulith}. Assume that $f\:X'\to X$ is a strict inert morphism in $\cC'$. Choose presentations $X=[W/G]$ and $X'=[W'/G']$. Replacing the latter presentation by $[W'\times_XW/G\times G']$, we can assume that there is a homomorphism $\lambda\:G'\to G$ such that $f$ lifts to a $\lambda$-equivariant morphism $h\:W'\to W$. Since $f$ is inert, the same is true for $h$, and hence $\cF_W$ and $\cF_{W'}$ are compatible. By the definition of $\cF$ on $\cC'$, we obtain that $\cF_X$ and $\cF_{X'}$ are compatible too.

\subsubsection{Step 2: inert \'etale descent}
Assume now that $X$ is an arbitrary toroidal orbifold. By Theorem~\ref{coarseth} the coarse moduli space $Z=X_\cs$ possesses an \'etale covering $Z'=\coprod_{i=1}^lZ_i\to Z$ such that each $Z_i$ is affine and each $X_i=X\times_ZZ_i$ lies in $\cC'$, say $X_i=[W_i/G_i]$. Note that $X'=\coprod_{i=1}^lX_i$ is also in $\cC'$, for example, $X'=W'/G'$ for $W'=\coprod_i(X_i\times\prod_{j\neq i}G_j)$ and $G'=\prod_jG_j$. Furthermore, $X''=X'\times_XX'$ is also in $\cC'$ since $X''=[W''/G'']$ for $W''=W'\times_XW'$ and $G''=G'\times G'$. (Although $I_X\to X$ is finite, $X$ does not have to be separated, so $W''$ can be quasi-affine even though we started with an affine $W'$.)

By \S\ref{globalquotsec} $\cF$ was defined for $X'$ and $X''$ and $\cF_{X''}$ is the pullback of $\cF_{X'}$ with respect to either of the projections $X''\toto X'$. By \'etale descent, $\cF_{X'}$ is the pullback of a blowing up $\cF_{X,X'}\:X_1\to X$ of the toroidal stack $X$. In the same fashion, the blowings up $\cF^0_{X'}$ and $\cF^0_{X''}$ of $Z'$ and $Z''=Z'\times_ZZ'$ descend to a blowing up $\cF^0_{X,X'}\:Z_1\to Z$, and by descent $(X_1)_\cs=Z_1$. Independence of the covering $X'\to X$ is proved as usually: given another such covering one passes to a their fiber product, which is also a global quotient of a quasi-affine scheme, and then uses that $\cF$ is compatible with inert morphisms.

We have now constructed $\cF_X$ and $\cF^0_X$ for an arbitrary object of $\cC$. Their properties are established by \'etale descent via a covering $f\:X'\to X$ as above. For example, for any geometric point $x\to X_1$ choose a lifting $x'\to X'_1$. Then $G_x=G_{x'}$ because $f$ is inert, and hence $f_1\:X'_1\to X_1$ is inert too. In addition, $G_x^\tor=G_{x'}^\tor$ by Lemma~\ref{functlem}, and $G_{x'}=G_{x'}^\tor$ by Step 1. Thus, $G_x=G_x^\tor$, and hence $(X_1)_\tcs=(X_1)_\cs$.


\section{Kummer blowings up}

\subsection{Permissible centers}

\subsubsection{Toroidal subschemes}
Let $X$ be a toroidal scheme. We say that a closed subscheme $Y$ of $X$ is {\em toroidal} if $(Y,\cM_X|_Y)$ is toroidal. Thus toroidal closed subschemes correspond to strict closed immersions of toroidal schemes.

\begin{lemma}\label{chartlem}
Let $X$ be a toroidal scheme and $Y$ a closed subscheme of $X$. Then $Y$ underlies a toroidal subscheme if and only if locally at any point $y\in Y$ there exist regular coordinates $t_1\.t_n\in\cO_{X,y}$ and $m\le n$ such that $Y=V(t_1\.t_m)$ locally at $y$.
\end{lemma}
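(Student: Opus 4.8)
The statement is local at $y$, and since logarithmic regularity of $(Y,\cM_X|_Y)$ passes to completions while the coordinates to be produced can be exhibited directly in $\cO_{X,y}$, the plan is to argue using the local structure theory of logarithmically regular rings (see \cite{AT1}, \cite{Kato-toric}, \cite{Niziol}). I would set up notation first: write $I_{X,y}\subseteq\cO_{X,y}$ for the ideal generated by the nonunits of $M_{X,y}$, so that $\cO_{X,y}/I_{X,y}$ is the (regular) local ring of the logarithmic stratum through $y$, of dimension $d:=\dim\cO_{X,y}-r$ with $r=\operatorname{rank}\oM_{X,y}$. Since $\cM_X|_Y$ is a pullback logarithmic structure, $\oM_{Y,y}=\oM_{X,y}$ with the same rank $r$, the logarithmic stratum of $Y$ through $y$ is $Y\cap X(r)$, and $\cO_{Y,y}/I_{Y,y}=\cO_{X,y}/(\cI_Y+I_{X,y})$ where $\cI_Y$ is the ideal of $Y$. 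I will also use that $\widehat{\cO_{X,y}}$ is a completed tensor product of the completion of the regular local ring $\cO_{X,y}/I_{X,y}$ with a completed monoid algebra on $\oM_{X,y}$, and that, in the convention of \cite{AT1}, a system of regular coordinates at $y$ consists of a lift $t_1,\dots,t_d\in\cO_{X,y}$ of a regular system of parameters of $\cO_{X,y}/I_{X,y}$ (the transverse coordinates, listed first, so $m\le d$ in the statement) together with a chart for $\oM_{X,y}$; crucially, \emph{any} lift of a regular system of parameters of the stratum may serve as the transverse part. Finally, logarithmically regular local rings are Cohen--Macaulay normal domains.

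For the ``if'' direction I would take $Y=V(t_1,\dots,t_m)$ with $t_1,\dots,t_m$ transverse coordinates in some system of regular coordinates at $y$, extend to a full transverse system $t_1,\dots,t_d$, and observe from the local normal form that $\widehat{\cO_{Y,y}}=\widehat{\cO_{X,y}}/(t_1,\dots,t_m)$ is again a completed tensor product of a complete regular local ring of dimension $d-m$ with the monoid algebra on $\oM_{X,y}$; hence $(Y,\cM_X|_Y)$ is logarithmically regular at $y$, i.e.\ toroidal. (This also explains why only transverse coordinates occur: cutting by a monomial coordinate would leave $\operatorname{rank}\oM=r$ while enlarging $I_Y$, violating the dimension equality in logarithmic regularity.)

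For the ``only if'' direction — the substantive case — assume $(Y,\cM_X|_Y)$ is toroidal at $y$. Then the stratum $Y\cap X(r)$ is regular, and subtracting the two dimension formulas $\dim\cO_{\bullet,y}=\dim(\cO_{\bullet,y}/I_{\bullet,y})+r$ for $X$ and for $Y$ gives $c:=\codim(Y\cap X(r),\,X(r))=\codim(Y,X)$. As $Y\cap X(r)$ is regular of codimension $c$ in the regular scheme $X(r)$, the ideal of $Y\cap X(r)$ in $\cO_{X,y}/I_{X,y}$ is generated by $c$ elements forming part of a regular system of parameters; since that ideal is exactly the image of $\cI_Y$, I may lift these generators to elements $t_1,\dots,t_c\in\cI_Y$. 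Completing $\bar t_1,\dots,\bar t_c$ to a regular system of parameters of the stratum and lifting, the $t_i$ become the transverse part of a system of regular coordinates of $X$ at $y$. Now set $Y'=V(t_1,\dots,t_c)$, so that $Y\subseteq Y'$. By the ``if'' direction $Y'$ is toroidal, hence $\cO_{Y',y}$ is a Cohen--Macaulay normal domain, and the normal form gives $\dim\cO_{Y',y}=\dim\cO_{X,y}-c=\dim\cO_{Y,y}$. A surjection $\cO_{Y',y}\onto\cO_{Y,y}$ of such rings with equal dimension must be an isomorphism (its kernel is a prime of height zero in a domain), so $\cI_Y=(t_1,\dots,t_c)$ and $Y=V(t_1,\dots,t_c)$ locally at $y$, which is the asserted form.

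The step I expect to be the main obstacle is the last one: knowing that $Y\cap X(r)$ is a clean complete intersection inside the stratum does not by itself force $Y$ to be cut out by the lifted equations, because a priori $Y$ could acquire extra components or embedded primes supported on the toroidal divisor. It is precisely the normality of logarithmically regular schemes (Kato) that excludes this, via the comparison of $\cO_{Y,y}$ with the transverse complete intersection $\cO_{Y',y}$. Everything else is bookkeeping with the local structure theorem of \cite{AT1} and standard facts about regular sequences in the Cohen--Macaulay ring $\cO_{X,y}$; I would also be careful to cite \cite{AT1} for the flexibility that an arbitrary lift of a regular system of parameters of the stratum can serve as the transverse part of a system of regular coordinates, since that is exactly what allows the $t_i$ to be chosen inside $\cI_Y$.
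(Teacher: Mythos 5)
Your proposal is correct and follows essentially the same route as the paper: the ``if'' direction via the formal-local normal form of logarithmically regular rings, and the ``only if'' direction by lifting a regular system of parameters of the (regular) logarithmic stratum that cuts out $Y$'s stratum, choosing the lifts inside the ideal of $Y$, and then concluding that the inclusion $Y\into V(t_1\.t_m)$ is an equality because the latter is integral of the same dimension. Your extra care about the kernel of $\cO_{Y',y}\onto\cO_{Y,y}$ being a height-zero prime in a domain is exactly the dimension-plus-integrality argument the paper uses in its last sentence.
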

\begin{proof}
The inverse implication follows from the formal-local description of toroidal schemes. Assume that $Y$ is toroidal and let us construct required coordinates at $y$. We can assume that $X$ and $Y$ are local with closed point $y$. Let $d$ be the rank of $\oM_{X,y}=\oM_{Y,y}$, and let $n$ and $n-m$ be the dimensions of the closed logarithmic strata $X(d)$ and $Y(d)$. Since $X(d)$ and $Y(d)$ are regular, $\cO_{X(d),y}$ possesses a regular family of parameters $t'_1\.t'_n$ such that $V(t'_1\.t'_m)=Y(d)$. Lift them to coordinates $t_1\.t_n\in\cO_{X,y}$. Since $Y(d)=X(d)\times_X Y$, we can also achieve that $t_1\.t_m$ vanish on $Y$. The scheme $V(t_1\.t_m)$ is integral (even toroidal) by the inverse implication, and $\dim(X)=d+n$ and $\dim(Y)=d+n-m$, hence the closed immersion $Y\into V(t_1\.t_m)$ is an isomorphism.
\end{proof}

\subsubsection{Permissible centers}\label{centersec}
Let $X$ be a toroidal scheme. A closed subscheme $Z=\Spec_X(\cO_X/I)$ is called a {\em permissible center} if locally at any point $z\in Z$ it is the intersection of a toroidal subscheme and a monomial subscheme, that is, there exists a regular family of parameters $t_1\.t_n$ and a monomial ideal $J$ such that $I=(t_1\.t_l,J)$ for $l\le n$.

\subsubsection{Playing with the toroidal structure}
A standard method used in toroidal geometry is to enlarge/decrease the toroidal structure by adding/removing components to/from $X\setminus U$. For example, see \cite[\S\S3.4,3.5]{AT1}. We will use this method, and here is a first step.

\begin{lemma}\label{increaselem}
Assume that $(X,U)$ is a local toroidal scheme, $C$ is the closed logarithmic stratum and $t_1\.t_n$ a regular family of parameters of $\cO_{C,x}$. Let $W$ be obtained from $U$ by removing the divisors $V(t_1)\.V(t_l)$, where $0\le l\le n$. Then $(X,W)$ is toroidal and $\oM_{(X,U'),x}=\oM_{(X,U),x}\oplus\NN^l$.
\end{lemma}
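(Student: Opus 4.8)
The plan is to prove both assertions at once by reducing to the completion $\widehat{\cO}_{X,x}$ (equivalently, the strict henselization), where toroidality is detected and $\oM_x$ is literally computed, and then to recognize the new log structure as the divisorial log structure carried by a chart by the monoid $Q:=\oM_{(X,U),x}\oplus\NN^l$, the extra $\NN^l$ being generated by the images of $t_1\.t_l$.

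First I would set $P=\oM_{(X,U),x}$, $d=\rk P=\codim_X(C)$, so that $\dim\cO_{C,x}=n$ and $\dim\cO_{X,x}=n+d$, and lift $t_1\.t_l$ to $\cO_{X,x}$. By Kato's structure theorem for log regular local rings (\cite{Kato-toric}, \cite{Niziol}) there is an isomorphism $\widehat{\cO}_{X,x}\cong\widehat{\cO}_{C,x}[\![P]\!]$ in the equicharacteristic case (and the analogous presentation modulo one extra relation not involving the $\widehat{\cO}_{C,x}$-directions in mixed characteristic), under which $U$ becomes the complement of the height-one monomial primes $V(\mathfrak{p}_F)$, $F$ running over the facets of $P$. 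Since $t_1\.t_l$ is part of a regular system of parameters of the complete regular local ring $\widehat{\cO}_{C,x}$, Cohen's structure theorem gives $\widehat{\cO}_{C,x}\cong\widehat{\cO}_{C',x}[\![t_1\.t_l]\!]$, where $C'=C\cap V(t_1\.t_l)$ is regular of dimension $n-l$. Combining, $\widehat{\cO}_{X,x}\cong\widehat{\cO}_{C',x}[\![Q]\!]$ with $Q=P\oplus\NN^l$ and the $i$-th basis vector of $\NN^l$ corresponding to $t_i$.

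Then I would identify the new log structure. By construction $W=U\setminus(V(t_1)\cup\dots\cup V(t_l))$, so $M_{(X,W)}=\cO_{X\et}\cap j_*\cO^\times_{W\et}$ is the divisorial log structure of the reduced divisor $X\setminus W$, whose components near $x$ are the $V(\mathfrak{p}_F)$ together with the $V(t_i)$. Under the isomorphism above these are exactly the height-one monomial primes of the chart $Q$: the facets of $Q=P\oplus\NN^l$ are $F\oplus\NN^l$ for $F$ a facet of $P$, and $P\oplus H_i$ for $H_i$ the $i$-th coordinate hyperplane of $\NN^l$. Hence near $x$ the log scheme $(X,W)$ is $\widehat{\cO}_{C',x}[\![Q]\!]$ with its canonical monoid-algebra log structure, which is log regular with characteristic monoid $Q$; this yields both that $(X,W)$ is toroidal at $x$ and that $\oM_{(X,W),x}\cong Q=\oM_{(X,U),x}\oplus\NN^l$.

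The hard part will be the last identification — ensuring that $M_{(X,W)}=\cO_X\cap j_*\cO^\times_W$ is not strictly larger than the monoid-algebra log structure of $Q$, i.e. that every element of $\widehat{\cO}_{X,x}$ invertible off $\bigcup_F V(\mathfrak{p}_F)\cup\bigcup_i V(t_i)$ is a $Q$-monomial times a unit. I would deduce this from the normality of log regular local rings together with the saturation of $Q$ (and of $P$): such an element is determined up to a unit by its valuations along the codimension-one strata, and saturation forces the corresponding element of $Q^{\gp}$ to lie in $Q$. I would quote the relevant statement from Kato's theory of divisorial log structures (\cite{Kato-toric}, \cite{Niziol}) rather than reprove it, and note that this manipulation of the toroidal structure runs parallel to \cite[\S\S3.4, 3.5]{AT1}.
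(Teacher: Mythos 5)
Your argument is correct in substance but takes a noticeably heavier route than the paper. The paper's proof is a two-line application of Kato's rank criterion: the closed logarithmic stratum of $(X,W)$ is $C\cap V(t_1\.t_l)$, which is regular of codimension $l$ in $C$ because $t_1\.t_l$ is part of a regular system of parameters of $\cO_{C,x}$; since the characteristic monoid grows by exactly $\NN^l$, the identity $\dim\cO_{X,x}=\dim\cO_{C\cap V(t_1\.t_l),x}+\rk\,\oM_{(X,W),x}$ holds, giving logarithmic regularity at $x$, and hence everywhere on the local scheme $X$ by generization. You instead pass to $\widehat\cO_{X,x}$, invoke Kato's structure theorem to write it as $\widehat\cO_{C',x}[\![P\oplus\NN^l]\!]$, analyze the facets of $P\oplus\NN^l$, and then identify the divisorial log structure of $X\setminus W$ with the chart via normality and saturation. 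What your route buys is an explicit verification that $\cO_X\cap j_*\cO_W^\times$ really is the chart log structure (a point both proofs ultimately rely on, via Kato's theorem that a log regular log structure is divisorial, but which the paper leaves implicit in ``the equality of the monoids is clear''). What it costs is the mixed-characteristic step: Kato's presentation is $C(k)[\![P\oplus\NN^n]\!]/(\theta)$ with $\theta$ a priori involving the $t_i$-directions, so your claim that the relation avoids the $\widehat\cO_{C,x}$-coordinates needs justification before the two completions can be spliced as you do. That delicacy evaporates if one argues as the paper does, computing $\cO_{X,x}/(I_P+(t_1\.t_l))=\cO_{C,x}/(t_1\.t_l)$ directly without completing. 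I would also make explicit, as the paper does, that toroidality at the closed point $x$ propagates to all of the local scheme $X$.
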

\begin{proof}
The equality of the monoids is clear. Since the intersection of $C$ with $V(t_1\.t_l)$ is regular of codimension $l$ we obtain that $(X.W)$ is toroidal at $x$ and hence toroidal.
\end{proof}

\begin{corollary}\label{increasecor}
Assume that $(X,U)$ is a toroidal scheme and $Z\into X$ is a permissible center. Then locally on $X$ one can enlarge the toroidal structure of $X$ so that $Z$ is a monomial subscheme of the new toroidal scheme $(X,W)$.
\end{corollary}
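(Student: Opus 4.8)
The plan is to reduce to a single local statement and then invoke Lemma~\ref{increaselem} directly. The assertion is Zariski-local on $X$, so I would fix a point $z\in Z$ and, using the definition of a permissible center (\S\ref{centersec}), pass to an open neighborhood $V$ of $z$ on which $I$ has the form $I=(t_1\.t_l,J)$, where $J$ is a monomial ideal of $(V,U|_V)$ and $t_1\.t_l$ are the first $l$ members of a regular family of parameters $t_1\.t_n$ of the closed logarithmic stratum $C$ through $z$; that the toroidal part of $Z$ near $z$ is $V(t_1\.t_l)$ for such $t_i$'s is exactly Lemma~\ref{chartlem}. The candidate enlargement is then $W:=U|_V\setminus\bigl(V(t_1)\cup\dots\cup V(t_l)\bigr)$.

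Next I would check that $(V,W)$ is a toroidal scheme in which $t_1\.t_l$ are monomials. Applying Lemma~\ref{increaselem} to the local scheme $\Spec\cO_{X,z}$ gives that $(X,W)$ is toroidal at $z$ and that $\oM_{(X,W),z}=\oM_{(X,U),z}\oplus\NN^l$, the new free summand being generated by the classes of $t_1\.t_l$. Since logarithmic regularity is an open condition, after shrinking $V$ I may assume $(V,W)$ is toroidal on all of $V$. Each $t_i$ with $i\le l$ is a unit on $W$, since its zero locus was removed, hence $t_i$ is a section of $M_W$; so $t_1\.t_l$ are monomials of $(V,W)$.

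It then remains only to observe that $J$ stays a monomial ideal: as $W\subseteq U|_V$, the logarithmic structure of $(V,W)$ contains that of $(V,U|_V)$, so generators of $J$ coming from $M_{U|_V}$ are also sections of $M_W$. Therefore $I=(t_1\.t_l,J)$ is generated by monomials of $(V,W)$, i.e.\ $Z\cap V$ is a monomial subscheme of $(V,W)$. There is no real obstacle here; the two points deserving a word of care --- neither being hard --- are the passage from the local ring $\cO_{X,z}$ to an actual neighborhood $V$ (which uses only openness of the logarithmically regular locus) and the elementary verification that the $t_i$ appearing in the definition of a permissible center form a partial regular family of parameters of the closed stratum, so that the hypotheses of Lemma~\ref{increaselem} are met.
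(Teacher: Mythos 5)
Your proof is correct and follows the same route as the paper: near a point of $Z$ write $I=(t_1\.t_l,J)$ with $J$ monomial and the $t_i$ part of a regular family of parameters of the closed logarithmic stratum, set $W=U\setminus\cup_{i=1}^l V(t_i)$, and invoke Lemma~\ref{increaselem}. The paper's proof is just a terser version of yours; the extra care you take (openness of logarithmic regularity, the $t_i$ becoming monomials, $J$ remaining monomial) is fine but not a different argument.
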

\begin{proof}
Locally at $x\in X$ the center is given by $(t_1\.t_l,J)$, where $J$ is toroidal and of the logarithmic stratum $C$ through $x$. Set $W=U\setminus \cup_{i=1}^l V(t_i)$
and use Lemma~\ref{increaselem}.
\end{proof}

\subsubsection{Functoriality}
Permissible centers are respected by logarithmically smooth morphisms.

\begin{lemma}\label{centersfunct}
Assume that $f\:Y\to X$ is a logarithmically smooth morphism of toroidal schemes and $Z\into X$ is a permissible center (resp. a toroidal subscheme). Then $Z\times_XY$ is a permissible center (resp. a toroidal subscheme) in $Y$.
\end{lemma}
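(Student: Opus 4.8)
The plan is to work locally at a point $y \in Y$ with $x = f(y)$ and reduce both statements to the two structural descriptions we already have: Lemma~\ref{chartlem} for toroidal subschemes and the definition of permissible centers in \S\ref{centersec}, together with the key fact that logarithmically smooth morphisms are, \'etale-locally on source and target, of a very explicit monomial form. Since being a toroidal subscheme or a permissible center is a condition that is checked \'etale-locally, I may freely pass to \'etale neighborhoods and to strictly henselian local rings.

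First I would treat the \textbf{toroidal subscheme} case. Let $Z \into X$ be a strict closed immersion of toroidal schemes; by Lemma~\ref{chartlem}, locally at $x$ there are regular coordinates $t_1, \dots, t_n \in \cO_{X,x}$ and some $m \le n$ with $Z = V(t_1, \dots, t_m)$. The pullbacks $f^*t_1, \dots, f^*t_m \in \cO_{Y,y}$ cut out $Z \times_X Y$. A logarithmically smooth morphism is, \'etale-locally, the composition of a strict smooth morphism with a morphism induced by an injective map of monoids with torsion-free cokernel; in either case one checks that $f^*t_1, \dots, f^*t_m$ extend to a regular family of parameters of $\cO_{Y,y}$ — the $t_i$ lie in $\oM_{X,x}$ or not, and in each case logarithmic smoothness guarantees that their images remain part of a regular system of parameters. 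Applying the inverse implication of Lemma~\ref{chartlem} to this family shows $Z \times_X Y$ is a toroidal subscheme.

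For the \textbf{permissible center} case, I would combine the previous paragraph with the observation that monomial ideals pull back to monomial ideals under any morphism of toroidal schemes (since $f^\flat : \cM_X \to \cM_Y$ sends monomials to monomials). Locally at $x$, write $I_Z = (t_1, \dots, t_l, J)$ with $t_1, \dots, t_l$ part of a regular family of parameters and $J$ a monomial ideal. Then $I_Z \cO_Y = (f^*t_1, \dots, f^*t_l, J\cO_Y)$; by the toroidal-subscheme argument $f^*t_1, \dots, f^*t_l$ extend to a regular family of parameters of $\cO_{Y,y}$, and $J\cO_Y$ is monomial, so $Z \times_X Y$ is again an intersection of a toroidal subscheme and a monomial subscheme, i.e.\ a permissible center.

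The \textbf{main obstacle} is the assertion that under a logarithmically smooth $f$, the pullbacks of coordinates $t_1, \dots, t_l$ that are part of a regular family of parameters on $X$ remain part of a regular family of parameters on $Y$; once $X$ and $Y$ are put into standard charts this is a statement about the structure of logarithmically smooth morphisms of logarithmically regular schemes, and the care needed is in handling the monoid part (the $t_i$ coming from $\cM_X$) versus the ``smooth'' part uniformly, and in checking that enlarging $t_1, \dots, t_l$ to a full parameter system on $Y$ can be done compatibly. This is essentially bookkeeping with Kato's charts, so I expect no real difficulty beyond being careful, but it is where the content lies.
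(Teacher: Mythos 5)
Your proposal is correct and follows essentially the same route as the paper: localize, invoke Lemma~\ref{chartlem}, show that the pullbacks of the regular coordinates $t_1,\dots,t_l$ remain part of a regular family of parameters on $Y$, and observe that monomial ideals pull back to monomial ideals. The only cosmetic difference is that the paper justifies the key step by noting that $f$ induces \emph{smooth} morphisms between the logarithmic strata (so regular parameters of the stratum pull back to part of a regular system of parameters) rather than unwinding Kato's charts; note also that the $t_i$ of Lemma~\ref{chartlem} are by construction lifts of parameters of the logarithmic stratum, so your case split on whether they lie in $\oM_x$ is unnecessary.
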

\begin{proof}
Note that $f$ induces smooth morphisms between logarithmic strata of $Y$ and $X$. It follows that if $t_1\.t_n$ are regular coordinates at $x\in X$ then their pullbacks form a part of a family of regular coordinates at a point $y\in f^{-1}(x)$. In view of Lemma~\ref{chartlem}, this implies the claim about toroidal subschemes. Since pullback of a monomial subscheme is obviously monomial, we also obtain the claim about permissible centers.
\end{proof}

\subsection{Permissible blowings up}

\subsubsection{The model case}\label{blowupsec}
We will prove that permissible centers give rise to normalized blowings up of toroidal schemes in the sense of \S\ref{blowtorsec}. This can be done very explicitly in the model case when $X=\bfA_M^n=\Spec(B[M,t_1\.t_n])$, where $B$ is an arbitrary regular ring, $M$ is a toric monoid, and $I=(t_1\.t_n,m_1\.m_r)$ for $m_i\in M$. For the sake of illustration we consider this case separately. Let $X'=Bl_I(X)^\nor$ be the normalized blowing up of $X$ along $I$. We have two types of charts:

(1) The $t_i$-chart is $\bfA^{n-1}_N=\Spec(B[N,\frac{t_1}{t_i}\.\frac{t_n}{t_i}])$, where $N$ is the saturation of the submonoid of $M\oplus\ZZ t_i$ generated by $M$, $t_i$ and the elements $m_1-t_i\.m_r-t_i$. In particular, for any point $x'$ of the chart with image $x\in X$ one has that $\rk(\oM_{y'}\le\rk(\oM_x)+1$. The monoid $N$ is still sharp.

(2) The $m_j$-chart is $\bfA^{n}_P=\Spec(B[P,\frac{t_1}{m_j}\.\frac{t_n}{m_j}])$, where $P$ is the saturation of the submonoid of $M^\gp$ generated by $M$ and the elements $m_1-m_j\.m_r-m_j$. In particular, the rank does not increase on this chart: $\rk(\oM_{x'})\le\rk(\oM_x)$ for any point $x'$ sitting over $x\in X$. The monoid $P$ may not be sharp.

\subsubsection{The general case}
One can deal with the general case similarly by reducing to formal charts, but this is slightly technical, especially in the mixed characteristic case. A faster way is to play with the toroidal structure, reducing to the known properties of toroidal blowings up.

\begin{lemma}\label{permislem}
Assume that $(X,U)$ is a toroidal scheme and $f\:X'\to X$ is the normalized blowing up along a permissible center $Z\into X$, and set $U'=f^{-1}(U\setminus Z)$. Then $(X',U')$ is a toroidal scheme and hence $f$ underlies a normalized blowing up of toroidal schemes.
\end{lemma}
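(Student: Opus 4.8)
The plan is to reduce to the case of a \emph{monomial} center after enlarging the toroidal structure, invoke the known behavior of toroidal (subdivision‑type) blowings up, and then descend back to the original toroidal structure. Since logarithmic regularity and the identification of the triviality locus are \'etale‑local on $X$, and the normalized blowing up commutes with the relevant localizations, I may assume $X$ is local. By Corollary~\ref{increasecor}, after replacing $U$ by $W=U\setminus\bigcup_{i=1}^lV(t_i)$ --- where $I=(t_1\.t_l,J)$ with $t_1\.t_l$ a partial regular family of parameters of the closed logarithmic stratum and $J$ monomial for that stratum --- the center $Z$ becomes a monomial subscheme of the toroidal scheme $(X,W)$; equivalently, $I$ is now a monomial ideal of $(X,W)$. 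The morphism $f\:X'\to X$ depends only on the ideal $I\subseteq\cO_X$, not on the logarithmic structure, so it is unchanged by this replacement.

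The next step is to show that $(X',f^{-1}(W))$ is toroidal. Since $I$ is monomial, $Z=V(I)$ is a union of closures of logarithmic strata of $(X,W)$, so in particular $Z\subseteq X\setminus W$ and $f^{-1}(W)=f^{-1}(W\setminus Z)$. One then reduces \'etale‑locally (or after completion) to the model case of \S\ref{blowupsec}: $(X,W)$ has the form $\bfA^{n}_{M}$, $I$ is generated by monomials of $M$, and the explicit charts computed there are again of the form $\bfA^{*}_{*}$ with toric (saturated, finitely generated) monoids, hence toroidal, with triviality locus the preimage of the open torus. This is the ``known property of toroidal blowings up'' we invoke: the normalized blowing up of a toroidal scheme along a monomial ideal is a subdivision‑type toroidal blowing up, so $(X',f^{-1}(W))$ is toroidal and its logarithmic boundary is $f^{-1}(X\setminus W)$, a divisor containing the exceptional divisor $E$ (as $f(E)=Z\subseteq X\setminus W$).

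It remains to return from $W$ to $U$. The irreducible components of $f^{-1}(X\setminus W)$ are $E$ together with the strict transforms of the components of $X\setminus W$; among the latter are the strict transforms $\widetilde{V(t_i)}$ of the divisors $V(t_i)$, which are strata closures of $(X,W)$ and hence, a subdivision sending strata to strata, are boundary components of $(X',f^{-1}(W))$. Now $U'=f^{-1}(U\setminus Z)=f^{-1}(U)\setminus f^{-1}(Z)$ differs from $f^{-1}(W)$ exactly along the $\widetilde{V(t_i)}$, so the boundary divisor $X'\setminus U'$ equals $f^{-1}(X\setminus W)$ with the components $\widetilde{V(t_i)}$ deleted, i.e.\ $f^{-1}(X\setminus U)\cup E$ --- precisely the divisor required in \S\ref{blowtorsec}. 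Deleting such boundary components from a logarithmically regular scheme again yields a logarithmically regular scheme: this is the operation inverse to Lemma~\ref{increaselem} (the $t_i$ split off an $\NN^{l}$ summand of the monoid of $(X,W)$, and in each chart of \S\ref{blowupsec} one checks the same splitting for $\widetilde{V(t_i)}$). Hence $(X',U')$ is a toroidal scheme with the stated boundary, and $f$ underlies a normalized blowing up of toroidal schemes.

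I expect the main obstacle to be the bookkeeping of the last step: verifying that in passing from $(X',f^{-1}(W))$ back to $(X',U')$ it is exactly the divisors $\widetilde{V(t_i)}$ that leave the boundary while the original boundary $f^{-1}(X\setminus U)$ and the exceptional divisor $E$ stay, and that removing them indeed preserves logarithmic regularity. The only other point requiring care is the input that a normalized blowing up along a monomial ideal is toroidal; one either extracts this from the chart computation of \S\ref{blowupsec} (at the cost of reducing to formal/\'etale models, which is delicate in mixed characteristic) or cites it from toroidal geometry (e.g.\ \cite{KKMS}).
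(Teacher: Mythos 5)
Your proposal follows essentially the same route as the paper's proof: localize, enlarge the toroidal structure by deleting the divisors $V(t_i)$ from $U$ (Lemma~\ref{increaselem}/Corollary~\ref{increasecor}) so that the center becomes monomial, invoke the toroidality of the resulting toroidal (monomial) blowing up, and then remove the strict transforms $D'_i$ of the $V(t_i)$ from the boundary, verifying on the charts of \S\ref{blowupsec} that each $D'_i$ splits off an $\NN$-summand of the chart monoid (equivalently, is Cartier), which is exactly the paper's final step. The only difference is presentational: the paper packages the last verification as ``$D'_i$ is Cartier'' via \cite[Theorem~2.3.15]{AT1} and checks the splitting $\oM_{x'}=Q\oplus(t_i-y)\NN$ chart by chart, which is the bookkeeping you correctly flag as the remaining work.
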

\begin{proof}
The question is local on $X$, so we can assume that $X=\Spec(A)$ is a local scheme with closed point $x$. Then $Z=V(t_1\.t_l,m_1\.m_r)$, where $m_i$ are monomials and $t_1\.t_n$ is a family of regular parameters of the logarithmic stratum through $x$. Set $W=U\setminus\cup_{i=1}^l V(t_i)$, then $(X,W)$ is toroidal by Lemma~\ref{increaselem} and $Z$ is a monomial subscheme of $(X,W)$. Set $W'=f^{-1}(W\setminus Z)$, then $(X',W')$ is toroidal and the toroidal blowing up $(X',W')\to(X,W)$ is a toroidal morphism, see \cite[Section~4]{Niziol} for proofs or \cite[Lemma~4.3.3]{AT1} for a summary. Furthermore, $X'\setminus U'$ is obtained from $X'\setminus W'$ by removing the strict transforms $D'_i$ of $D_i=V(t_i)$, so we should prove that this operation preserves the toroidal property. By \cite[Theorem~2.3.15]{AT1} it suffices to prove that each $D'_i$ is a Cartier divisor.

Now choose $y\in(t_1\.t_l,m_1\.m_r)$ and let study the situation on the $y$-chart $X'_y$. We claim that the inclusion $D'_i|_{X'_y}\into V(\frac{t_i}y)$ is an equality and hence $D'_i$ is Cartier, as required. If $y=t_i$ there is nothing to prove, so assume that $y\neq t_i$. It suffices to show that $V(\frac{t_i}y)$ is integral. So, for any $x'\in X'_y$ it suffices to prove that $\oM_{x'}$ splits as $Q\oplus (t_i-y)\NN$. To compute $\oM_{x'}$ we recall that toroidal blowings up are base changes of toric blowings up of the charts. In particular, $X'\to X$ is the base change of the blowing up of $\Spec(\ZZ[M,t_1\.t_l])$ along the ideal generated by $(t_1\.t_l,m_1\.m_r)$. The latter was computed in \S\ref{blowupsec}, and we saw that, indeed, its charts are of the form $\Spec(\ZZ[Q,\frac{t_i}y])$.
\end{proof}

\subsubsection{Functoriality}
In the sequel, by a {\em permissible blowing up} we mean the normalized blowing up along a permissible center. To simplify the notation, we will omit the normalization and will simply write $Bl_I(X)$. Naturally, permissible blowings up are compatible with logarithmically smooth morphisms.

\begin{lemma}\label{functorblowup}
Let $X$ be a logarithmic manifold and let $Z\into X$ be a permissible center. Then for any logarithmically smooth morphisms $f\:Y\to X$ of toroidal schemes, the pullback $T=Z\times_XY$ is a permissible center and $Bl_T(Y)=Bl_Z(X)\times_XY$ in the category of fs logarithmic schemes.
\end{lemma}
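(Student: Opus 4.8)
The statement that $T=Z\times_X Y$ is a permissible center is precisely Lemma~\ref{centersfunct}, so the only content is the identity $Bl_T(Y)=Bl_Z(X)\times_X Y$, in which, as in \S\ref{blowupsec}, $Bl$ abbreviates the normalized blowing up. I would argue locally on $X$, and then locally on $Y$: so assume $X=\Spec(A)$ is local with closed point $x$, fix a regular family of parameters $t_1\.t_n$ of the closed logarithmic stratum $C$ through $x$, and write $Z=V(t_1\.t_l,J)$ with $J$ a monomial ideal. The plan is to first reduce to a monomial center by \emph{playing with the toroidal structure}, exactly as in the proof of Lemma~\ref{permislem}: put $W=U\setminus\bigcup_{i=1}^l V(t_i)$, so that $(X,W)$ is toroidal (Lemma~\ref{increaselem}) and $Z$ becomes monomial in $(X,W)$ (Corollary~\ref{increasecor}), and correspondingly put $W_Y:=V\setminus\bigcup_{i=1}^l V(f^*t_i)$ on $Y$.

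The point of this reduction is that it is compatible with $f$. Since $f$ is logarithmically smooth it induces smooth morphisms of logarithmic strata, so (as already used in the proof of Lemma~\ref{centersfunct}) the $f^*t_i$ restrict to part of a regular family of parameters of the relevant stratum of $Y$; hence $(Y,W_Y)$ is toroidal, $T$ is monomial in $(Y,W_Y)$, and, adjoining the $t_i$ and the $f^*t_i$ as new monoid generators on source and target which match under a chart of $f$, the morphism $f\:(Y,W_Y)\to(X,W)$ is again logarithmically smooth. Now I would invoke the functoriality of toroidal (monomial) blowings up: locally such a blowing up is the base change of a toric blowing up of the chart $\Spec(\ZZ[M,t_1\.t_l])$ made explicit in \S\ref{blowupsec}, and toric, hence toroidal, blowings up commute with logarithmically smooth base change in the category of fs logarithmic schemes; see \cite[Section~4]{Niziol} or \cite[Lemma~4.3.3]{AT1}. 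This gives $Bl_T(Y,W_Y)=Bl_Z(X,W)\times_X Y$ in fs logarithmic schemes.

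It remains to pass back to the original toroidal structures. By the proof of Lemma~\ref{permislem}, the triviality locus $U'$ of $Bl_Z(X)$ is obtained from that of $Bl_Z(X,W)$ by deleting the strict transforms $D_i'$ of the Cartier divisors $D_i=V(t_i)$, and each $D_i'$ is again Cartier; the same holds on $Y$ for the divisors $V(f^*t_i)$. Because $f$ is flat (being logarithmically smooth), strict transforms commute with this base change; this can be read off directly from the charts of \S\ref{blowupsec}, since on the $y$-chart $D_i'$ is cut out by $t_i/y$, whose pullback cuts out the strict transform of $V(f^*t_i)$. Hence deleting the same family of Cartier divisors from both sides of $Bl_T(Y,W_Y)=Bl_Z(X,W)\times_X Y$, and using that a toroidal structure is recovered from its triviality locus, yields $Bl_T(Y)=Bl_Z(X)\times_X Y$.

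The genuine difficulty, I expect, is the bookkeeping hidden in the phrase ``in the category of fs logarithmic schemes'': fs fiber products need not agree with scheme-theoretic ones, and neither normalization nor the formation of strict transforms respects arbitrary base change. My way around this is the reduction above to the explicit toric charts of \S\ref{blowupsec}, where the blowing up, its normalization, and the base change along $f$ are all transparent; with that in hand the remaining verifications are routine monoid-theoretic computations, and there is nothing essentially new beyond what is already done in the proof of Lemma~\ref{permislem}.
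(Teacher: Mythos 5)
Your proposal is correct and follows essentially the same route as the paper's proof: reduce to a local situation, enlarge the toroidal structure on $X$ and compatibly on $Y$ so that the center becomes monomial, check that $(Y,W_Y)\to(X,W)$ remains logarithmically smooth, and conclude by the compatibility of toroidal blowings up with logarithmically smooth morphisms. Your final paragraph on returning to the original triviality loci by deleting the (Cartier) strict transforms of the $V(t_i)$ spells out a step the paper leaves implicit, and is a worthwhile addition rather than a deviation.
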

\begin{proof}
We know that $T$ is permissible by Lemma~\ref{centersfunct}. The problem is local on $X$ hence we can assume that $X$ is local. As in the proof of Lemma~\ref{permislem}, $Z=V(t_1\.t_l,m_1\.m_r)$ and $Z$ becomes monomial once we replace $U=X(0)$ by $U'=U\setminus\cup_{i=1}^l V(t_i)$. Since the pullbacks of $t_i$ form a subfamily of a regular family at any point of $f^{-1}(x)$, we also have that $V'=Y(0)\setminus \cup_{i=1}^lf^{-1}(V(t_i))$ defines a toroidal structure and $T$ is toroidal on $(Y,V')$. We omit the easy check that the morphism $(Y,V')\to(X,U')$ is logarithmically smooth. The lemma now follows from the fact that toroidal blowings up are compatible with logarithmically smooth morphisms.
\end{proof}

\subsection{Kummer ideals}
In \cite{ATW-principalization} we will also use a generalization of permissible blowings up that we are going to define now. Informally speaking, we will blow up ``ideals" of the form $(t_1\.t_n,m_1^{1/d}\.m_r^{1/d})$. Our next aim is to formalize such objects, and the main task is to define ``ideals" $(m^{1/d})$.

\subsubsection{Ideals $I^{[1/d]}$}
First, let us describe the best approximation to extracting roots on the logarithmic manifold itself. For any monomial ideal $I$ and $d\ge 1$ let $I^{[1/d]}$ denote the monomial ideal $J$ generated by monomials $m$ with $m^d\in I$. Recall that monomial ideals are in a one-to-one correspondence with the ideals of $\oM_X$. If $I$ corresponds to $J\subseteq\oM_X$ then $I^{[1/d]}$ corresponds to $\frac{1}{d}J\cap\oM_X$. So, extracting the root is a purely monomial operation, and hence it is compatible with strict morphisms $f\:Y\to X$ in the sense that $(f^{-1}(I))^{[1/d]}=f^{-1}\left(I_X^{[1/d]}\right)$.

\begin{remark}\label{rootidealrem}
It may happen that $I$ is invertible but $I^{[1/d]}$ is not. On the level of monoids this can be constructed as follows: take $M\subset\ZZ^2$ given by $x+y\in 3\ZZ$ and $I=(3,3)+M$. Then $I^{[1/3]}$ is generated by $(1,2)$ and $(2,1)$ and it is not principal.
\end{remark}

\subsubsection{Kummer monomials}
By a {\em Kummer monomial} on a logarithmic scheme $X$ we mean a formal expression $m^{1/d}$ where $m$ is a monomial on $X$ and $d\ge 1$ is an integer which is invertible on $X$. In order to view $m^{1/d}$ as an actual function we should work locally with respect to a certain log-\'etale topology. For example, $X[m^{1/d}]:=(X\otimes_{k[m]}k[m^{1/d}])^\sat$ is the universal fs logarithmic scheme over $X$ on which $m^{1/d}$ is defined, and $X[m^{1/d}]\to X$ is logarithmically \'etale by our assumption on $d$.

\begin{remark}
One can also consider roots with a non-invertible $d$ but then the morphism $X[m^{1/d}]\to X$ is only logarithmically syntomic, i.e. logarithmically flat and lci. We prefer to exclude such cases because we will later consider only toroidal schemes, and logarithmic regularity is not local with respect to the log-syntomic topology.
\end{remark}

\subsubsection{Kummer topology}\label{Sec:Kummer-topology}
In order to define operations on different monomials one has to pass to larger covers of $X$, and there are two ways to do this uniformly. The first one is to consider the pro-finite coverings and work with structure sheaves on non-noetherian schemes, see \cite{Talpo-Vistoli}. Another possibility is to work with the structure sheaf of a topology generated by finite coverings. The two approaches are equivalent. We adopt the second one using the Kummer logarithmically \'etale topology defined by Nizio{\l}  in \cite{Niziol-K-theory-of-log-schemes-I}. For brevity, it will be called the Kummer topology.

Recall that a logarithmically \'etale morphism $f\:Y\to X$ is called {\em Kummer} if for any point $y\in Y$ with $x=f(y)$ the homomorphism $\oM^\gp_x\to\oM^\gp_y$ is injective with finite cokernel, and $\oM_y$ is the saturation of $\oM_x$ in $\oM^\gp_y$. Setting surjective Kummer morphisms to be coverings, we obtain a {\em Kummer topology} on the category of fs logarithmic schemes. The site of Kummer logarithmic schemes over $X$ will be denoted $X\ket$. The following lemma shows that when working with the Kummer topology it suffices to consider two special types of coverings. The proof is simple, and we refer to \cite[Corollary~2.17]{Niziol-K-theory-of-log-schemes-I} for details.

\begin{lemma}\label{Kummercovering}
The topology of $X\ket$ is generated by two types of coverings: strict \'etale morphisms $Z\to Y$ and morphisms of the form $Y[m^{1/d}]\to Y$.
\end{lemma}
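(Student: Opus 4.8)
The strategy is to reduce a general surjective Kummer logarithmically \'etale covering $f\:Y\to X$ to a composition of the two distinguished types by working \'etale-locally on $X$ and using the existence of fs charts. First I would recall that any fs logarithmic scheme $X$ admits, \'etale-locally, a chart $X\to\Spec(\ZZ[P])$ with $P$ a toric monoid modelling $\oM_x$ at a given point; since everything in sight is local for the strict \'etale topology, it suffices to prove the statement after such a base change, i.e.\ we may assume $X$ carries a global fs chart by a sharp toric monoid $P$. The point of the Kummer condition is that the map on sharpened monoids $\oM_x^\gp\to\oM_y^\gp$ is injective with finite cokernel and $\oM_y$ is the saturation of (the image of) $\oM_x$; thus the monoid side of $f$ is, locally, modelled by an inclusion $P\hookrightarrow Q$ of toric monoids with $Q^\gp/P^\gp$ finite and $Q=P^\gp_Q{}^\sat$ inside $Q^\gp$. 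Such an inclusion can be refined: choosing generators $\overline m_1,\dots,\overline m_r$ of $Q$ and integers $d_1,\dots,d_r$ (invertible on $X$, by the Kummer hypothesis) with $d_i\overline m_i\in P$, the monoid $Q$ is contained in the saturation of the monoid generated by $P$ and the elements $\tfrac1{d_i}(d_i\overline m_i)$. Iterating the one-root construction $Y\rightsquigarrow Y[m^{1/d}]$ finitely many times therefore produces an fs logarithmic scheme $X'$ over $X$, obtained by a finite tower of coverings of the second type, whose chart monoid surjects onto the chart monoid of $Y$.

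Next I would compare $X'$ with $Y$. By construction the monoid maps agree, so the induced morphism $Y\times_X X'\to X'$ (or the appropriate component of it) is strict; a strict logarithmically \'etale morphism is the same thing as a classically \'etale morphism of the underlying schemes. Hence, after the tower of $m^{1/d}$-coverings, what remains of $f$ is captured by a strict \'etale map, which is a covering of the first type. Since surjectivity is preserved under all these operations and can be arranged component by component, this shows that $f$ is refined by a composition of coverings of the two listed types, which is exactly the assertion that these two types generate the Kummer topology. The only subtlety is bookkeeping: one must check that the saturations taken at each stage do not introduce new strata or fail to be dominated, but this is guaranteed precisely by the Kummer conditions (injectivity with finite cokernel on groups, and saturation), and by the invertibility of the $d_i$ which keeps every step log-\'etale rather than merely log-flat.

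The main obstacle I anticipate is not any deep geometric input but the careful reduction to charts: one has to know that fs charts exist \'etale-locally and are functorial enough that a Kummer morphism can be promoted to an honest inclusion of toric monoids together with a strict \'etale remainder, uniformly in a neighborhood of a point. Once that local normal form is in place, the argument is the elementary monoid computation above. For this reason the cleanest writeup simply cites \cite[Corollary~2.17]{Niziol-K-theory-of-log-schemes-I}, where the chart-theoretic details are carried out; the proof is a sketch of that reduction rather than a self-contained computation.
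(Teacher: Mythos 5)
Your proposal is correct and follows the same route as the paper, which likewise defers the chart-level details to \cite[Corollary~2.17]{Niziol-K-theory-of-log-schemes-I}; your sketch of the reduction (étale-local charts, a tower of root extractions $Y[m^{1/d}]\to Y$ realizing the Kummer monoid extension, and a strict --- hence classically étale --- remainder via the fs fiber product) is exactly the argument behind that citation.
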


\subsubsection{The structure sheaf}
The rule $Y\mapsto\Gamma(\cO_Y)$ defines a presheaf of rings $\cO_{X\ket}$ on $X\ket$.

\begin{lemma}
The presheaf $\cO_{X\ket}$ is a sheaf.
\end{lemma}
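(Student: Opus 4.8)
The plan is to reduce the sheaf axiom on $X\ket$ to two model cases using Lemma~\ref{Kummercovering}, which asserts that the Kummer topology is generated by strict \'etale coverings $Z\to Y$ and by standard root coverings $Y[m^{1/d}]\to Y$. Since a general covering in $X\ket$ refines to a composite of coverings of these two types, and the sheaf property is stable under refinement and composition of covering families (a formal consequence of the fact that a presheaf is a sheaf for a topology iff it is a sheaf for a generating pretopology), it suffices to verify descent along each of the two generating types. Concretely, I would first recall that for a presheaf $F$ the sheaf condition for a cover $\{U_i\to U\}$ is the exactness of
\[
\Gamma(U,F)\longrightarrow \prod_i \Gamma(U_i,F)\toto \prod_{i,j}\Gamma(U_i\times_U U_j,F),
\]
where in $X\ket$ the fiber products are taken in the category of fs logarithmic schemes.

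For the first type, a strict \'etale covering $Z\to Y$ is, after forgetting logarithmic structure, an ordinary \'etale covering of schemes, and the fiber products $Z\times_Y Z$ in fs logarithmic schemes agree with the scheme-theoretic ones because strict morphisms are stable under base change and the saturation/fs conditions impose nothing new. Hence exactness of the sequence above is just ordinary faithfully flat (indeed \'etale) descent for the structure sheaf $\cO$ on schemes, which is classical. For the second type, the covering $Y[m^{1/d}]\to Y$ is, Zariski-locally on $Y$ where a chart presenting $m$ is available, the base change of the standard Kummer cover $\Spec k[m^{1/d}]\to\Spec k[m]$, i.e. Zariski-locally it looks like $\Spec A[s]/(s^d-a)\to \Spec A$ with $a$ the image of $m$ and $d$ invertible. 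Here $Y[m^{1/d}]\times_Y Y[m^{1/d}]$, computed in fs logarithmic schemes, is $\coprod_{\zeta^d=1} Y[m^{1/d}]$ (the $d$-th roots of unity are present \'etale-locally since $d$ is invertible), and the descent sequence becomes the assertion that $A\to A[s]/(s^d-a)$ is an equalizer for the two maps $s\mapsto s$, $s\mapsto \zeta s$; this is elementary since $A[s]/(s^d-a)=\bigoplus_{i=0}^{d-1} A s^i$ is a free $A$-module and the only elements fixed by all the automorphisms $s\mapsto\zeta s$ are those in the degree-$0$ part $A$. One must also handle separatedness of the site (the disjoint-union/empty-cover conventions), which is routine.

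The one genuinely delicate point, and the step I expect to be the main obstacle, is checking that \emph{fiber products in the category of fs logarithmic schemes behave as claimed along the root coverings} — in particular that $Y[m^{1/d}]\times_Y Y[m^{1/d}]$ is a disjoint union of copies of $Y[m^{1/d}]$ rather than something with extra (non-reduced or non-fs) structure. This requires keeping track of the saturation operation: the naive tensor product $\cO_Y[s,s']/(s^d-m,\,s'^d-m)$ needs to be localized/saturated, and one uses that $s/s'$ becomes a unit whose $d$-th power is $1$, together with the invertibility of $d$, to split it. Once this computation of the overlaps is pinned down, the sheaf axiom in both cases reduces to the standard commutative-algebra facts above. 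I would also remark that this lemma is essentially due to Nizio\l\ and point to \cite{Niziol-K-theory-of-log-schemes-I}, so the proof can be kept brief, emphasizing the reduction via Lemma~\ref{Kummercovering} and the two explicit descent checks.
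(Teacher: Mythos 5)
Your proposal is correct and follows essentially the same route as the paper: reduce via Lemma~\ref{Kummercovering} to the two generating types of coverings, invoke ordinary \'etale descent for strict \'etale covers, and for the root cover $Y[m^{1/d}]\to Y$ identify the saturated self-fiber-product with $\mu_d\times Y[m^{1/d}]$ so that the equalizer condition becomes the statement that the $\mu_d$-invariants of $\cO(Y[m^{1/d}])$ are $\cO(Y)$. The paper's version is slightly terser (it states the invariants and fiber-product facts without the explicit $A[s]/(s^d-a)$ model or the weight-space decomposition), but the content is the same, including the attribution to Nizio{\l}.
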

\begin{proof}
A more general claim is proved in \cite[Proposition~2.18]{Niziol-K-theory-of-log-schemes-I}. Let us outline a simple argument that works in our case. It suffices to check the sheaf condition for the two coverings from Lemma~\ref{Kummercovering}. The first case is clear since $\cO_{X\et}$ is a sheaf. In the second case we note that $\mu_d$ acts on $Y'=Y[m^{1/d}]$ and $Y$ is the quotient, in particular, $\cO_Y(Y')^{\mu_d}=\cO_Y(Y)$. The saturated fiber product $Y''=(Y'\times_YY')^\sat$ equals to $\mu_d\times Y'$, hence the coequalizer of  $\cO_Y(Y'')\toto\cO_Y(Y')$ equals to $\cO_Y(Y')^{\mu_d}$, that is, $\cO_Y$ satisfies the sheaf condition with respect to the covering $Y'\to Y$.
\end{proof}

\subsubsection{Kummer ideals}
By a Kummer ideal we mean an ideal $I\subseteq\cO_{X\ket}$ which is coherent in the following sense: there exists a Kummer covering $Y\to X$ and a coherent ideal $I_Y\subseteq\cO_Y$ such that $I|_{Y\ket}$ is generated by $I_Y$ in the sense that $\Gamma(Z,I)=\Gamma(Z,I_Y\cO_Z)$ for any Kummer morphism $Z\to Y$.

\begin{example}\label{Qidealexam}
(i) If $I_X$ is a monomial ideal on $X$ let $I$ be the associated ideal on $X\ket$ and for $Y$ Kummer over $X$ let $I_Y$ denote restrictions of $I$ onto $Y$. Given  $d\ge 1$ define $J=I^{1/d}$ by $J_Y=(I_Y)^{[1/d]}$. Note that the projections $p_{1,2}$ of $Z=(Y\times_XY)^\sat$ onto $Y$ are strict. Hence $p_i^{-1}(J_Y)=J_Z$ for $i=1,2$, and we obtain that the pullbacks are naturally isomorphic, that is, $J$ is an ideal in $\cO_{X\ket}$. Moreover, $J$ is coherent because one can easily construct a covering $Y\to X$ such that $I_Y=J_Y^d$ and then $J_Z=J_Y\cO_Z$ for any Kummer morphism $Z\to Y$. For example, choose an open covering $X=\cup_i X_i$ such that the ideals $I|_{X_i}=(m_i)$ are principal and take $Y=\coprod_i X_i[m_i^{1/d}]$.

(ii) One can produce more ideals using addition and multiplication, ideals coming from $\cO_X$, and Kummer ideals from (i). For example, if $t_i\in\Gamma(\cO_X)$ and $m_j$ are global monomials then the ideal $J=(t_1\.t_n,m_1^{1/d}\.m_r^{1/d})$ is a well-defined coherent Kummer ideal, as well as its powers $J^l$.
\end{example}

\begin{remark}\label{Qidealrem}
(i) It is very essential that we are working with saturated logarithmic schemes and the Kummer topology. For example, if $X=\Spec(k[t])$ and $X\fl$ denotes the small flat site of $X$ then by the usual flat descent $\cO_{X\fl}$ is a sheaf and any its ideal comes from an ideal of $\cO_X$. In particular, the ideal $t\cO_{X\fl}$ is not a square. This happens for the following reason: although $(t)=(y^2)$ on the double covering $Y=\Spec(k[y])\to X$ with $y^2=t$, the fiber product $Z=Y\times_XY$ equals to $\Spec(k[y_1,y_2]/(y_1^2-y_2^2))$ and the two pullbacks of $(y)$ to $Z$ are different: $(y_1)\neq(y_2)$. In other words, the root $(y)=\sqrt{(t)}$ is not unique locally on $X\fl$ and hence does not give rise to an ideal.

(ii) The sheaf $\cO_{X\ket}$ also has non-coherent ideals. For example, for $X=\Spec(k[m])$ the maximal monomial ideal $\sum_{i=1}^\infty(m^{1/d})$ is not finitely generated.
\end{remark}

\subsection{Blowings up of permissible Kummer ideals}

\subsubsection{Permissible Kummer centers}\label{Sec:permissible-center}
We restrict our consideration to toroidal schemes. Permissible centers extend to Kummer ideals straightforwardly: we say that a Kummer ideal $I$ on a toroidal scheme $X$ is {\em permissible} if it is generated by the ideal of a toroidal subscheme and a monomial Kummer ideal. In other words, for any point $x\in X$ one has that $I_x=(t_1\.t_n,m_1^{1/d}\.m_r^{1/d})$, where $t_1\.t_n$ is a part of a regular sequence of parameters, $n$ is invertible on $X$, and $m_1\.m_r$ are monomials. By $V(I)$ we denote the set of points of $X$ where $I$ is not the unit ideal; it is a closed subset of $X$.

\subsubsection{Kummer blowings up: global quotient case}
Let $I$ be a permissible Kummer center on $X$. The idea of defining $Bl_I(X)$ is to blow up a sufficiently fine Kummer covering of $X$ and then descend it to a modification of $X$.

Assume first that there exists a $G$-Galois Kummer covering $Y\to X$ such that $I$ is generated by $I_Y$. Note that $X=Y/G$. Locally, $I_Y$ is generated by monomials and elements coming from $I$. Since $G$ acts by characters on monomials and preserves elements coming from $I$, the ideal $I_Y$ and the blowing up $Y'=Bl_{I_Y}(Y)\to Y$ are $G$-equivariant. Moreover, using these generators we see that the blowing up $Y'$ is covered by $G$-equivariant affine charts. In particular, the algebraic space $X'=Y'/G$ is a scheme, and $X'\to X$ is a $W$-modification, where $W=X\setminus V(I)$; here a \emph{$W$-modification} $X'\to X$ is a modification restricting to the identity over the dense open $W\subset X$.

Note that $X'$ is the coarse space $[Y'/G]_\cs$ of the stack quotient $[Y'/G]$. We will show that $X'$ depends only on $X$ and $I$, but it may happen that $X'$ with the quotient logarithmic structure is not toroidal: see \S\ref{Kummerchart} below for a general explanation and Example~\ref{orbifoldexam} for a concrete example. On the other hand, $[Y'/G]$ is too close to $Y'$: the morphism $Y'\to[Y'/G]$ is \'etale hence $[Y'/G]$ is toroidal, but it is ramified over the same points of $X'$ over which $Y'$ is ramified, and hence depends on the choice of the covering $Y\to X$. Finally, we would like to ensure that the exceptional divisor $E$ on $[Y'/G]$ remains Cartier, in other words, we would like the morphism $[Y'/G] \to B\GG_m$ corresponding to the line bundle $\cO(E)$ to descend to our modification. For these reasons the main player in the sequel will be the relative coarsening $[Y'/G]_{\cs/B\GG_m}$ (see \S\ref{Sec:coarsening} and Remark~\ref{coarserem}). In particular, we will see that it is toroidal and independent of the choice of the covering $Y\to X$.

\begin{lemma}\label{Qblowlem0}
With the above notation, the $X$-stack $\cX'=[Y'/G]_{\cs/B\GG_m}$ and its coarse space $X'=Y'/G$ depend on $X$ and $I$ only, but not on the Kummer covering $Y\to X$.
\end{lemma}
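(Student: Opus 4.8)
The plan is to reduce the independence statement to a comparison of two Kummer coverings by passing to a common refinement, and then to use the inert-morphism functoriality established earlier, together with the universal-property characterization of relative coarsening from Section \ref{Sec:coarsening}. So suppose $Y\to X$ and $Y_1\to X$ are two $G$-Galois and $G_1$-Galois Kummer coverings through which $I$ is generated, and form the saturated fiber product $\tilY=(Y\times_X Y_1)^\sat$, which is a $(G\times G_1)$-Galois Kummer covering dominating both. First I would show it suffices to compare each of $Y$ and $Y_1$ with $\tilY$; that is, it is enough to treat the case $Y_1\to Y$ with $Y_1/H=Y$ for $H$ a subgroup of the Galois group acting freely on $Y_1$, and $G_1\to G$ the quotient homomorphism $\lambda$. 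In this situation the projection $[Y_1'/G_1]\to[Y'/G]$ is inert and $\lambda$-equivariant, where $Y_1'=Bl_{I_{Y_1}}(Y_1)$ and $Y'=Bl_{I_Y}(Y)$ — here one uses that $I_{Y_1}=I_Y\cO_{Y_1}$ since both are restrictions of the same Kummer ideal, and that normalized blowing up commutes with the strict étale (in fact finite étale, since $H$ acts freely) morphism $Y_1\to Y$, so that $Y_1'=Y'\times_Y Y_1$ and hence $[Y_1'/G_1]=[Y'/G]$ as stacks over $X$.

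The second step is to push this identification through the relative coarsening construction. Since $[Y_1'/G_1]\toisom[Y'/G]$ compatibly with the structure morphisms to $B\gm$ (the exceptional-divisor line bundles agree because the exceptional divisors of $Y_1'\to Y_1$ and $Y'\to Y$ correspond under $Y_1'=Y'\times_Y Y_1$), the relative coarse spaces $[Y_1'/G_1]_{\cs/B\gm}$ and $[Y'/G]_{\cs/B\gm}$ coincide by the uniqueness part of the universal property of relative coarsening (Remark \ref{coarserem} together with \cite[Theorem~3.2]{AOV2}, or equivalently Lemma \ref{coarselem} applied relatively over $B\gm$). Likewise $Y_1'/G_1 = (Y'\times_Y Y_1)/(G\times_? \cdots) = Y'/G$ as algebraic spaces, which gives the coarse-space claim $X' = Y'/G$ is well defined. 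Running this through both legs $\tilY\to Y$ and $\tilY\to Y_1$ yields canonical isomorphisms $[Y'/G]_{\cs/B\gm}\toisom[\tilY'/\tilG]_{\cs/B\gm}\toisom[Y_1'/G_1]_{\cs/B\gm}$ over $X$, as desired.

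The main obstacle I anticipate is not the two-out-of-three reduction, which is formal, but verifying cleanly that the normalized blowing up is compatible with the finite étale covering $Y_1\to Y$, i.e. that $Bl_{I_{Y_1}}(Y_1) = Bl_{I_Y}(Y)\times_Y Y_1$ \emph{including} normalization — this is standard for étale base change but deserves a sentence since normalization does not commute with arbitrary base change. A secondary point requiring care is that the relative coarsening $[-]_{\cs/B\gm}$ is being applied to stacks with finite relative inertia over $B\gm$: one must check that $[Y'/G]\to B\gm$ has finite relative inertia, which follows because $I_{[Y'/G]/B\gm}$ is a subgroup of the finite inertia $I_{[Y'/G]}$, so \cite[Theorem~3.2]{AOV2} applies and the relative coarsening exists and is compatible with the base change along $\tilY\to Y$ (an inert, hence representable flat, morphism after passing to an atlas). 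Granting these, the proof is a diagram chase using the uniqueness clauses in Theorem \ref{univth}(ii) and Lemma \ref{coarselem}.
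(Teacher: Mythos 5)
Your overall architecture --- reduce to a common refinement via the saturated fiber product, then compare each covering with the refinement --- matches the paper's. But there is a genuine gap at the heart of your second step: you assert that the intermediate group $H$ (the paper's $K$) acts \emph{freely} on $Y_1$, so that $Y_1\to Y$ is finite \'etale, $Y_1'=Y'\times_YY_1$, and $[Y_1'/G_1]\cong[Y'/G]$ as stacks over $X$. None of this holds. A morphism between two Kummer coverings of $X$ is itself a Kummer covering, i.e.\ Kummer logarithmically \'etale but ramified along the toroidal divisor; for instance with $X=\Spec(k[\pi])$, $Y=X$ and $Y_1=\Spec(k[\pi^{1/d}])$, the group $\mu_d$ fixes the origin of $Y_1$. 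Consequently the two blowings up are \emph{not} related by flat base change, and $[Y_1'/G_1]\to[Y'/G]$ is in general a nontrivial coarsening morphism rather than an isomorphism --- this is exactly why the lemma has content and why its conclusion is only an equality of relative coarse spaces over $B\GG_m$ (and of coarse spaces), not of the quotient stacks themselves.

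The paper's proof fills this in as follows: since $I_Z=I_Y\cO_Z$, the charts of $Bl_{I_Z}(Z)$ and $Bl_{I_Y}(Y)$ can be given by the same elements, so $Z'\to Y$ factors through a \emph{finite} morphism $Z'\to Y'$; normality of $Y'$ then gives $Z'/K=Y'$ --- a quotient statement, not a base-change statement --- producing a coarsening morphism $[Z'/H]\to[Y'/G]$. One then checks that the exceptional divisor on $[Z'/H]$ is the pullback of that on $[Y'/G]$, so the morphism to $B\GG_m$ factors through $[Y'/G]\to B\GG_m$, and therefore the two relative coarsenings over $B\GG_m$ coincide. Your final appeal to the uniqueness of relative coarsening is the right closing move, but it can only be made after this coarsening morphism and the compatibility of the $B\GG_m$-structures have been established; as written, your argument short-circuits the step where the actual work happens. (Your side worry about normalization commuting with base change is moot, since the base change in question is not \'etale to begin with.)
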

\begin{proof}
It suffices to deal with $\cX'$, since $X'$ is obtained from it. We should prove that if $Z\to X$ is another Kummer covering with Galois group $H$ and $Z'=Bl_{I_Z}(Z)$ then $[Z'/H]_{\cs/B\GG_m}=\cX'$. The family of Kummer coverings is filtered, hence it suffices to consider the case when $Z$ dominates $Y$. In this case, $Z/K=Y$ where $K$ is a subgroup of $H$ with $H/K=G$.

Since $I_Z=I_Y\cO_Z$, the charts of both $Bl_{I_Y}(Y)$ and $Bl_{I_Z}(Z)$ can be given by the same elements. It follows that $Z'\to Y$ factors through a finite morphism $Z'\to Y'$. Since $Y'$ is normal, this implies that $Z'/K=Y'$, and we obtain a coarsening morphism $h\:[Z'/H]\to[Y'/G]$. Clearly, the exceptional divisor on $[Z'/H]$ is the pullback of the exceptional divisor on $[Y'/G]$. Therefore the morphism $[Z'/H]\to B\GG_m$ factors through the morphism $[Y'/G]\to B\GG_m$, and this implies that $[Z'/H]_{\cs/B\GG_m}=[Y'/G]_{\cs/B\GG_m}$, as required.
\end{proof}

\subsubsection{Kummer blowings up: the general case}\label{Sec:Kummer-blowup}
In the general case, the Kummer blowing up of $X$ along $I$ are defined by gluing. Namely, $X$ is covered by open subschemes $X_i$ such that $I_i=I|_{X_i}$ is generated by global functions and roots of global monomials, and then each $X_i$ has a $G_i$-Kummer Galois covering $Y_i\to X_i$ such that $J_i=I_{Y_i}$ generates $I|_{Y_i}$. By Lemma~\ref{Qblowlem0} the stack $\cX'_i=[Bl_{J_i}(Y_i)/G_i]_{\cs/B\GG_m}$ and its coarse space $X'_i=Bl_{J_i}(Y_i)/G_i$ depend on $X_i$ and $I_{X_i}$ only. Moreover, the uniqueness of the factorizations $[Bl_{J_i}(Y_i)/G_i]\to\cX'_i\to X_i$ (see Theorem~\ref{coarseth}(ii)) implies that $\cX'_i$ glue over the intersections $X_i\cap X_j$. For example, since open immersions are inert morphisms we can use here Lemma~\ref{inertquot}. Thus, we obtain morphisms $\cX'\to X$ and $X'\to X$ depending only on $X$ and $I$. We say that $X':=Bl_I(X)$ is the {\em coarse Kummer blowing up} of $X$ along $I$ and $\cX'=[Bl_I(X)]$ is the {\em Kummer blowing up} of $X$ along $I$. Here are two basic properties of this operation.

\begin{theorem}\label{Qblowlem}
Assume that $(X,U)$ is a toroidal scheme and $I$ is a permissible center, and let $W=X\setminus V(I)$. Then

(i) $f\:[Bl_I(X)]\to X$ and $Bl_I(X)\to X$ are $W$-modifications of $X$,

(ii) $([Bl_I(X)],f^{-1}(U))$ is a simple toroidal orbifold.
\end{theorem}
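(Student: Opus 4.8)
The plan is to work throughout with the local model from the construction of $[Bl_I(X)]$: étale-locally on $X$ there is a $G$-Galois Kummer covering $p\:Y\to X$ with $G$ a finite diagonalizable group---in fact étale, since the roots extracted have order invertible on $X$---such that $I$ is generated by a coherent ideal $I_Y\subseteq\cO_Y$; I will write $Y'=Bl_{I_Y}(Y)$ for the normalized blowing up, $X'=Y'/G$, and $\cX'=[Y'/G]_{\cs/B\GG_m}$. All assertions are local on $X$ (toroidality is étale-local, and by construction the local pieces glue by Lemma~\ref{Qblowlem0}), so it suffices to treat this model and then glue.

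For (i), first note that $p$ is Kummer logarithmically étale and hence logarithmically smooth, so by Lemma~\ref{centersfunct} the ideal $I_Y$ is a permissible center on the toroidal scheme $Y$, and Lemma~\ref{permislem} shows that $Y'\to Y$ underlies a blowing up of toroidal schemes; in particular it is an isomorphism over $p^{-1}(W)$. Then $Y'\to X$ is proper (a blowing up followed by the finite morphism $p$), $Y'\to X'$ is finite surjective, and $Y'\to[Y'/G]$, $[Y'/G]\to\cX'$ are proper surjective (the last as a coarsening morphism, Lemma~\ref{coarselem}(ii)), so $X'\to X$ and $\cX'\to X$ are proper by descent. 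Over $W$ the exceptional divisor $E$ is empty, hence the morphism $[Y'/G]\to B\GG_m$ classifying $\cO(E)$ is trivial over $W$; since the relative coarse space commutes with flat base change, $\cX'\times_X W=([Y'/G]\times_X W)_\cs=p^{-1}(W)/G=W$ and likewise $X'\times_X W=W$. Thus $\cX'\to X$ and $X'\to X$ are $W$-modifications.

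For (ii) the formal points are that $\cX'$ is DM with finite diagonalizable inertia: $[Y'/G]$ is (as $G$ is finite étale), and $\cX'$ sits between $[Y'/G]$ and its coarse space $X'$ with inertia at each point a quotient of $G_{y'}$ that embeds in $\GG_m$---hence cyclic and étale---finiteness following from $f^*I_{\cX'}=I_{[Y'/G]}/I_{[Y'/G]/\cX'}$. The real content is toroidality. First, $[Y'/G]$ is a \emph{simple} toroidal orbifold: it is toroidal because $Y'\to[Y'/G]$ is étale and $Y'$ is toroidal, and it is simple because $G$ acts on $\cO_{Y'}$ by characters and, the generators $t_1\.t_l,\mu_1\.\mu_r$ of $I_Y$ being eigenvectors ($G$ fixing the $t_i$ and scaling each monomial $\mu_j$), $G$ preserves every chart of $Y'$ from \S\ref{blowupsec} and so acts trivially on the sharpened monoids $\oM_{Y'}$. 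Now the coarsening morphism $[Y'/G]\to\cX'$ has relative stabilizer at $y'$ equal to $G^E_{y'}:=\Ker(G_{y'}\to\GG_m)$, the kernel of the character by which $G_{y'}$ acts on the fibre of $\cO(E)$. The key claim is that $G^E_{y'}$ acts toroidally on $Y'$ at $y'$: on a $t_i$-chart $E=V(t_i)$ and $G$ fixes $t_i$, so $G^E_{y'}=G_{y'}$; on a $\mu_j$-chart $E=V(\mu_j)$, so $G^E_{y'}$ consists of the $g\in G_{y'}$ fixing the monomial $\mu_j$; in either case $g\in G^E_{y'}$ fixes $y'$ and acts on the chart by scaling monomials and trivially on the residual regular coordinates ($t_k/t_i$, resp.\ $t_k/\mu_j$), whence $g$ acts trivially on every monomial not vanishing at $y'$, hence trivially on the logarithmic stratum through $y'$ and on $\oM_{y'}$. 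So $G^E_{y'}\subseteq G^\tor_{y',Y'}$, and by the characterization of toroidal coarsenings in the proof of Theorem~\ref{modulith}(i) the morphism $[Y'/G]\to\cX'$ is a toroidal coarsening morphism; therefore $\cX'$ is a toroidal stack whose triviality locus is the image of that of $Y'$, i.e.\ $f^{-1}(U)$ in the blowing-up convention of \S\ref{blowtorsec} (note $E$ lies over $V(I)\subseteq X\setminus U$). Finally $\cX'$ is simple, its stabilizer $G_{y'}/G^E_{y'}$ acting on $\oM_{\cX'}\subseteq\oM_{Y'}$ through the trivial action of $G_{y'}$. Gluing over the covering of $X$ finishes the proof.

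The main obstacle is the toroidality claim of part (ii)---concretely, the chart-by-chart analysis sketched above. One must invoke the explicit description of the normalized Kummer blowing up from \S\ref{blowupsec}, identify the $\GG_m$-character through which the stabilizer acts on $\cO(E)$ separately on the $t_i$- and $\mu_j$-charts, and check that its kernel acts toroidally; the mechanism making this work is that an element of $G$ fixing $y'$ is automatically trivial in all torus directions, so the sole obstruction to a toroidal action is the single character on $\cO(E)$, which is exactly what passing to the relative coarsening over $B\GG_m$ eliminates. A prerequisite is the simplicity of $[Y'/G]$, i.e.\ that $G$ permutes no boundary components of $Y'$.
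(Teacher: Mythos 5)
Your proof is correct and follows essentially the same route as the paper: reduce to the local model of a $G$-Galois Kummer covering $Y\to X$, obtain (i) from properness of $[Bl_{I_Y}(Y)/G]\to X$ together with triviality of $I$ over $W$, and obtain (ii) from the chart-by-chart verification that the relative stabilizer $\Ker(G_{y'}\to\GG_m)$ fixes $y$ and the $t_i$, hence acts trivially on the residual coordinates $t_k/y$ and therefore toroidally --- which is precisely the content of the paper's \S\ref{Kummerchart} and Lemma~\ref{torlem}. (The only slip is the parenthetical claim $V(I)\subseteq X\setminus U$, which can fail when $I$ has a nontrivial regular part $(t_1\.t_l)$, but this is harmless since you already describe the triviality locus via the blowing-up convention of \S\ref{blowtorsec}.)
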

\begin{proof}
The claims are local on $X$, so we can assume that $X$ possesses a $G$-Galois Kummer covering $Y$ such that $I_Y$ generates $I|_{Y\ket}$. Then $[Bl_{I_Y}(Y)/G]$ is proper over $X$ and the preimage of $W$ is dense, and hence the same is true for the partial coarse spaces $[Bl_I(X)]$ and $Bl_I(X)$. Furthermore, the constructions are compatible with localizations and $I|_W=1$, hence both are $W$-modifications of $X$.

The fact that $([Bl_I(X)],f^{-1}(U))$ is a toroidal orbifold is shown in Lemma~\ref{torlem} below, using the explicit charts described in Section \ref{Kummerchart}. Its simplicity follows from the observation that $G$ acts simply on $Y$, and hence it also acts simply on $Bl_{I_Y}(Y)$.
\end{proof}

\subsubsection{Charts of Kummer blowings up}\label{Kummerchart}
Next, let us describe explicit charts of Kummer blowings up. Assume that $X=\Spec(A)$ and $I=(t_1\.t_n,m_1^{1/d}\.m_r^{1/d})$ a permissible Kummer ideal, where $(t_1\.t_n)$ defines a toroidal subscheme and $m_i$ are global monomials. Then $X'=[Bl_I(X)]$ is of the form $[Bl_J(Y)/G]_{\cs/B\GG_m}$, where $$A'=A\otimes_{\ZZ[m_1\.m_r]}\ZZ[m_1^{1/d}\.m_r^{1/d}]),$$ $Y=\Spec(A')$, $G=(\ZZ/d\ZZ)^r$, and $J=I\cO_Y$. Note that $Bl_J(Y)$ is covered by the charts $$Y'_y=\Spec(A'[t'_1\.t'_n,u'_1\.u'_r]^\sat),$$ where $y\in\{t_1\.t_n,m_1^{1/d}\.m_r^{1/d}\}$, $t'_i=\frac{t_i}y$ and $u'_j=\frac{m_i^{1/d}}y$. Hence $X'$ is covered by the charts $X'_y=[Y'_y/G]_{\cs/B\GG_m}$.

Let us describe $X'_y$ locally at the image of a point $q\in Y'_y$. The stabilizer $G_q$ is the inertia group of $[Y'_y/G]$ at the image of $q$. Hence the morphism $[Y'_y/G]\to B\GG_m$ induces a homomorphism $G_q\to\GG_m$, whose kernel $G_{q/B\GG_m}$ is the relative stabilizer of $[Y'_y/G]$ over $\GG_m$ at the image of $q$. In particular, $X'_y=[(Y'_y/G_{q/B\GG_m})/(G/G_{q/B\GG_m})]$ locally at the image of $q$. To complete the picture it remains to observe that the relative stabilizer $G_{q/B\GG_m}$ is the subgroup of $G_q$ acting trivially on $y$, that is, $G_q$ acts on $y$ through its image in $\GG_m$. To spell this explicitly consider two cases:

(1) The $t_i$-chart. Since $G$ acts trivially on $t_i$ we have that $G_{q/B\GG_m}=G_q$ and hence $X'_y=Y'_y/G$ is a scheme.


(2) The $m_i^{1/d}$-chart. In this case, $G_{q/B\GG_m}$ contains $(\ZZ/d\ZZ)^{r-1}$ and $G/G_{q/B\GG_m}=\ZZ/e\ZZ$, where $e$ is the minimal divisor of $d$ such that $m_i\in\cM_x^{d/e}$, where $x\in X$ is the image of $q$; in particular, $G_q$ acts through $\ZZ/e\ZZ$ on the image of $m_i^{1/d}$ in $\cM_q$.


\begin{lemma}\label{torlem}
Keep the above notation. Then the group $G_{q/B\GG_m}$ acts toroidally at $q$. In particular, the coarsening $[Y'/G]\to[Bl_I(X)]$ is toroidal and $[Bl_I(X)]=[Y'/G]_{\cs/\GG_m}=[Y'/G]_{\tcs/B\GG_m}$.
\end{lemma}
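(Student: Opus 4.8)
The plan is to argue chart by chart, using the explicit description of $Y'_y$ and of the $G$-action from Section~\ref{Kummerchart}, and to reduce the assertion that $G_{q/B\GG_m}$ acts toroidally at $q$ to its two defining conditions: that $G_{q/B\GG_m}$ act trivially on $\oM_q$ (simplicity), and that $G_{q/B\GG_m}\subseteq G_{\eta(q)}$, i.e.\ that it act trivially on the connected component $C$ of the logarithmic stratum through $q$. Since everything is local around $q$, I may assume $X=\Spec(A)$, $I=(t_1\.t_n,m_1^{1/d}\.m_r^{1/d})$ with the $t_l\in A$, and — after enlarging the toroidal structure of $Y$ so that the $t_l$ become monomials and then removing the extra components again, exactly as in the proof of Lemma~\ref{permislem} — that each chart $Y'_y$ is a monomial chart of a toroidal blowing up, on which $u'_j=m_j^{1/d}/y$ (and, on the $t_i$-chart, $t_i$ itself) are genuine monomials of $(Y'_y,U')$, while the remaining coordinates $t'_l=t_l/y$ are non-monomial regular parameters (these are precisely the strict transforms removed from the toroidal divisor when passing back from the enlarged structure to $U'$).

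The key structural observation is that $G_{q/B\GG_m}$ acts on $\cO_{Y'_y}$ \emph{by characters}: it fixes $A$ and all the $t'_l$, and scales each monomial of $(Y'_y,U')$ by a $d$-th root of unity through the homomorphism $G\to\GG_m^r$. On the $t_i$-chart this holds for all of $G$, since $G$ fixes every $t_l$ and scales $m_j^{1/d}$ by $\chi_j$, and there $G_{q/B\GG_m}=G_q$. On the $m_i^{1/d}$-chart it holds specifically for $G_{q/B\GG_m}=\ker(\chi_i\colon G\to\GG_m)$: an element with $\chi_i=1$ fixes $t'_l=t_l/m_i^{1/d}$ and scales $u'_j=m_j^{1/d}/m_i^{1/d}$ by $\chi_j$. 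It is essential to pass to this kernel rather than to all of $G$: a general element scales the non-monomial parameters $t'_l$ by $\chi_i^{-1}$, and scaling a non-monomial parameter by a nontrivial unit destroys toroidality, exactly as in Example~\ref{nonexam} — this is why the relative coarsening over $B\GG_m$ is the right object.

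Granting this, simplicity is immediate, since a unit character acts trivially on $\oM_q=M_q/M_q^\times$ (equivalently, $G$ acts simply on $Bl_{I_Y}(Y)$, as recorded in the proof of Theorem~\ref{Qblowlem}(ii), and $G_{q/B\GG_m}\subseteq G_q$). For $G_{q/B\GG_m}\subseteq G_{\eta(q)}$ I invoke the local structure of the toroidal scheme $(Y'_y,U')$: near $q$ it admits a chart in which $C$ is cut out by the monomials vanishing at $q$, so $\cO_C$ is generated by the classes of $A$, of the $t'_l$, and of the monomials invertible at $q$. Take $g\in G_{q/B\GG_m}$. As $g\in G_q$ it stabilizes $\m_q$, and since it fixes $A$ and every $t'_l$, stabilizing $\m_q$ forces $\chi_m(g)=1$ for every monomial $m$ with $m(q)\neq0$; otherwise $g(m-m(q))=\chi_m(g)(m-m(q))+(\chi_m(g)-1)m(q)$ would lie outside $\m_q$, using that $d$ is invertible on $X$ so $\chi_m(g)-1$ is a unit unless $\chi_m(g)=1$. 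Hence $g$ acts trivially on $\cO_C$, i.e.\ $g\in G_{\eta(q)}$. Combined with simplicity, this shows $G_{q/B\GG_m}$ acts toroidally at $q$. The main obstacle in the whole argument is precisely this step, since it is here that one must combine the ``characters only'' shape of the action (which, for the $m_i^{1/d}$-chart, is genuinely special to $\ker\chi_i$ and fails for all of $G$) with a sufficiently explicit description of the stratum of $(Y'_y,U')$ surviving the enlargement/contraction of the toroidal structure.

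For the ``in particular'' part: by Section~\ref{Kummerchart} the relative stabilizer of $[Y'/G]$ over $B\GG_m$ at $q$ is $G_{q/B\GG_m}$, so by Lemma~\ref{quotlem} the coarsening $[Y'/G]\to[Y'/G]_{\cs/B\GG_m}=[Bl_I(X)]$ is, locally around $q$, the quotient $Y'_y\to Y'_y/G_{q/B\GG_m}$, which is Kummer logarithmically \'etale onto a toroidal scheme; thus it is a toroidal coarsening and $[Bl_I(X)]$ is a simple toroidal orbifold — which also completes the proof of Theorem~\ref{Qblowlem}(ii). Finally, the relative toroidal coarsening $[Y'/G]_{\tcs/B\GG_m}$ exists by Corollary~\ref{modulicor} and has relative stabilizer the maximal toroidal subgroup of $G_{q/B\GG_m}$, which by the first part is all of $G_{q/B\GG_m}$; it therefore has the same relative geometric stabilizers as $[Y'/G]_{\cs/B\GG_m}$, and the uniqueness of coarsenings (Corollary~\ref{coarsecor}, applied relatively over $B\GG_m$) gives $[Y'/G]_{\tcs/B\GG_m}=[Y'/G]_{\cs/B\GG_m}=[Bl_I(X)]$.
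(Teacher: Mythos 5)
Your proof is correct and follows essentially the same route as the paper: the whole argument turns on the observation that $G_{q/B\GG_m}$ fixes both the $t_l$ (coming from $A$) and the chart variable $y$, hence fixes the regular coordinates $t'_l=t_l/y$, while acting by characters on the monomial part. The paper's proof is just these three sentences; your additional material (the explicit verification of simplicity and of $G_{q/B\GG_m}\subseteq G_{\eta(q)}$ via unit monomials, and the identification of the two relative coarsenings through Corollary~\ref{coarsecor}) fills in details the paper leaves implicit but does not change the approach.
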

\begin{proof}
The regular coordinates on $Y'_y$ are of the form $t'_i=\frac{t_i}{y}$. Since $G_{q/B\GG_m}$ acts trivially on $t_i$ and $y$, it acts trivially on $t'_i$. Thus, its action at $q$ is toroidal.
\end{proof}

We will not need the following remark, so its justification is left to the interested reader.

\begin{remark}
(i) The whole group $G_q$ can act non-trivially on $m_i^{1/d}$-charts, see Example~\ref{orbifoldexam}(ii) below. So, one may wonder what is the maximal toroidal coarsening $[Y'/G]_\tcs$. By the above lemma, we have a natural morphism $f\:[Bl_I(X)]\to[Y'/G]_\tcs$. It turns out that in the non-monomial case (i.e., there exists at least one regular parameter $t_1$), $f$ is an isomorphism. On the other hand, in the monomial case the action of the whole $G_q$ is automatically toroidal, and hence $[Y'/G]_\tcs=Y'/G$. In this case, $f$ can be a non-trivial coarsening, see Example~\ref{orbifoldexam}(i).

(ii) In a first version of the paper, we defined $[Bl_I(X)]$ to be equal to $[Y'/G]_\tcs$. This definition possesses worse functorial properties and often required to distinguish the monomial and non-monomial cases. It seems that the new definition is the ``right'' one.
\end{remark}

\subsubsection{The coarse blowing up}
The coarse blowing up can be computed directly.

\begin{lemma}\label{coarseblow}
Assume given a toroidal affine scheme $X=\Spec(A)$ with a Kummer ideal $I=(t_1\.t_n,m_1^{1/d}\.m_r^{1/d})$ and a positive number $e\in d\ZZ$. Then $Bl_I(X)$ is the normalized blowing up of $X$ along either of the following ideals: $J_e=(t_1^e\.t_n^e,m^{e/d}_1\.m^{e/d}_r)$, $\tilJ_e=I^e\cap\cO_X$.
\end{lemma}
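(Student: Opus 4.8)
The statement asserts that the coarse Kummer blowing up $Bl_I(X)$, defined via the roundabout "blow up a Kummer cover, take a quotient, take relative coarse space" procedure, has a direct description as the ordinary normalized blowing up of $X$ along the honest (non-Kummer) monomial-type ideal $J_e=(t_1^e\.t_n^e,m_1^{e/d}\.m_r^{e/d})$, or equivalently along $\tilJ_e=I^e\cap\cO_X$. The overall strategy is to use the explicit charts from \S\ref{Kummerchart}, compare them with the charts of the ordinary normalized blowing up along $J_e$, and show the two glue to the same scheme. Since everything is local and stable under localization (both constructions commute with open immersions and are $W$-modifications with $W=X\setminus V(I)$), it suffices to work chart-by-chart over the affine $X=\Spec(A)$.

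\medskip

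\textbf{Step 1: Identify $J_e$ and $\tilJ_e$.} First I would check the easy equality $\tilJ_e=J_e$, i.e. that the intersection $I^e\cap\cO_X$ of the Kummer ideal power with the structure sheaf is exactly $(t_1^e\.t_n^e,m_1^{e/d}\.m_r^{e/d})$. Unwinding, $I^e$ is generated by products of $e$ generators of $I=(t_1\.t_n,m_1^{1/d}\.m_r^{1/d})$; a product involving a $t_i$ lies in $\cO_X$ only when multiplied by enough further $t$'s to clear denominators — but the minimal honest elements one gets are $t_i^e$ (and more generally $t^\alpha$ with $|\alpha|=e$) together with monomials $m_1^{a_1/d}\cdots m_r^{a_r/d}$ with $\sum a_j = e$ and each $a_j/d\in\NN$ after saturation, whose ideal is generated by the $m_j^{e/d}$ and their saturation. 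Here one uses $d\mid e$. This is a purely monomial/combinatorial lemma about ideals in the saturated monoid $\oM_X$, essentially the computation in \S\ref{blowupsec} applied to $I^e$; I would dispatch it by the standard correspondence between monomial ideals and ideals of $\oM_X$ and the fact that $\frac1d J\cap\oM_X$ describes root-extraction.

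\medskip

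\textbf{Step 2: Compare charts.} With $X'=[Bl_I(X)]$ described in \S\ref{Kummerchart} as covered by charts $X'_y=[Y'_y/G]_{\cs/B\GG_m}$ for $y\in\{t_1\.t_n,m_1^{1/d}\.m_r^{1/d}\}$, and the coarse space $Bl_I(X)$ covered by $Y'_y/G$, I would match these against the charts of the normalized blowing up $Bl_{J_e}(X)^\nor$. The blowing up along $J_e$ has a $t_i^e$-chart with coordinates $t_j^e/t_i^e=(t_j/t_i)^e$ and $m_k^{e/d}/t_i^e$, and an $m_k^{e/d}$-chart with coordinates $t_j^e/m_k^{e/d}$ and $m_l^{e/d}/m_k^{e/d}$; after normalization these are the saturations $\Spec(A[\ldots]^\sat)$. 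Now $Y'_y=\Spec(A'[t_i/y,\ m_j^{1/d}/y]^\sat)$ where $A'=A\otimes_{\ZZ[m_1\.m_r]}\ZZ[m_1^{1/d}\.m_r^{1/d}]$ and $G=(\ZZ/d\ZZ)^r$; passing to the $G$-invariants (for the coarse space $Y'_y/G$) precisely kills the fractional parts of the $m_j^{1/d}$-exponents, leaving exactly the subring generated by $e$-th powers — matching the chart of $Bl_{J_e}(X)^\nor$ after one normalizes. For the relative coarse space over $B\GG_m$ one has to additionally track that the exceptional divisor stays Cartier, but on the coarse space level this is automatic since normalized blowings up have Cartier exceptional divisor; the relevant content is that $[Bl_I(X)]=[Y'/G]_{\cs/B\GG_m}$ already has Cartier exceptional divisor by construction. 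The invariance in $e$ (any $e\in d\ZZ$ works) follows because blowing up $J_e$ and $J_{e'}$ differ by a monomial Veronese-type operation that is invisible after normalization — concretely $J_{e'}$ is, up to integral closure, a power of $J_e$ when $e\mid e'$, and any two are comparable, so their normalized blowings up coincide.

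\medskip

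\textbf{Main obstacle.} The delicate point is Step 2's bookkeeping: correctly computing the ring of $G$-invariants of the saturated chart algebras $A'[t_i/y, m_j^{1/d}/y]^\sat$ and seeing it is the normalization of the corresponding $J_e$-chart of $A$. The subtlety is that saturation and taking invariants do not obviously commute, and one must verify that normalizing after blowing up $J_e$ produces exactly the saturation that appears on the $Y'_y$ side — i.e. that no extra normalization is needed and none is missing. I expect this is handled by the observation (already implicit in \S\ref{blowupsec} and Lemma~\ref{permislem}) that toroidal/Kummer blowings up are base changes of toric ones, so the whole comparison reduces to an identity of saturated monoids: the fan of $Bl_{J_e}$ over the toric chart $\Spec(\ZZ[M,t_1\.t_n])$ equals the fan describing $Bl_I$ over the same chart, for any $e\in d\ZZ$. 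Once that toric identity is in hand, base-changing to $A$ and to $A'$ and taking $G$-invariants is formal.
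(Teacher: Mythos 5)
Your plan has two genuine gaps. First, the ``easy equality'' $\tilJ_e=J_e$ claimed in Step~1 is false in general, and the lemma does not assert it. Take the monoid $M\subset\ZZ^2$ of Remark~\ref{rootidealrem} (the submonoid of $\NN^2$ cut out by $x+y\in 3\ZZ$), $n=0$, $m_1=(3,0)$, $m_2=(0,3)$, $d=e=3$: then $\tilJ_3=I^3\cap\cO_X$ contains the monomials $(2,1)$ and $(1,2)$, neither of which lies in $J_3=((3,0),(0,3))$. What is true is only that $J_e\subseteq\tilJ_e$ and the two ideals have the same integral closure, so their \emph{normalized} blowings up agree; as written, Step~1 would fail. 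Second, and more seriously, the computation you yourself flag as the ``main obstacle'' --- that the $G$-invariants of the saturated chart algebras $A'[t_i/y,\,m_j^{1/d}/y]^\sat$ are exactly the normalizations of the $J_e$-charts of $A$, with saturation and invariants interacting correctly --- is the entire content of the lemma, and you leave it as an expectation rather than an argument. Until that monoid-theoretic identity is actually proved, this is a plan rather than a proof.

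The paper's proof sidesteps the invariant-ring computation entirely by a normality argument: it suffices to show that $Bl_{I_Y}(Y)$ (with $Y=\Spec(B)$, $B=A[m_1^{1/d}\.m_r^{1/d}]$) is \emph{finite} over $Bl_{J_e}(X)$ and over $Bl_{\tilJ_e}(X)$, for then $Bl_I(X)=Bl_{I_Y}(Y)/(\ZZ/d\ZZ)^r$ is a finite modification of each of the normal schemes $Bl_{J_e}(X)^\nor$ and $Bl_{\tilJ_e}(X)^\nor$, hence isomorphic to both. Finiteness is checked on charts by a one-line integrality observation: if $y$ is a generator of $I$ and $x=y^e$ the corresponding generator of $J_e$, then each generator $g/y$ of $B[I_Y/y]$ satisfies $(g/y)^e=g^e/x\in B[J_eB/x]$, so $B[I_Y/y]$ is integral, hence finite, over $A[J_e/x]$ (and similarly for $\tilJ_e$). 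If you wish to salvage your chart-by-chart route you must actually prove the deferred toric identity; otherwise I recommend replacing Steps~1--2 by this finiteness-plus-normality shortcut.
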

\begin{proof}
Set $Y=Spec(B)$ with $B=A[m_1^{1/d}\.m_r^{1/d}]$. It suffices to check that $Bl_{I_Y}(Y)$ is finite over both $Bl_{J_e}(X)$ and $Bl_{\tilJ_e}(X)$. Indeed, in this case $Bl_I(X)=Bl_{I_Y}(Y)/(\ZZ/d\ZZ)^r$ is a finite modification of both $Bl_{J_e}(X)^\nor$ and $Bl_{\tilJ_e}(X)^\nor$, and since the latter are normal we are done.

We will check the finiteness on charts. Let $y\in\{t_1\.t_n,m^{1/d}_1\.m^{1/d}_r\}$ and $x=y^e$. It suffices to show that $B[I/y]$ is finite over both $A[J_e/x]$ and $A[\tilJ_e/x]$. But this is clear because $B[I/z]$ is integral over both $B[J_eB/z]$ and $B[\tilJ_eB/z]$.
\end{proof}

\subsubsection{Examples}
Let us consider two basic examples of Kummer blowings up.


\begin{example}\label{orbifoldexam}
(i) Let $X=\Spec(k[\pi])$ with the logarithmic structure given by $\pi$, and let $I=(\pi^{1/d})$. Then $Bl_I(X)=[\Spec(k[\pi^{1/d}])/\mu_d]$ has stabilizer $\mu_d$ at the origin.

(ii) Let $X=\Spec(k[t,\pi])$ with the logarithmic structure given by $\pi$, and let $I=(t,\pi^{1/2})$. By Lemma~\ref{coarseblow}, the coarse blow up $X'=Bl_I(X)$ coincides with $Bl_J(X)^\nor$, where $J=(t^2,\pi)$. In fact, $Bl_J(X)$ is already normal and covered by two charts: $X'_1=\Spec(k[t,\pi,\frac{t^2}\pi])$ and $X'_2=\Spec(k[t,\frac{\pi}{t^2}])$. The chart $X'_2$ is regular, but the chart $X'_1$ has an orbifold singularity $O_X$ at the origin. Moreover, the natural logarithmic structure on $X'_1$ is generated by $\pi$ only, and $X'_1$ is not toroidal with this logarithmic structure. (Though $X'_1$ can be made toroidal by increasing the toroidal structure, for example, by adding the divisor $(t)$.)

Now let us consider the finer stack-theoretic picture. The Kummer blowing up $\cX'=[Bl_I(X)]$ can be computed using the Kummer covering $Y=\Spec(k[t,\pi^{1/2}])$ with $G=\ZZ/2\ZZ$. This can be done directly, but for the sake of comparison we will first compute $\cX''=[Y'/G]_\tcs$, where $Y'=Bl_{(t,\pi^{1/2})}(Y)$. Cover $Y'$ by two charts: $Y'_1=\Spec(k[\frac{t}{\pi^{1/2}},\pi^{1/2}])$ and $Y'_2=\Spec(k[t,\frac{\pi^{1/2}}{t}])$, then $\cX''$ is covered by the charts $\cX''_i=[Y'_i/G]_\tcs$. The action of $G$ on $Y'_2$ is toroidal, and hence $\cX''_2=Y'_2/G=X'_2$. The action of $G$ at the origin $O_Y$ of $Y'_1$ is not toroidal because $G$ acts via the non-trivial character on both parameters. Therefore the stabilizer at the image $O_\cX\in\cX''$ of $O_Y$ is $G$. In particular, the coarse moduli space $\cX''\to X'$ is an isomorphism over $X'\setminus\{O_X\}$, and the preimage of $O_X$ is the point $O_\cX$ with a non-trivial stack structure. Furthermore, it is easy to see that the exceptional divisor is Cartier on $\cX''$, and hence the morphism $\cX'\to\cX''$ admits a section. Thus, $\cX'=\cX''$ is the cone orbifold.
\end{example}

\subsubsection{Enlarging the toroidal structure}
As in the proof of Lemma~\ref{permislem}, enlarging the torodial structure any Kummer blowing up can be made a toroidal morphism.

\begin{lemma}\label{enlargetoroidal}
Let $X=(X,U)$ be a toroidal scheme, $I$ a permissible Kummer ideal on $X$ and $f\:X'=[Bl_I(X)]\to X$ the associated Kummer blowing up. Assume that $X_1=(X,U_1)$ is a toroidal scheme obtained by enlarging the toroidal structure so that $I$ is monomial on $X_1$ (see Corollary~\ref{increasecor}). Then $X'_1=(X',f^{-1}(U_1))$ is a toroidal orbifold and the morphism $X'_1\to X_1$ is toroidal.
\end{lemma}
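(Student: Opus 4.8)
The plan is to reduce, by enlarging the toroidal structure, to the case where $I$ is monomial, and then to read both assertions off the explicit chart description of \S\ref{Kummerchart} together with Lemma~\ref{torlem}. All statements are local on $X$, so I may assume $X=\Spec(A)$ is local with closed point $x$ and $I=(t_1\.t_n,m_1^{1/d}\.m_r^{1/d})$, where $t_1\.t_n$ is part of a regular family of parameters of the logarithmic stratum through $x$, the $m_j$ are monomials, and $d$ is invertible. Put $U_1=U\setminus\bigcup_{i=1}^nV(t_i)$; by Lemma~\ref{increaselem} the pair $(X,U_1)$ is a toroidal scheme on which $I$ is a monomial Kummer ideal. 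The first observation is that neither the stack $X'=[Bl_I(X)]$ nor its structure morphism $f$ changes under this enlargement: the construction of \S\ref{Sec:Kummer-blowup} uses only the Kummer covering $Y=X[m_1^{1/d}\.m_r^{1/d}]$, the group $G=(\ZZ/d\ZZ)^r$, the ideal $I_Y=I\cO_Y$, the blowing up $Y'=Bl_{I_Y}(Y)$ and its exceptional divisor, none of which depends on the toroidal structure. Moreover the logarithmic structure that the construction puts on $X'$ when performed over $(X,U_1)$ has triviality locus precisely $f^{-1}(U_1)$: since $V(I)\subseteq V(t_1)\subseteq X\setminus U_1$ we have $U_1\subseteq W:=X\setminus V(I)$, the map $f$ is an isomorphism over $W$, and the non-triviality locus of that structure equals $f^{-1}(X\setminus U_1)$. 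Hence $X'_1=(X',f^{-1}(U_1))$ is exactly the output of the construction applied to $(X,U_1)$, and we may and do assume from now on that $I$ is a monomial Kummer ideal on $(X,U)$ and that $X'=[Bl_I(X)]$ carries the corresponding logarithmic structure; we must show that $X'$ is a toroidal orbifold and that $X'\to X$ is toroidal.

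With $I$ monomial, the action of $G$ on $Y$ is toroidal at every point, because $Y\to Y/G=X$ is Kummer logarithmically \'etale (Lemma~\ref{quotlem}); so $I_Y=I\cO_Y$ is a $G$-invariant monomial ideal on the toroidal scheme $Y$, and $Y'=Bl_{I_Y}(Y)$ is a toroidal scheme with $Y'\to Y$, hence $Y'\to X$, toroidal, by the standard theory of toroidal blowings up (\cite[Section~4]{Niziol}, \cite[Lemma~4.3.3]{AT1}). By \S\ref{Kummerchart}, $X'$ is covered by charts $X'_y=[Y'_y/G]_{\cs/B\GG_m}$, and locally at the image of a point $q\in Y'_y$ one has $X'_y=[(Y'_y/G_{q/B\GG_m})/(G/G_{q/B\GG_m})]$, where the subgroup $G_{q/B\GG_m}$ acts toroidally at $q$ by Lemma~\ref{torlem}. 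By \cite[Theorem~3.3.12]{AT1} the quotient $V:=Y'_y/G_{q/B\GG_m}$ is then a toroidal scheme and $Y'_y\to V$ is Kummer logarithmically \'etale; since $Y'_y\to X$ is toroidal and logarithmic smoothness descends along Kummer logarithmically \'etale covers of the source, $V\to X$ is toroidal. As $V$ is an \'etale atlas of the chart $[V/(G/G_{q/B\GG_m})]$ of $X'$ and logarithmic smoothness is \'etale-local on the source, $X'\to X$ is toroidal. Being logarithmically smooth over the toroidal scheme $X$, $X'$ is then logarithmically regular, and locally it is $[V/(G/G_{q/B\GG_m})]$ with $G/G_{q/B\GG_m}$ a finite diagonalizable group, so its inertia is finite and diagonalizable; thus $X'$ is a toroidal orbifold. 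Translating back to the original notation, $X'_1$ is a toroidal orbifold and $X'_1\to X_1$ is toroidal.

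The main obstacle is the descent statement used above: that logarithmic smoothness of $Y'_y\to X$ descends along the Kummer logarithmically \'etale source-cover $Y'_y\to V=Y'_y/G_{q/B\GG_m}$ — equivalently, that the quotient of a toroidal morphism by a toroidally acting subgroup is again toroidal. This ultimately reduces to a statement about monoids, and it is precisely where the reduction to the monomial case pays off, since \S\ref{Kummerchart} then describes exactly how the exceptional divisor and the relative coarsening interact; Lemma~\ref{torlem} together with the quotient theory of \cite{AT1} supplies the local input. The remaining points — the invariance of $[Bl_I(X)]$ and of $f$ under the enlargement, and the identification of the triviality locus of the constructed logarithmic structure — are routine bookkeeping with the explicit charts.
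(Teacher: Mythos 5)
Your proof follows essentially the same route as the paper's: reduce to the monomial case by enlarging the toroidal structure (noting that the stack $[Bl_I(X)]$ itself is insensitive to this change), use the chart description of \S\ref{Kummerchart} and Lemma~\ref{torlem} to see that the relevant group actions on the blown-up Kummer cover are toroidal, and conclude via the quotient theory of \cite{AT1} that every coarsening of $[Y'/G]$ (in particular the relative coarsening defining $[Bl_I(X)]$) maps toroidally to $X_1$. The step you single out as the ``main obstacle'' --- that logarithmic smoothness of $Y'_y\to X_1$ passes to the quotient $Y'_y/G_{q/B\GG_m}$ --- is precisely the assertion the paper makes directly (``any subgroup $H\subseteq G$ acts toroidally and hence $Y'_1/H\to Y_1$ is toroidal''), justified by the same monoid-chart computation, so your argument is at the same level of rigor as the original.
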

\begin{proof}
The claim is local on $X$, hence we can assume that there exists a $G$-Galois Kummer covering $Y\to X$ such that $J=I\cO_Y$ is a permissible ideal. Let $Y'=Bl_J(Y)$ and let $Y'_1$ and $Y_1$ be the toroidal schemes with the toroidal structure induced from $U_1$. Since $J$ is monomial on $Y_1$, we have that $Y'_1\to Y_1$ is a toroidal blowing up. By \S\ref{Kummerchart} the action of $G$ on $Y'_1$ is toroidal (it acts trivially on all regular coordinates). Therefore, any subgroup $H\subseteq G$ acts toroidally and hence the morphism $Y'_1/H\to Y_1$ is toroidal. It follows that for any coarsening $T$ of $[Y'_1/G]$ the morphism $T\to Y_1/G=X_1$ is toroidal. It remains to recall that, by definition, $X'$ is a coarsening of $[Y'/G]$, namely the relative coarse space with respect to the morphism $[Y'/G]\to B\GG_m$ induced by the exceptional divisor.
\end{proof}

\subsubsection{The universal property}
Kummer blowings up can be characterized by a universal property which extends the classical characterization of blowings up.
\begin{theorem}\label{Th:permissible-Kummer}
Let $X$ be a toroidal scheme and let $I$ be a permissible Kummer ideal with the associated Kummer blowing up $f\:[Bl_I(X)]\to X$. Then $f^{-1}(I)$ is an invertible ideal and $f$ is the universal morphism of toroidal orbifolds $h\:Z\to X$ such that $h^{-1}(I)$ is an invertible ideal.
\end{theorem}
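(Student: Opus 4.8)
The plan is to reduce everything to the local set-up of \S\ref{Sec:Kummer-blowup}. Both assertions are local on $X$, and the uniqueness clause of the universal property will let us glue local constructions and discard dependence on choices, so we may assume $X=\Spec(A)$ carries a $G$-Galois Kummer covering $Y\to X$ over which $I$ is generated by a genuine ideal $I_Y\subseteq\cO_Y$; we may also assume $X$ and, below, $Z$ connected. Put $Y'=Bl_{I_Y}(Y)$ (the normalized blowing up), with exceptional Cartier divisor $E$, so $\cO(-E)=I_Y\cO_{Y'}$, and recall from Lemma~\ref{torlem} that $[Bl_I(X)]=[Y'/G]_{\cs/B\GG_m}=[Y'/G]_{\tcs/B\GG_m}$, that $\pi\:[Y'/G]\to[Bl_I(X)]$ is a toroidal (hence Kummer logarithmically \'etale, surjective) coarsening, and that $\pi$ composed with $[Bl_I(X)]\to B\GG_m$ is the morphism classifying the line bundle $\cO(E)$.

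For the invertibility of $f^{-1}(I)$ I would argue as follows. The ideal $I_Y\cO_{Y'}=\cO(-E)$ is invertible and $G$-equivariant, so it descends to an invertible ideal $I_Y\cO_{[Y'/G]}$ on $[Y'/G]$, and by the defining factorization of the relative coarse space it equals $\pi^*\cM$ for a line bundle $\cM$ on $[Bl_I(X)]$. Since the relative inertia of $\pi$ is \'etale diagonalizable, hence tame, $\pi_*$ is exact and $\cO_{[Bl_I(X)]}\toisom\pi_*\cO_{[Y'/G]}$; the projection formula then shows $\pi_*(I_Y\cO_{[Y'/G]})=\cM$ is an invertible ideal of $\cO_{[Bl_I(X)]}$ whose pullback along $\pi$ is $I_Y\cO_{[Y'/G]}$. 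As $[Y'/G]$ is a Kummer covering of $[Bl_I(X)]$ and $f^{-1}(I)$ is, by definition, the Kummer ideal whose restriction to $[Y'/G]$ is $I_Y\cO_{[Y'/G]}$, the uniqueness of Kummer ideals given their restriction to a covering forces $f^{-1}(I)=\cM$, an invertible ideal.

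For the universal property, given a toroidal orbifold $h\:Z\to X$ with $h^{-1}(I)$ an invertible ideal, I would set $Z_Y:=(Z\times_X Y)^\sat$. The first --- and key --- claim is that, because $h^{-1}(I)$ is an \emph{honest} invertible ideal on $Z$, the Kummer covering $Z_Y\to Z$ straightens into a $G$-torsor, so that $[Z_Y/G]=Z$ (should this be delicate to establish directly, one instead records that $[Z_Y/G]\to Z$ is a coarsening morphism and descends along it via Theorem~\ref{univth}, the relevant relative inertia being killed because $g^*\cO(E)$, computed below, is pulled back from $Z$). One also checks that $Z_Y$ is again a toroidal orbifold, logarithmic regularity being preserved under Kummer logarithmically \'etale coverings of order invertible on the base, and that $I_Y\cO_{Z_Y}$, the pullback of $h^{-1}(I)$, is invertible. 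Since $Z_Y$ is normal, the universal property of the normalized blowing up yields a unique factorization $g\:Z_Y\to Y'$ of $Z_Y\to Y$; by uniqueness $g$ is $G$-equivariant, and it is a morphism of toroidal schemes because $g^{-1}(E)=V(I_Y\cO_{Z_Y})$ is a Cartier divisor disjoint from the triviality locus of $Z_Y$. Passing to quotients one gets $[g]\:Z=[Z_Y/G]\to[Y'/G]$, and $\psi:=\pi\circ[g]\:Z\to[Bl_I(X)]$ is the desired morphism over $X$.

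Uniqueness I would deduce from separatedness and density: over $W=X\setminus V(I)$ the exceptional locus is empty, so $[Bl_I(X)]\times_X W=W$ canonically, while $h^{-1}(W)$ is dense in $Z$ since $h^{-1}(I)$ is invertible, hence nonzero; thus any $X$-morphism $Z\to[Bl_I(X)]$ agrees with $h$ over the dense open $h^{-1}(W)$, and as $[Bl_I(X)]\to X$ is proper, hence separated, two such morphisms agree on a closed substack containing a dense open and therefore agree. This same uniqueness lets the local constructions glue and gives independence of the chosen covering $Y\to X$. The main obstacle is the claim that $Z_Y\to Z$ becomes a $G$-torsor --- equivalently, pinning down the precise relationship between "$h^{-1}(I)$ is an honest invertible ideal on $Z$" and the triviality of the relative-inertia action on $g^*\cO(E)$; this is exactly the point where one must understand how the $B\GG_m$-relative stack structure of $[Bl_I(X)]$ absorbs the roots of the monomials. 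A subsidiary technical point is the persistence of logarithmic regularity under the Kummer base change defining $Z_Y$.
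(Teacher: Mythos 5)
Your overall strategy coincides with the paper's: reduce to a local chart carrying a $G$-Galois Kummer covering $Y\to X$ over which $I$ becomes an ordinary ideal, blow up $I_Y$ upstairs, and use the $B\GG_m$-structure defined by the exceptional divisor to descend. Your invertibility argument and your uniqueness-via-density argument are fine (the paper is terser on both). The problem is your ``first and key claim'': it is \emph{false} that $h^{-1}(I)$ being an honest invertible ideal forces $Z_Y=(Z\times_XY)^\sat\to Z$ to be a $G$-torsor. Take $X=\Spec(k[t,\pi])$ with $I=(t,\pi^{1/2})$ as in Example~\ref{orbifoldexam}(ii), and let $Z$ be the $t$-chart of $[Bl_I(X)]$ itself, which by \S\ref{Kummerchart}(1) is the \emph{scheme} $\Spec(k[t,v])$ with $v=\pi/t^2$ and toroidal structure generated by $t$ and $v$. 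Here $h^{-1}(I)=(t)$ is an honest invertible ideal (since $\pi^{1/2}=t\cdot(\pi^{1/2}/t)$ upstairs), yet $Z_Y=\Spec(k[t,v^{1/2}])$ and $Z_Y\to Z$ is ramified along $v^{1/2}=0$, so $[Z_Y/G]\neq Z$. The point is that $I$ can become an honest invertible ideal on $Z$ because the monomial generators get absorbed into the regular parameter $t$, without the roots $m_j^{1/d}$ ever acquiring actual meaning on $Z$. So your primary route collapses, and you are forced onto the parenthetical fallback.

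Fortunately the fallback is correct and is essentially what the paper does. Since $h^{-1}(I)$ is an invertible ideal of $\cO_Z$ whose pullback to $[Z_Y/G]$ is the exceptional ideal, the line bundle $\cO(E)$ on $[Z_Y/G]$ is pulled back from $Z$; hence the relative inertia $I_{[Z_Y/G]/Z}$ acts trivially on its fibers, i.e.\ its image under $I_{[Z_Y/G]}\to I_{[Y'/G]}$ lies in $\Ker(I_{[Y'/G]}\to\GG_m)=I_{[Y'/G]/B\GG_m}$, which is precisely the relative inertia killed by $[Y'/G]\to[Y'/G]_{\cs/B\GG_m}=[Bl_I(X)]$. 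Thus $I_{[Z_Y/G]/Z}$ dies in $I_{[Bl_I(X)]}$, and Theorem~\ref{univth}(i)(c) descends the composite $[Z_Y/G]\to[Y'/G]\to[Bl_I(X)]$ through the coarsening $[Z_Y/G]\to Z$, with the 2-categorical uniqueness coming from Theorem~\ref{univth}(ii). (The paper packages the same step by applying $(-)_{\cs/B\GG_m}$ to both sides of $[Z_Y/G]\to[Y'/G]$.) With that substitution your argument is complete; the remaining loose thread is the routine verification that $Z_Y$ is toroidal, hence normal, so that the universal property of the \emph{normalized} blowing up applies to $Z_Y\to Y$.
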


\begin{proof}
All claims are local on $X$, so we can use the description of charts from \S\ref{Kummerchart}: choosing a $G$-Galois Kummer covering $Y\to X$, such that $I_Y$ is an ordinary ideal, and setting $Y'=Bl_{I_Y}(Y)$ we have that $[Bl_I(X)]=[Y'/G]_{\cs/B\GG_m}$. Now, the first claim is obtained by unravelling the definition of $X':=[Bl_I(X)]$. Indeed, the exceptional divisor on $Y'$, and hence also on $Y'/G$, is Cartier. Furthermore, the induced morphism $[Y'/G]\to B\GG_m$ factors through $X'$, that is the exceptional divisor on $X'$ is also Cartier.

Now, let us check the universal property. So, assume that $h\:Z\to X$ is such that $h^{-1}(I)$ is an invertible ideal, and let us show that it factors through $[Bl_I(X)]$ uniquely up to a unique 2-isomorphism. Set $T=Z\times_XY$ as an fs logarithmic scheme. From the factorization $T\to Z\to X$, the pullback of $I$ to $T$ is an invertible Kummer ideal. From the factorization $T\to Y\to X$, the pullback of $I$ to $T$ is the usual ideal $I_Y\cO_T$. Therefore $I_Y\cO_T$ is an invertible ideal, and by the universal property of blowings up, $T\to Y$ factors through a morphism $T\stackrel \phi\to Y'=Bl_{I_Y}(Y)$ in a unique way. The exceptional divisors on $T$ and $Y'$ are compatible, hence induce compatible morphisms to $B\GG_m$.

Note that $T\to Z$ is Kummer \'etale with Galois group $G=(\ZZ/d\ZZ)^r$ equal to the Galois group of $Y\to X$. Taking the stack quotient by $G$, the exceptional divisors remain Cartier, hence morphisms $[T/G]\to[Y'/G]\to B\GG_m$ arise. Passing to the relative coarse moduli spaces yields a morphism $[T/G]_{\cs/B\GG_m}\to X'$. It remains to recall that the exceptional divisor on $Z=T/G$ is already Cartier, hence $[T/G]_{\cs/B\GG_m}=Z$ and we obtain the required morphism $Z\to X'$.
\end{proof}

\subsubsection{Strict transforms}
By a classical observation, the universal property of blowings up implies that if $X'\to X$ if the blowing up along an ideal $I$ then the strict transform $Z'$ of a closed subscheme $Z\into X$ is the blowing up of $Z$ along $I\cO_Z$. The same reasoning applies to Kummer blowings up as well.

\begin{lemma}\label{strlem}
Assume that $X$ is a toroidal scheme, $Z\into X$ is a closed toroidal subscheme, and $I\subseteq\cO_X$ is a permissible Kummer ideal whose restriction $J=I\cO_Z$ is a permissible Kummer ideal on $Z$. Let $X'\to X$ be the Kummer blowing up along $I$ and let $Z'$ be the strict transform of $Z$ (i.e., the closure of $Z\setminus V(I)$ in $X'$). Then the morphism $Z'\to Z$ factors through a unique isomorphism $Z'=[Bl_J(Z)]$.
\end{lemma}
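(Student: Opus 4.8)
The plan is to mimic the classical identification of a strict transform as a blowing up: I would use the universal property of Kummer blowings up (Theorem~\ref{Th:permissible-Kummer}) to produce morphisms over $Z$ in both directions between $Z'$ and $W:=[Bl_J(Z)]$, and then observe that these two morphisms are mutually inverse because they are modifications agreeing over a common dense open. Write $f\:X'=[Bl_I(X)]\to X$ for the Kummer blowing up; since everything is local on $X$, we may shrink $X$ at will.

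The key preliminary point — and the one I expect to be the main obstacle — is that the strict transform $Z'$, equipped with the logarithmic structure pulled back from $X'$, is again a toroidal orbifold, and that the ideal $J\cO_{Z'}=I\cO_{Z'}$ is invertible. Invertibility is immediate: by Theorem~\ref{Th:permissible-Kummer} the ideal $f^{-1}(I)=I\cO_{X'}$ is an invertible ideal $\cO_{X'}(-D)$ with $D$ an effective Cartier divisor supported over $V(I)$; as $Z'$ is the scheme-theoretic closure of the reduced open $Z\setminus V(I)$ it is reduced, and no generic point of $Z'$ lies in $D$, so $D\cap Z'$ is Cartier on $Z'$ and $I\cO_{Z'}$ is invertible. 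That $Z'$ is toroidal is a local statement; I would check it in the charts of \S\ref{Kummerchart}: choosing a $G$-Galois Kummer cover $p\:Y\to X$ with $I_Y=I\cO_Y$ an ordinary permissible center, the pullback $Z_Y=Z\times_XY$ is a toroidal subscheme of $Y$, its strict transform in $Bl_{I_Y}(Y)$ is toroidal by the classical fact that strict transforms of toroidal subschemes under permissible blowings up remain toroidal (a local computation, cf.\ \cite{AT1}), and $Z'$ is recovered from it by the same quotient and relative-coarsening construction that defines $[Bl_I(X)]$ (Lemmas~\ref{torlem},~\ref{enlargetoroidal}); hence $Z'$ is a toroidal orbifold.

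Granting this, the morphism $Z'\to Z$ (the restriction of $f$, which lands in $Z$ since $Z$ is closed and contains $Z\setminus V(I)$) is a $(Z\setminus V(J))$-modification with $J\cO_{Z'}$ invertible, so the universal property of $[Bl_J(Z)]$ in Theorem~\ref{Th:permissible-Kummer} yields a unique morphism $\phi\:Z'\to W$ over $Z$. Conversely, by Theorem~\ref{Qblowlem} the stack $W=[Bl_J(Z)]$ is a toroidal orbifold and $W\to Z$ is a $(Z\setminus V(J))$-modification, and by Theorem~\ref{Th:permissible-Kummer} the ideal $J\cO_W$ is invertible; thus $W\to Z\into X$ is a toroidal orbifold over $X$ whose pullback of $I$ equals the invertible ideal $I\cO_W=J\cO_W$, and the universal property of $[Bl_I(X)]$ supplies a morphism $\psi\:W\to X'$ over $X$. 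Since $W\to X$ is proper and $X'\to X$ is separated, $\psi$ is proper, so $\psi(W)$ is closed; over $X\setminus V(I)$, where $f$ is an isomorphism, $\psi$ restricts to the canonical inclusion $Z\setminus V(I)\into X'$, and this open is dense in $W$ (as $W\to Z$ is a modification), so $\psi(W)$ is the closure $Z'$ of $Z\setminus V(I)$. Because $W$ and $Z'$ are reduced, $\psi$ then factors through a morphism $\psi\:W\to Z'$ over $Z$ which is an isomorphism over $Z\setminus V(J)$.

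Finally, $\phi$ and $\psi$ are morphisms over $Z$, both isomorphisms over the dense open $Z\setminus V(J)\subseteq Z$, so $\phi\circ\psi$ and $\psi\circ\phi$ are endomorphisms of $W$ and of $Z'$ over $Z$ restricting to the identity over $Z\setminus V(J)$; since $W$ and $Z'$ are reduced and separated over $Z$, both composites equal the identity, so $\phi$ and $\psi$ are mutually inverse isomorphisms. The same density argument shows the isomorphism $Z'\xrightarrow{\sim}[Bl_J(Z)]$ over $Z$ is unique, which gives the asserted factorization of $Z'\to Z$. As indicated, the only non-formal ingredient is the toroidality of the strict transform $Z'$; the rest is a direct consequence of the universal properties of Kummer blowings up.
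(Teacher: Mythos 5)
Your proof is correct and follows essentially the same route as the paper's: apply the universal property of Theorem~\ref{Th:permissible-Kummer} in both directions to get morphisms $Z'\to[Bl_J(Z)]$ and $[Bl_J(Z)]\to Z'$, and conclude they are mutually inverse via density over $Z\setminus V(J)$ and uniqueness of the factorization. The only difference is that you explicitly verify that the strict transform $Z'$ is a toroidal orbifold and that $I\cO_{Z'}$ is invertible --- hypotheses the universal property needs and which the paper's proof leaves implicit --- so if anything your write-up is the more complete one.
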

\begin{proof}
On the one hand, since $Z'\to X$ factors through $X'$, the ideal $I\cO_{Z'}=J\cO_{Z'}$ is invertible. So, $Z'\to Z$ factors through a morphism $h\:Z'\to Y=[Bl_J(Z)]$ by Theorem~\ref{Th:permissible-Kummer}. On the other hand, $J\cO_Y$ is an invertible ideal, and since $J\cO_Y=I\cO_Y$, we obtain by Theorem~\ref{Th:permissible-Kummer} that the morphism $Y\to X$ factors through $X'$. Furthermore, $Y\to X$ factors through $Z'$ because $Z\setminus V(J)$ is dense in $Y$. This provides a morphism $Y\to Z'$, which is easily seen to be the inverse of $h$ by the uniqueness of the factorization in Theorem~\ref{Th:permissible-Kummer}.
\end{proof}

Since Kummer blowings up were only defined for toroidal orbifolds, we cannot extend the above theorem to the case when $Z$ is an arbitrary closed logarithmic substack of $X$. However, in this case we can at least describe the strict transform on the level of the coarse space.

\begin{lemma}\label{strlem2}
Assume that $X$ is a toroidal scheme, $Z\into X$ is a strict closed logarithmic subscheme, and $I\subseteq\cO_X$ is a permissible Kummer ideal. Let $X'\to X$ be the Kummer blowing up along $I$ and let $Z'\to Z$ be the strict transform. Set $J_n=I^{n!}\cap\cO_X$. Then $Z'_\cs$ is the blowing up of $Z$ along $((J_n)^m)^\nor\cO_Z$ for large enough $n$ and $m$.
\end{lemma}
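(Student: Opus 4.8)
The plan is to reduce to an ordinary blowing up on the coarse space and then invoke the classical description of strict transforms; since the assertion is local on $X$ we may assume that $X$ is affine and that $I$ has the explicit form appearing in Lemma~\ref{coarseblow}. Recall that $Z'$ is the scheme-theoretic closure of $Z\cap W$ in $X'=[Bl_I(X)]$, where $W=X\setminus V(I)$, and that the coarse space of $X'$ is the coarse Kummer blowing up $Bl_I(X)$. The first task is to identify $Z'_\cs$ with the strict transform of $Z$ under the ordinary proper morphism $Bl_I(X)\to X$; the second is to rewrite $Bl_I(X)$ as the blowing up of $X$ along an honest ideal. After that the proof is classical.

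For the first task, one uses that the integer $d$ in the Kummer structure is invertible, so $X'$ is a tame stack and coarse space formation is exact: in particular $Z'_\cs\hookrightarrow Bl_I(X)$ is a closed immersion, and by Theorem~\ref{Qblowlem}(i) it contains $Z\cap W$ as a dense open subscheme. Working \'etale locally on $Bl_I(X)$, write $X'=[\Spec(A)/G]$ with $G$ tame and $Z'=[\Spec(A/\mathfrak a)/G]$, so that $Z'_\cs=\Spec((A/\mathfrak a)^G)$. Since $(A/\mathfrak a)^G$ is a direct summand of the finite $(A/\mathfrak a)^G$-module $A/\mathfrak a$, and the latter has all its associated primes lying over $W$ ($Z'$ being a scheme-theoretic closure of $Z\cap W$), the same holds for $(A/\mathfrak a)^G$. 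Hence $Z'_\cs$ is precisely the scheme-theoretic closure of $Z\cap W$ in $Bl_I(X)$, that is, the strict transform of $Z$.

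For the second task, set $J_n=I^{n!}\cap\cO_X$; this is $\tilJ_{n!}$ in the notation of Lemma~\ref{coarseblow}, so for $n\ge d$ (whence $d\mid n!$) that lemma gives $Bl_I(X)=Bl_{J_n}(X)^\nor$. As $X$ is normal, the integral closure of the Rees algebra $\bigoplus_k J_n^k t^k$ is $\bigoplus_k\overline{J_n^k}t^k$, whose $\Proj$ is $Bl_{J_n}(X)^\nor$; passing to the $m$-th Veronese subalgebra for $m\gg0$ this algebra is generated in degree one by $\overline{J_n^m}=(J_n^m)^\nor$, so $Bl_I(X)=Bl_{(J_n^m)^\nor}(X)$ for $n\ge d$ and $m\gg0$. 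The classical description of strict transforms then shows that the strict transform of $Z$ under the ordinary blowing up $Bl_{(J_n^m)^\nor}(X)\to X$ is the blowing up of $Z$ along $(J_n^m)^\nor\cO_Z$; together with the first task this proves the lemma.

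The main obstacle is the first task. Because $Z$ is only assumed to be a strict closed logarithmic subscheme and not a toroidal one, the Kummer blowing up of $Z$ is undefined and there is no shortcut through Lemma~\ref{strlem}: one must argue directly that forming the coarse space commutes with forming the strict transform, which is exactly where tameness and the absence of spurious embedded components enter. The commutative-algebra input of the second task --- that normalizing a blowing up amounts to blowing up the integral closure of a sufficiently large power --- is standard but relies on the finiteness of integral closures.
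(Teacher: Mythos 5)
Your proof is correct and follows the same top-level strategy as the paper --- reduce to an ordinary blowing up on the coarse space via Lemma~\ref{coarseblow} and then quote the classical theory of strict transforms --- but it handles both intermediate steps differently. For the identification $X'_\cs = Bl_{((J_n)^m)^\nor}(X)$, the paper first enlarges the logarithmic structure (Lemma~\ref{enlargetoroidal}) so that $I$, and hence $J_n$, becomes monomial, and then cites \cite[Corollary~5.3.6]{AT1}; you instead argue directly with the integral closure of the Rees algebra and its Veronese subalgebras. Your route is more self-contained but requires the integral closure of the Rees algebra of the \emph{non-monomial} ideal $J_n$ to be module-finite; the paper's monomialization step makes this automatic by combinatorics, whereas in your argument it must be extracted from the finiteness of the normalized blowing up already implicit in Lemma~\ref{coarseblow} (the paper does not assume excellence, so this is worth saying explicitly rather than leaving as ``standard''). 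You should also confirm that your reading of $((J_n)^m)^\nor$ as the integral closure $\overline{J_n^m}$ matches the convention of \cite{AT1}, since the present paper never defines the notation. Conversely, you are more careful than the paper on the point it leaves implicit, namely that $Z'_\cs$ is the strict transform of $Z$ in $X'_\cs$: the paper merely asserts that $Z'_\cs$ ``coincides with the image of $Z'$,'' and your tameness argument --- exactness of coarse space formation plus the Reynolds-operator observation that $(A/\mathfrak{a})^G$ is a direct summand of $A/\mathfrak{a}$ and therefore acquires no associated primes outside the preimage of $W$ --- is precisely the justification that assertion needs. Net effect: same theorem, same skeleton, with the burden of proof shifted from a citation to commutative algebra in one step and from an unstated claim to an explicit argument in the other.
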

\begin{proof}
The claim is local on $X$, hence by Lemma~\ref{enlargetoroidal} we can enlarge the logarithmic structure on $X$ making $I$ monomial. Recall that by Lemma~\ref{coarseblow}, $X'_\cs\to X$ is the normalized blowing up along $J_n$ for a large enough $n$. Clearly $J_n$ is monomial, hence by \cite[Corollary~5.3.6]{AT1} $X'_\cs\to X$ is the blowing up along $((J_n)^m)^\nor$ for a large enough $m$. Note that $Z'_\cs$ is the closed subscheme of $X'_\cs$ coinciding with the image of $Z'$. It follows that $Z'_\cs$ is the strict transform of $Z$ and hence it is the blowing along $((J_n)^m)^\nor\cO_Z$ by the usual theory of strict transforms.
\end{proof}

\subsubsection{Functoriality}
The universal property can also be used to show that, as most other constructions of this paper, Kummer blowings up are compatible with logarithmically smooth morphisms.

\begin{lemma}\label{kummerfunctor}
Let $f\:Y\to X$ be a logarithmically smooth morphisms of toroidal schemes, $I$ a permissible Kummer center on $X$, and $J=f^{-1}(I)$. Then $[Bl_J(Y)]=[Bl_I(X)]\times_XY$, where the product is taken in the category of fs logarithmic schemes.
\end{lemma}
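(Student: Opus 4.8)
The plan is to derive the lemma from the universal property of Kummer blowings up proved in Theorem~\ref{Th:permissible-Kummer}: I will produce a $Y$-morphism $[Bl_J(Y)]\to[Bl_I(X)]\times_XY$ and a $Y$-morphism in the opposite direction, and then show that the two are mutually inverse using the uniqueness clauses of that universal property.

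Before applying it, I would record two preliminary observations. First, since $f$ is logarithmically smooth, $J=f^{-1}(I)$ is again a permissible Kummer center on $Y$: if $I_x=(t_1\.t_n,m_1^{1/d}\.m_r^{1/d})$ with $t_1\.t_n$ part of a regular family of parameters and $d$ invertible, then (as in the proof of Lemma~\ref{centersfunct}) $f$ induces smooth morphisms of logarithmic strata, so the pullbacks of $t_1\.t_n$ form part of a regular family at any point of $f^{-1}(x)$, while the pullbacks of the Kummer monomials $m_j^{1/d}$ are again Kummer monomials with the same invertible $d$; hence $[Bl_J(Y)]$ is defined and, by Theorem~\ref{Qblowlem}, is a toroidal orbifold on which $J$ generates an invertible ideal. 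Second, the fs logarithmic fibre product $P:=[Bl_I(X)]\times_XY$ is itself a toroidal orbifold: the projection $P\to[Bl_I(X)]$ is logarithmically smooth, being the base change of $f$, so $P$ is logarithmically regular since $[Bl_I(X)]$ is (Theorem~\ref{Qblowlem}), its inertia is pulled back from $[Bl_I(X)]$ and is therefore finite and diagonalizable, and it is Deligne--Mumford for the same reason. Moreover, since $J=f^{-1}(I)$, the ideals $I\cO_P$ and $J\cO_P$ coincide, and they equal the pullback to $P$ of the exceptional Cartier divisor on $[Bl_I(X)]$ provided by Theorem~\ref{Th:permissible-Kummer}, so $J\cO_P$ is an invertible ideal.

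Now the comparison morphisms arise as follows. Since $P\to Y$ is a morphism of toroidal orbifolds on which $J\cO_P$ is invertible, the universal property of $[Bl_J(Y)]$ over $Y$ supplies a unique $Y$-morphism $a\:P\to[Bl_J(Y)]$. Conversely, the composite $[Bl_J(Y)]\to Y\to X$ is a morphism from a toroidal orbifold along which $I$ pulls back to $J\cO_{[Bl_J(Y)]}$, an invertible ideal; so by the universal property of $[Bl_I(X)]$ over $X$ it factors uniquely through $[Bl_I(X)]$, and together with the structure map to $Y$ this gives a $Y$-morphism $b\:[Bl_J(Y)]\to P$. The composite $a\circ b$ is a $Y$-endomorphism of $[Bl_J(Y)]$ along which $J$ still pulls back to an invertible ideal, hence $a\circ b\cong\id$ by the uniqueness in the universal property of $[Bl_J(Y)]$. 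For $b\circ a$, projecting to $[Bl_I(X)]$ one sees that both $b\circ a$ followed by the projection $P\to[Bl_I(X)]$ and the projection $P\to[Bl_I(X)]$ itself are factorisations of the fixed $X$-morphism $P\to X$ through $[Bl_I(X)]$, hence $2$-isomorphic by the uniqueness in the universal property of $[Bl_I(X)]$; since $b\circ a$ is also a $Y$-morphism, the universal property of the $2$-fibre product $P$ forces $b\circ a\cong\id_P$. Thus $a$ and $b$ are mutually inverse, proving $[Bl_J(Y)]\cong[Bl_I(X)]\times_XY$.

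The main obstacle I expect is the second preliminary observation: one must know that $P=[Bl_I(X)]\times_XY$, formed as an fs logarithmic stack, is again logarithmically regular, so that Theorem~\ref{Th:permissible-Kummer} applies to it. This is precisely where the hypothesis that $f$ is \emph{logarithmically smooth} (and not merely flat) is used, via stability of logarithmic regularity under logarithmically smooth morphisms. One should also keep track of the fact that all the ``unique factorisation'' statements are to be read $2$-categorically, but this is routine given the $2$-categorical uniqueness already built into Theorem~\ref{Th:permissible-Kummer} and into the universal property of $2$-fibre products.
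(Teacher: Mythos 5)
Your argument is correct and is essentially the paper's own proof: both directions of the comparison morphism are produced from the universal property of Theorem~\ref{Th:permissible-Kummer} (using $I\cO_{Y'}=J\cO_{Y'}$ for one direction and the invertibility of $I\cO_{X'}$ pulled back along the logarithmically smooth projection for the other), and the two are shown to be mutually inverse by the uniqueness clause. Your added justifications---that $J$ is permissible and that the fs fibre product is again a toroidal orbifold---are details the paper leaves implicit but are handled correctly.
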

\begin{proof}
Recall that $J$ is permissible by Lemma~\ref{functorblowup}. Set $X'=[Bl_I(X)]$ and $Y'=[Bl_J(Y)]$. Since $J\cO_{Y'}=I\cO_{Y'}$, the morphism $Y'\to X$ factors through $X'$ by Theorem~\ref{Th:permissible-Kummer}, and we obtain a morphism $Y'\to X'\times_XY$. Conversely, since $X'\times_XY$ is logarithmically smooth over $X'$, the pullback of the invertible ideal $I\cO_{X'}$ to $X'\times_XY$ is also invertible. The latter coincides with the pullback of $J$ to $X'\times_XY$, and using  Theorem~\ref{Th:permissible-Kummer} again we obtain a morphism $X'\times_XY\to Y'$. It follows from the uniqueness of the factorizations that these two morphisms are inverse, implying the lemma.
\end{proof}

\subsection{Kummer blowings up of toroidal orbifolds}

\subsubsection{Kummer ideals}
The Kummer topology naturally extends to logarithmic stacks, giving rise to the notion of Kummer ideals. Permissibility of Kummer ideals is an \'etale-local notion hence it extends to toroidal orbifolds too. Also, Lemma~\ref{permislem}, which concerns usual coherent ideals, generalizes as follows:
\begin{quote} A permissible blowing up of a toroidal orbifold is again a toroidal orbifold. \end{quote}
To combine the two notions and form the \emph{Kummer} blowing up of a toroidal orbifold we must proceed with caution:
a priori a non-representable operation, such as a Kummer blowing up, does not have to descend (or can lead to a 2-stack as an output).

\subsubsection{Kummer blowings up}
Assume now that $X$ is a toroidal orbifold and $I$ is a permissible Kummer ideal on $X\ket$. Find a strict \'etale covering of $X$ by a toroidal scheme $X_0$ and set $X_1=X_0\times_XX_0$. The pullback $I_i$ of $I$ to $X_i$ is a permissible Kummer ideal, and we set $Y_i=[Bl_{I_i}(X_i)]$. Since $[X_1\toto X_0]$ is an \'etale groupoid whose projections and the multiplication morphism are strict, we obtain by Lemma~\ref{kummerfunctor} that $Y_1\toto Y_0$ is an \'etale groupoid \emph{of stacks} whose projections are strict and \emph{inert}. By Lemma~\ref{inertquot} the quotient $Y=[Y_0/Y_1]$ exists as a toroidal orbifold and satisfies $Y_i=X_i\times_XY$. We call $Y$ the Kummer blowing up of $X$ along $I$ and denote it $[Bl_I(X)]:=Y$. A straightforward verification  using Lemma~\ref{kummerfunctor} shows:
\begin{enumerate}
\item[(i)] The $X$-stack $Y=[Bl_I(X)]$ is independent of the presentation $X=[X_0/X_1]$ and depends only on $X$ and $I$. The uniqueness of $Y$ is understood up to an isomorphism of $X$-stacks, which is unique up to a unique 2-isomorphism. If $X$ is simple then $Y$ is simple.

\item[(ii)] If $f\:X'\to X$ is a logarithmically smooth representable morphism and $I'=f^{-1}(I)$ then $[Bl_{I'}(X')]=[Bl_I(X)]\times_XX'$, the product taken in the fs category.
\end{enumerate}

\appendix

\section{Torification}\label{torapp}

\subsection{The torification functors}

\subsubsection{The general case}
Let $W$ be a toroidal scheme acted on by a diagonalizable group $G$ in a relatively affine way. For example, any action of $G$ on a quasi-affine scheme is relatively affine. The main results of \cite{AT1} establish a so-called torification $\tilcT_{W,G}\:W'\longto W$, which is a $G$-equivariant sequence of blowings up of toroidal schemes such that the action on $W'$ is toroidal, see \cite[Theorems~4.6.5 and 5.4.5]{AT1}.
In addition, it is shown that the torification is compatible with strict strongly $G$-equivariant morphisms $f\:W'\to W$ in the sense that $\tilcT_{W',G'}$ is the {\em contracted pullback} of $\tilcT_{W,G}$, i.e. $f^*(\tilcT_{W,G})$ with all empty blowings up removed.

\subsubsection{Simple actions}
If the action is simple then slightly stronger results are available, see \cite[Theorems~4.6.3 and 5.4.2]{AT1}. In particular, torification is achieved by a single $G$-equivariant blowing up $\cT_{W,G}\:W'\longto W$, and the quotient morphism $\cT^0_{W,G}\:W'\sslash G\to W\sslash G$ has a natural structure of a blowing up.

\subsection{Stronger functoriality}
Using the methods of \cite{AT1} one can easily show that the functors $\tilcT$ and $\cT$ possess stronger functoriality properties than asserted there. Let us discuss this strengthening.

\subsubsection{$\lambda$-equivariance}\label{functorrem}
We start with an aspect that holds for both algorithms. Recall that a $G$-morphism $f\:W'\to W$ is {\em strongly equivariant} if $f$ is the base change of the GIT quotient $f\sslash G$. Some criteria of strong equivariance and related properties can be found in \cite[Theorem~1.3.1 and Lemma~5.6.2]{ATLuna}. In particular, if $f$ is strongly $G$-equivariant then it is strongly $H$-equivariant for any subgroup $H\subseteq G$. More generally, assume that $G'$ acts on $W'$, $G$ acts on $W$, and $f$ is $\lambda$-equivariant for a homomorphism $\lambda\:G'\to G$. We say that $f$ is {\em strongly $\lambda$-equivariant} if it is fix-point reflecting,
i.e. induces an isomorphism $G'_x=G_{f(x)}$ for any $x\in W'$, and strongly $H$-equivariant for any subgroup $H\subseteq G'$ such that $H\cap\Ker(\lambda)=1$.

\begin{theorem}\label{lambdath}
Assume that toroidal schemes $W$ and $W'$ are provided with relatively affine actions of diagonalizable groups $G$ and $G'$, respectively. Further assume that $\lambda\:G'\to G$ is a homomorphism, and a morphism $f\:W'\to W$ is surjective, strict and strongly $\lambda$-equivariant. Then $\tilcT_{W',G'}$ is the contracted pullback of $\tilcT_{W,G}$.
\end{theorem}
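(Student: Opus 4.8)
The strategy is to run the torification algorithm of \cite{AT1} for the pair $(W,G)$ and pull its entire output back along $f$, checking stage by stage that the result is the torification of $(W',G')$. Recall that $\tilcT_{W,G}$ is a canonical sequence of normalized blowings up $W=W_0\leftarrow W_1\leftarrow\cdots\leftarrow W_N$ in which $W_{i+1}$ is the normalized blowing up of $W_i$ along the torific ideal $\cI_{W_i,G}\subseteq\cO_{W_i}$, the process halting precisely once the $G$-action has become toroidal; moreover $\cI_{W_i,G}$ is the unit ideal exactly when the action on $W_i$ is already toroidal. Pulling this sequence back along $f$ produces $W'=W'_0\leftarrow W'_1\leftarrow\cdots\leftarrow W'_N$ with $W'_i:=W_i\times_WW'$, and the assertion is that, after discarding the empty blowings up, this sequence is $\tilcT_{W',G'}$.

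Everything reduces to two claims, applied iteratively. \textbf{Claim A:} whenever $f\colon W'\to W$ is surjective, strict and strongly $\lambda$-equivariant, the torific ideals satisfy $f^{-1}(\cI_{W,G})=\cI_{W',G'}$. \textbf{Claim B:} with the notation above, $W'_1$ is the normalized blowing up of $W'$ along $\cI_{W',G'}$, and the induced morphism $f_1\colon W'_1\to W_1$ is again surjective, strict and strongly $\lambda$-equivariant. Granting these, an induction on $N$ identifies the pulled-back sequence with the algorithm run on $(W',G')$: by Claim A and surjectivity of each $f_i$, $\cI_{W_i,G}$ is the unit ideal if and only if $\cI_{W'_i,G'}$ is, so none of the pulled-back blowings up is empty and the two runs terminate at the same index $N$; and the $G'$-action on $W'_N$ is toroidal because the $G$-action on $W_N$ is and $f_N$ is strict and fix-point reflecting, by Lemma~\ref{functlem0}(ii)(a). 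Hence the pulled-back sequence \emph{is} $\tilcT_{W',G'}$, which --- no blowing up being empty --- coincides with the contracted pullback of $\tilcT_{W,G}$.

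Claim B is routine with the tools already in place: $W'_1=W_1\times_WW'$ is the compatibility of (normalized) blowings up with the base change $f$ as established in \cite{AT1}; strictness of $f_1$ follows from strictness of $f$ together with Claim A, which makes the exceptional divisor of $W'_1$ the pullback of that of $W_1$; and strong $\lambda$-equivariance of $f_1$ is inherited because the centers are equivariant, so the blowings up commute with the GIT quotients (cf. \cite{ATLuna}) and the fix-point reflecting property persists over the exceptional loci by the explicit description of toric blowings up. The crux, and the step I expect to be the main obstacle, is \textbf{Claim A}: one must revisit the construction of the torific ideal in \cite{AT1} and observe that it is defined \'etale-locally over the GIT quotient $W/\!/G$ and depends only on the sharpened logarithmic structure $\oM_W$ and on the stabilizer groups together with their actions on the local toric models. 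A strict morphism has $f^{*}\oM_W\cong\oM_{W'}$, and a surjective strongly $\lambda$-equivariant morphism is, \'etale-locally over the quotients, a base change of $f/\!/G$ and is fix-point reflecting, so both pieces of data are preserved; this forces $f^{-1}(\cI_{W,G})=\cI_{W',G'}$. (Alternatively one may first factor $\lambda$ as $G'\twoheadrightarrow G'/\Ker\lambda\hookrightarrow G$: the fix-point reflecting hypothesis forces $\Ker\lambda$ to act freely on $W'$, so $W'\to W'/\Ker\lambda$ is a torsor, while $W'/\Ker\lambda\to W$ is strict and strongly equivariant for the injection $G'/\Ker\lambda\hookrightarrow G$, reducing the theorem to the torsor case together with the observation that the torification is insensitive to enlarging the group by parts acting freely --- but this merely repackages the same local input.)
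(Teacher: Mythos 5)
Your proposal is correct and follows essentially the same route as the paper: the paper's entire proof is the observation you isolate as Claim A, namely that $\tilcT$ is built from the local combinatorial data $(\oM_x,G_x,\sigma_x)$, which depends only on the stabilizer $G_x$ (preserved since $f$ is fix-point reflecting) and on $\oM_x$ (preserved since $f$ is strict), so a general homomorphism $\lambda$ causes no new difficulty beyond the strongly $G$-equivariant case already treated in \cite{AT1}. The iterative scaffolding you build around this (Claim B and the induction on the blowing-up sequence) is exactly the functoriality mechanism already established in \cite{AT1}, which the paper simply cites rather than re-derives.
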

\begin{proof}
This happens because $\tilcT$ is defined in terms of local combinatorial data $(\oM_x,G_x,\sigma_x)$, see \cite[Section~3.6.8]{AT1}, and the latter only depends on $G_x$ rather than on the entire $G$.
\end{proof}

\subsubsection{Weakening the strictness assumption}
A finer observation is that the strictness assumption is not so essential for the functoriality of $\cT$. For comparison, note that $\tilcT$ is constructed using barycentric subdivisions which depend on the monoids $\oM_x$, hence it is not functorial with respect to non-strict morphisms.

\begin{theorem}\label{nostrictth}
Assume that toroidal schemes $W$ and $W'$ are provided with relatively affine and simple actions of diagonalizable groups $G$ and $G'$, respectively, $\lambda\:G'\to G$ is a homomorphism, and $f\:W'\to W$ is a surjective strongly $\lambda$-equivariant morphism. Further assume that for any point $x'\in W'$ with $x=f(x')$ the restriction $f_S\:S'\to S$ of $f$ to the logarithmic strata through $x'$ and $x$ is strongly $\lambda$-equivariant. Then $\cT_{W',G'}$ and $\cT^0_{W',G'}$ are the pullbacks of $\cT_{W,G}$ and $\cT^0_{W,G}$, respectively.
\end{theorem}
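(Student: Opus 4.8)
The plan is to reduce the assertion to a single equality of torific ideals and then to deduce that equality from the strict functoriality already established in Theorem~\ref{lambdath}, applied on logarithmic strata. Recall from \cite[Theorems~4.6.3 and 5.4.2]{AT1}, reviewed in \S\ref{torapp}, that for a simple action $\cT_{W,G}$ is the blowing up of $W$ along the torific ideal $\cI^{\tor}_{W,G}$, the exceptional divisor being adjoined to the boundary, while $\cT^0_{W,G}$ is the blowing up of $W\sslash G$ along the ideal $\cJ_{W,G}$ obtained from $\cI^{\tor}_{W,G}$ by descent. Since the blowing up of an ideal commutes with arbitrary base change (and carries the boundary to the boundary together with the exceptional divisor), and since $f$ --- being strongly $\lambda$-equivariant --- presents $W'$ as $W\times_{W\sslash G}(W'\sslash G')$, so that invariant ideals pull back along $f$ (cf.\ \cite[\S5]{ATLuna}), the theorem follows once we establish
\begin{equation}\label{torificpb}
\cI^{\tor}_{W,G}\cdot\cO_{W'}=\cI^{\tor}_{W',G'}.
\end{equation}

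The input that makes \eqref{torificpb} tractable is a structural feature of the torific ideal of a \emph{simple} action, which I would isolate first from the construction in \cite{AT1}: \'etale-locally near a point $x$, with $S$ the logarithmic stratum through $x$ and $p_W\:W\to S$ a local retraction of $W$ onto $S$ (so that $W$ is \'etale-locally the product of $S$ with the affine toric scheme $\Spec(\ZZ[\oM_x])$), one has $\cI^{\tor}_{W,G}=p_W^{-1}(\cI^{\tor}_{S,G_x})$, where $\cI^{\tor}_{S,G_x}$ depends only on the regular scheme $S$ equipped with the induced $G_x$-action and not on the ambient monoid $\oM_x$. In other words, for a simple action the torific ideal is transverse: it is carved out entirely by the action on the stratum. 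It is precisely the dependence of $\tilcT$ on $\oM_x$ (through barycentric subdivisions) that obstructs its functoriality under non-strict morphisms, and that dependence is absent for $\cT$ in the simple case.

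Granting this, I would verify \eqref{torificpb} \'etale-locally at a point $x'\in W'$ with $x=f(x')$. Fix-point reflectivity of $f$ identifies $G'_{x'}$ with $G_x$ via $\lambda$. The hypothesis that the restriction $f_S\:S'\to S$ of $f$ to the logarithmic strata through $x'$ and $x$ is strongly $\lambda$-equivariant has two uses. First, $f_S$ is a strongly $\lambda$-equivariant morphism of regular schemes carrying the trivial logarithmic structure, hence strict, so Theorem~\ref{lambdath} applies to it --- its proof, resting on the fact that the combinatorial datum of a simple action depends only on the stabilizer, goes through verbatim for $\cT$ in place of $\tilcT$ --- and yields $\cI^{\tor}_{S,G_x}\cdot\cO_{S'}=\cI^{\tor}_{S',G'_{x'}}$. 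Second, combined with strong $\lambda$-equivariance of $f$ it forces the retractions to be compatible near $x'$, i.e.\ $p_W\circ f=f_S\circ p_{W'}$; pulling the preceding identity back along $p_{W'}$ and applying the structural description of the second paragraph to both $W$ and $W'$ then gives \eqref{torificpb}. Finally, the statement for $\cT^0$ follows by passing to GIT quotients and using strong $\lambda$-equivariance of $f$ once more, descent of ideals being compatible with the base change $W'\sslash G'\to W\sslash G$.

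The main obstacle is the structural reduction of the second paragraph: one must verify, from the construction of the torific ideal of a simple action in \cite{AT1}, that it is genuinely transverse --- pulled back from the logarithmic strata and independent of the ambient monoids --- since it is exactly this feature, and not merely its dependence on $G_x$ as exploited in the strict case, that is preserved by a non-strict morphism whose restriction to the strata is strongly equivariant. Everything else --- commutation of blowing up with base change and with GIT quotients, the behaviour of boundaries, compatibility of the retractions, and the application of Theorem~\ref{lambdath} on the strata --- is then routine.
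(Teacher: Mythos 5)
Your reduction of the theorem to the single equality $\cI^{\tor}_{W,G}\cdot\cO_{W'}=\cI^{\tor}_{W',G'}$ is reasonable, but the ``structural feature'' on which the whole verification rests --- that for a simple action the torific ideal is pulled back from the logarithmic stratum via a local retraction and is independent of the ambient monoid --- is false, and you correctly identify it as the main obstacle without being able to discharge it. The torific ideal of \cite{AT1} is a product of ideals $\cI_\chi$ generated by \emph{all} semi-invariants of character $\chi$, the semi-invariant \emph{monomials} included, not only the semi-invariant parameters along the stratum; this mixing of the two kinds of directions is precisely what makes the blowing up torify the action. A minimal counterexample to your claim: take $W=\Spec(k[x,m])$ with toroidal divisor $V(m)$ and $G=\mu_2$ acting by $x\mapsto -x$, $m\mapsto -m$. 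The action is simple, the stratum through the origin is $S=V(m)=\Spec(k[x])$, and the reduced signature there is the single non-trivial character $\chi$, so $\cI_\chi=(x,m)$; the pullback of its analogue on $S$ under the retraction is only $(x)$, which is principal, whose blowing up is the identity and manifestly does not torify the (non-toroidal) action at the origin. So the transverse description fails and with it your verification of the ideal equality.

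The paper's argument is of a different and much more economical nature: it does not re-derive the ideal equality at all, but inspects the proof of functoriality in \cite[Theorem~4.6.3]{AT1} and observes that strictness of $f$ enters in exactly one place, namely through \cite[Lemma~4.2.13(ii)]{AT1}, whose sole role is to guarantee that $f$ respects the reduced signatures, $f^*(\sigma_x)=\sigma_{x'}$. Under the present hypotheses that equality is supplied instead by the strong $G_x$-equivariance of $f_S$ together with \cite[Lemma~3.6.4]{AT1}, after which the rest of the argument of \cite{AT1} --- including the compatibility of the monomial contributions to $\cI_\chi$, which your proposal tries to suppress --- goes through unchanged. If you want to salvage your route, the statement to prove is not that the torific ideal is carved out on the stratum, but that for each $\chi$ in the (matching) signatures the ideal generated by the $\chi$-semi-invariants is compatible with $f$; it is there, and not in any transversality of the center, that the hypotheses on $f$ beyond the stratum condition actually do their work. (Separately, ``blowing up commutes with arbitrary base change'' is not true as stated; what you need, and what the theorem asserts, is only that the centers pull back to one another.)
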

\begin{proof}
Note that a reference to \cite[Lemma~4.2.13(ii)]{AT1} is the only place in the proof of \cite[Theorems~4.6.3]{AT1}, where one uses the assumption that $f$ is strict. The lemma asserts that $f$ respects the reduced signatures: $f^*(\sigma_x)=\sigma_{x'}$. Recall that the latter are defined as the multisets of non-trivial characters through which $G_x$ acts on the cotangent spaces to $S$ and $S'$ at $x$ and $x'$, respectively. But we assume that $f_S$ is strongly $G_x$-equivariant, hence $f^*(\sigma_x)=\sigma_{x'}$ by \cite[Lemma~3.6.4]{AT1}, and we avoid the use of \cite[Lemma~4.2.13(ii)]{AT1}.
\end{proof}

\subsubsection{Logarithmically smooth morphisms}
Here is the main particular case of the above theorem that we will need.

\begin{corollary}\label{nostrictcor}
Assume that toroidal schemes $W$ and $W'$ are provided with relatively affine and simple actions of finite diagonalizable groups $G$ and $G'$, respectively, $\lambda\:G'\to G$ is a homomorphism, and $f\:W'\to W$ is a surjective, logarithmically smooth, fix-point reflecting, $\lambda$-equivariant morphism. Then $\cT_{W',G'}$ and $\cT^0_{W',G'}$ are the pullbacks of $\cT_{W,G}$ and $\cT^0_{W,G}$, respectively.
\end{corollary}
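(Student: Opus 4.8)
The plan is to derive Corollary~\ref{nostrictcor} from Theorem~\ref{nostrictth}. Surjectivity being part of the hypotheses, what remains is to check the two strong-equivariance conditions of that theorem: that $f$ itself is strongly $\lambda$-equivariant, and that for every $x'\in W'$ with $x=f(x')$ the induced morphism $f_S\:S'\to S$ between the connected components of the logarithmic strata through $x'$ and $x$ is strongly $\lambda$-equivariant.

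First I would collect the structural input. By Remark~\ref{functrem}(i), $f$ respects the fans and is injective on monoids; and, exactly as in the proof of Lemma~\ref{centersfunct}, $f$ induces smooth morphisms on the logarithmic strata, so each $f_S$ is a smooth morphism of regular schemes. Each $f_S$ also inherits surjectivity, $\lambda$-equivariance and fix-point reflection from $f$: a stabilizer at a point of a stratum is a subgroup of the ambient stabilizer, and the isomorphisms $G'_{x'}\toisom G_{x}$ restrict accordingly. So everything reduces to one claim: a surjective, logarithmically smooth (or smooth), fix-point reflecting, $\lambda$-equivariant morphism of schemes carrying relatively affine actions of diagonalizable groups is strongly $\lambda$-equivariant.

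To prove this claim I would unwind the definition from \S\ref{functorrem}: fix-point reflection is given, so it suffices to fix a subgroup $H\subseteq G'$ with $H\cap\Ker(\lambda)=1$, identify $H$ with $\lambda(H)\subseteq G$, and show that the now $H$-equivariant, fix-point reflecting, logarithmically smooth morphism $f$ is strongly $H$-equivariant, i.e.\ equals the base change of $f\sslash H$. Here I would invoke the criteria of \cite[Theorem~1.3.1 and Lemma~5.6.2]{ATLuna}, using that $H$, being diagonalizable, is linearly reductive: this lets one choose a Kato chart for $f$ $H$-equivariantly, so that locally $f$ factors as a strict smooth $H$-morphism followed by a base change of a toric morphism; fix-point reflection then forces each factor to be strongly $H$-equivariant rather than merely equivariant, hence so is $f$. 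Applying the claim to $f$ and to all the $f_S$, Theorem~\ref{nostrictth} gives the corollary.

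The main obstacle is precisely this claim — that logarithmic smoothness together with fix-point reflection upgrades to strong $\lambda$-equivariance. One has to produce the Kato chart compatibly with the group action and then track stabilizers carefully through the two-step factorization; this is exactly where linear reductivity of diagonalizable groups is used, and the fact that strong equivariance can fail without logarithmic smoothness is what dictates the hypotheses of the corollary. The remaining steps are formal, resting on Theorem~\ref{nostrictth} and the already-established compatibility of toroidal structures with logarithmically smooth morphisms.
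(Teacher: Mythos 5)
Your overall strategy agrees with the paper's: reduce to Theorem~\ref{nostrictth}, observe (as in Lemma~\ref{centersfunct}) that a logarithmically smooth morphism induces smooth morphisms between logarithmic strata, and then verify that $f$ and each $f_S$ are strongly $\lambda$-equivariant. The reduction and the bookkeeping (surjectivity, fix-point reflection, $\lambda$-equivariance passing to strata) are fine.

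The gap is in the one step you yourself identify as the main obstacle: the claim that a smooth, fix-point reflecting, $\lambda$-equivariant morphism is automatically strongly $\lambda$-equivariant. Your proposed mechanism --- choose a Kato chart $H$-equivariantly using linear reductivity, factor $f$ locally as a strict smooth $H$-morphism followed by a base change of a toric morphism, and then assert that ``fix-point reflection forces each factor to be strongly $H$-equivariant'' --- leaves the crucial implication unproved: fix-point reflection is a pointwise condition on stabilizers and does not by itself imply that a morphism is pulled back from the GIT quotient (strong equivariance is a global condition on orbits, not just on stabilizers). The ingredient you are missing is \emph{inertness} of $f_S$, and this is exactly where the hypothesis that $G$ and $G'$ are \emph{finite} --- which appears in the corollary but not in Theorem~\ref{nostrictth}, and which your argument never uses --- enters: for finite groups all orbits are special, so a fix-point reflecting equivariant morphism is inert (\cite[\S5.1.8 and \S5.5.3]{ATLuna}), and a smooth, inert, fix-point reflecting morphism is strongly equivariant (indeed strongly smooth) by \cite[Theorem~1.1.3(ii)]{ATLuna}. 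Without finiteness the claim as you state it is not expected to hold, so the argument needs to be rerouted through inertness rather than through an equivariant chart construction.
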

\begin{proof}
It suffices to prove that the induced morphisms $f_S\:S'\to S$ between the logarithmic strata are strongly equivariant. Since $f_S$ is logarithmically smooth, $f_S$ is smooth. Clearly, $f_S$ is fix-point reflecting. Since the groups are finite, all orbits are special and hence $f_S$ is inert (\cite[\S5.1.8 and \S5.5.3]{ATLuna}). Thus, $f_S$ is strongly equivariant (even strongly smooth) by \cite[Theorem~1.1.3(ii)]{ATLuna}.
\end{proof}

\bibliographystyle{amsalpha}
\bibliography{principalization}

\providecommand{\bysame}{\leavevmode\hbox to3em{\hrulefill}\thinspace}
\providecommand{\MR}{\relax\ifhmode\unskip\space\fi MR }
\providecommand{\MRhref}[2]{%
  \href{http://www.ams.org/mathscinet-getitem?mr=#1}{#2}
}
\providecommand{\href}[2]{#2}
\begin{thebibliography}{KKMSD73}

\bibitem[AOV11]{AOV2}
Dan Abramovich, Martin Olsson, and Angelo Vistoli, \emph{Twisted stable maps to
  tame {A}rtin stacks}, J. Algebraic Geom. \textbf{20} (2011), no.~3, 399--477.
  \MR{2786662}

\bibitem[AT]{ATLuna}
Dan Abramovich and Michael Temkin, \emph{{Luna's Fundamental Lemma for
  diagonalizable groups}}, Algebraic geometry, to appear,
  \url{http://arxiv.org/abs/1505.00754}.

\bibitem[AT17]{AT1}
Dan Abramovich and Michael Temkin, \emph{Torification of diagonalizable group
  actions on toroidal schemes}, J. Algebra \textbf{472} (2017), 279--338.
  \MR{3584880}

\bibitem[ATW16]{ATW-principalization}
Dan Abramovich, Michael Temkin, and Jaros{\l}aw W{\l}odarczyk,
  \emph{Principalization of ideals on logarithmic orbifolds}, 2016, in
  preparation, preliminary version is available at
  www.huji..ac.il/$\sim$temkin/papers/LogPrincipalization.pdf.

\bibitem[AV02]{Abramovich-Vistoli}
Dan Abramovich and Angelo Vistoli, \emph{Compactifying the space of stable
  maps}, J. Amer. Math. Soc. \textbf{15} (2002), no.~1, 27--75 (electronic).
  \MR{1862797}

\bibitem[Ber17]{Bergh}
Daniel Bergh, \emph{Functorial destackification of tame stacks with abelian
  stabilisers}, Compositio Mathematica \textbf{153} (2017), no.~6, 1257–1315.

\bibitem[GR13]{Gabber-Ramero}
Ofer {Gabber} and Lorenzo {Ramero}, \emph{{Foundations for almost ring
  theory}}, November 2013, \url{math.univ-lille1.fr/~ramero/hodge.pdf}.

\bibitem[{Har}17]{Harper}
A.~{Harper}, \emph{{Factorization for stacks and boundary complexes}}, ArXiv
  e-prints (2017), \url{https://arxiv.org/abs/1706.07999}.

\bibitem[Kat94]{Kato-toric}
Kazuya Kato, \emph{Toric singularities}, Amer. J. Math. \textbf{116} (1994),
  no.~5, 1073--1099. \MR{1296725 (95g:14056)}

\bibitem[KKMSD73]{KKMS}
George Kempf, Finn~Faye Knudsen, David Mumford, and Bernard Saint-Donat,
  \emph{Toroidal embeddings. {I}}, Lecture Notes in Mathematics, Vol. 339,
  Springer-Verlag, Berlin, 1973. \MR{0335518 (49 \#299)}

\bibitem[KM97]{Keel-Mori}
Se{\'a}n Keel and Shigefumi Mori, \emph{Quotients by groupoids}, Ann. of Math.
  (2) \textbf{145} (1997), no.~1, 193--213. \MR{1432041}

\bibitem[Niz06]{Niziol}
Wies{\l}awa Nizio{\l}, \emph{Toric singularities: log-blow-ups and global
  resolutions}, J. Algebraic Geom. \textbf{15} (2006), no.~1, 1--29.
  \MR{2177194 (2006i:14015)}

\bibitem[Niz08]{Niziol-K-theory-of-log-schemes-I}
\bysame, \emph{{$K$}-theory of log-schemes. {I}}, Doc. Math. \textbf{13}
  (2008), 505--551. \MR{2452875}

\bibitem[Ols03]{Olsson-logarithmic}
Martin~C. Olsson, \emph{Logarithmic geometry and algebraic stacks}, Ann. Sci.
  \'Ecole Norm. Sup. (4) \textbf{36} (2003), no.~5, 747--791. \MR{2032986}

\bibitem[{Sta}]{stacks}
The {Stacks Project Authors}, \emph{{\itshape Stacks Project}},
  \url{http://stacks.math.columbia.edu}.

\bibitem[TV14]{Talpo-Vistoli}
Mattia {Talpo} and Angelo {Vistoli}, \emph{{Infinite root stacks and
  quasi-coherent sheaves on logarithmic schemes}}, ArXiv e-prints (2014),
  \url{https://arxiv.org/abs/1410.1164}.

\end{thebibliography}

\end{document}